\numberwithin{equation}{section}
\begin{document}

\theoremstyle{definition}
\newtheorem{theorem}{Theorem}[section]
\newtheorem{definition}[theorem]{Definition}
\newtheorem{proposition}[theorem]{Proposition}
\newtheorem{lemma}[theorem]{Lemma}
\newtheorem{corollary}[theorem]{Corollary}
\newtheorem{conjecture}[theorem]{Conjecture}
\newtheorem{example}[theorem]{Example}
\newtheorem{remark}[theorem]{Remark}

\ytableausetup{centertableaux, mathmode, boxsize=1.5em}


\title{Multiple Hook Removing Game Whose Starting Position is a Rectangular Young Diagram with the Unimodal Numbering}
\author{Tomoaki Abuku\footnote{National Institute of Informatics, 2-1-2 Hitotsubashi, Chiyoda-ku, Tokyo 101-8430, Japan \newline e-mail: buku3416@gmail.com}\qquad Masato Tada\footnote{University of Tsukuba, 1-1-1 Tennodai, Tsukuba, Ibaraki 305-8571, Japan \newline e-mail: t-d-masato@math.tsukuba.ac.jp}}
\date{2021}

\maketitle


\begin{abstract}
We introduce a new impartial game, named Multiple Hook Removing Game (MHRG for short).
We also determine the $\mathcal{G}$-values of some game positions (including the starting positions) in MHRG$(m,n)$, the MHRG whose starting position is the rectangular Young diagram of size $m\times n$ with the unimodal numbering.
In addition, we prove that MHRG$(m,n)$ is isomorphic, as games, to MHRG$(m,n+1)$ (if $m\le n$ and $m+n$ is even), and give a relationship between MHRG$(n,n+1)$ (and MHRG$(n,n)$) and HRG$(S_n)$, the Hook Removing Game in terms of shifted Young diagrams.
\end{abstract}


\section{Introduction}
Hook Removing Game (HRG for short) is an impartial game whose game positions are Young diagrams.
HRG is played by two players, and each players alternately remove one hook from the Young diagram.
It was invented in 1970 by Mikio Sato (see, \cite{Sato1} and \cite{Sato2}), who also found a formula for the $\mathcal{G}$-values.
On the other hand, as is well-known, Young diagrams are used also in combinatorial representation theory.
For example, there exists one-to-one correspondence between the (isomorphism classes of) irreducible representation for the symmetric group $S_n$ and the Young diagrams having $n$ boxes.
Under this correspondence, the dimension of an irreducible representation for $S_n$ is equal to the number of standard Young tableaux of the corresponding shape.
Moreover, Irie \cite{Irie} gave an interesting relation between the $\mathcal{G}$-values in HRG and the representation theory of $S_n$.
In \cite{Proctor}, Proctor introduced the notion of a $d$-complete poset, which generalizes the Young diagrams from the viewpoint of the combinatorial theory, and is used for an expression of an element in a quotient of the Weyl group for a simply-laced Kac-Moody Lie algebra (including the finite-dimensional simple Lie algebra of type A,D, and E).
In particular, $d$-complete posets of ``shape" (resp., ``shifted shape") type correspond to Young diagrams (resp., shifted Young diagrams), and give an expression of an element in the quotient of the Weyl group of type A (resp., type D).  
In 2001, Kawanaka \cite{Kawanaka} introduced the notion of Plain Game, which generalizes HRG;
$d$-complete posets are used instead of Young diagrams.
He also gave a closed formula for the $\mathcal{G}$-values in Plain Game.

In this paper, we introduce a new impartial game, named Multiple Hook Removing Game (MHRG for short).
The rules of MHRG (which will be explained in \S 1.1 below) are motivated by Tada's work \cite{Tada}, in which he described the elements in a minimal parabolic quotient of the Weyl group of type B,C,F,G in terms of $d$-complete posets with a ``coloring" (or ``numbering");
this ``coloring" naturally arises from the fact that the Dynkin diagrams of type B,C,F,G are obtained from the ones of type A,D,E by ``folding".
In particular, Young diagrams contained in a rectangular Young diagrams with the ``unimodal numbering" (see Example \ref{ex:MHRG} below) correspond to elements in the Weyl group of type B,C.

\begin{table}[htbp]
\centering
\begin{tabular}{|c|c|c|} \hline
    Game &Diagram &\begin{tabular}{c}Corresponding\\ Weyl group\end{tabular} \\ \hline\hline
    \begin{tabular}{c}Sato-Welter Game\end{tabular}
    &\begin{tabular}{c}Young diagram\end{tabular}
    & type A \\ \hline
    \begin{tabular}{c}Turning Turtles\end{tabular} 
    &\begin{tabular}{c}shifted Young diagram\end{tabular}
    & type D\\ \hline
    \begin{tabular}{c}Plain Game \end{tabular}
    & \begin{tabular}{c}$d$-complete poset\end{tabular}
    & type A,D,E\\ \hline
    \begin{tabular}{c}{\bf Multiple Hook}\\{\bf Removing Game}\end{tabular} 
    & \begin{tabular}{c}{\bf Young diagram with the}\\{\bf unimodal numbering}\end{tabular} 
    & {\bf type B,C}\\ \hline
\end{tabular}
\end{table}
\noindent
In this paper, as explained below, we will prove some results on $\mathcal{G}$-values in MHRG with the ``unimodal numbering".


\subsection{Rules of Multiple Hook Removing Game}
In this subsection, we explain the rules of MHRG (in a general setting).

\begin{definition}\label{MHRG}
The rules of MHRG are as follows (see also Sections 2.3 and 2.4):
\begin{enumerate}
    \item[(M1)] The game is played by two players.
    \item[(M2)] The two players alternately make a move.
    \item[(M3)] The starting position is a Young diagram $Y_s$ with a numbering $\alpha:Y_s\to\mathbb{N}$.
    All game positions are Young diagrams $Y$ contained in $Y_s$ with a numbering $\alpha\vert_Y$.
    \item[(M4)] Given a Young diagram $Y$ with the numbering $\alpha\vert_Y$, the player chooses a box in $Y$, and remove the hook $h$ corresponding to the box. 
    Let $\mathcal{A}(h)$ be the multiset of the numbers (in boxes) in the hook $h$, and let $Y'$ be the Young diagram obtained by removing $h$ from $Y$, with the numbering $\alpha\vert_{Y'}$.
    \begin{enumerate}
        \item[(M4a)] If there does not exist any box in $Y'$ whose corresponding hook $h'$ satisfies $\mathcal{A}(h') = \mathcal{A}(h)$ as multisets, then the player's turn is over, and the next player is given $Y'$.
        \item[(M4b)] If there exists a box in $Y'$ whose corresponding hook $h'$ satisfies $\mathcal{A}(h') = \mathcal{A}(h)$ as multisets, then the player must choose one of such boxes, and remove the hook $h'$ corresponding to the box. 
        Let $Y''$ be the Young diagram obtained by removing $h'$ from $Y'$, with the numbering $\alpha\vert_{Y''}$.
        \item[(M4c)] Do the same operation as (M4a) and (M4b), with $Y'$ replaced by $Y''$.
        As long as such a box exists, repeat this operation.
    \end{enumerate}
    \item[(M5)] The player who makes the empty Young diagram $\emptyset$ wins.
\end{enumerate}
\end{definition}

In this paper, we introduce a special numbering (which we call the unimodal numbering) for the boxes in Young diagrams; see Section 3.1 below.
Then, we mainly treat MHRG$(m,n)$ for $m,n\in \mathbb{N}$ which is MHRG whose starting position $Y_s$ is the rectangular Young diagram $Y_{m,n}$ of size $m\times n$ with the unimodal numbering $\alpha_{m,n}$.

\begin{example}\label{ex:MHRG}
At the beginning of MHRG$(3,5)$ played by A and B, the following Young diagram $Y = Y_{3,5}$ with the numbering $\alpha_{3,5}$ is given to the player, say A, having the first move, as the starting position. 
\begin{center}
$Y=$
\begin{ytableau}
    3&4&3&2&1\\
    2&3&4&3&2\\
    1&2&3&4&3
\end{ytableau}
\end{center}
If the player A removes the hook $h$ corresponding to the box $(2,4)$ from $Y$, then A obtains $Y'$ (with $\alpha_{3,5}\vert_{Y'}$) below:
\begin{center}
$Y=$
\begin{ytableau}
    3&4&3&2&1\\
    2&3&4&*(gray)3&*(gray)2\\
    1&2&3&*(gray)4&3
\end{ytableau}
\qquad$\xrightarrow{}$\qquad
$Y'=$
\begin{ytableau}
    3&4&3&2&1\\
    2&3&4&3\\
    1&2&3
\end{ytableau}
\end{center}
Note that $\mathcal{A}(h)=\{2,3,4\}$.
Since there does not exist a box in $Y'$ whose corresponding hook $h'$ satisfies $\mathcal{A}(h')=\mathcal{A}(h)=\{2,3,4\}$, the player A's turn is over.
If the player B removes the hook $h'$ corresponding to the box $(2,1)$ from $Y'$, then B obtains $Y''$ (with $\alpha_{3,5}\vert_{Y''}$) below:
\begin{center}
$Y'=$
\begin{ytableau}
    3&4&3&2&1\\
    *(gray)2&*(gray)3&*(gray)4&*(gray)3\\
    *(gray)1&2&3
\end{ytableau}
\qquad$\xrightarrow{}$\qquad 
$Y''=$
\begin{ytableau}
    3&4&3&2&1\\
    2&3
\end{ytableau}
\end{center}
Note that $\mathcal{A}(h')=\{3,4,3,2,1\}$.
Notice that the box $(1,2)$ in $Y''$ is a unique box in $Y''$ whose corresponding hook $h''$ satisfies $\mathcal{A}(h'')=\mathcal{A}(h')=\{3,4,3,2,1\}$.
Because of (M4b), B must remove the hook $h''$ from $Y''$, and obtains $Y'''$ (with $\alpha_{3,5}\vert_{Y'''}$) below:

\begin{center}
$Y''=$
\begin{ytableau}
    3&*(gray)4&*(gray)3&*(gray)2&*(gray)1\\
    2&*(gray)3
\end{ytableau}
\qquad$\xrightarrow{}$\qquad
$Y'''=$
\begin{ytableau}
    3\\
    2
\end{ytableau}
\end{center}
If the player A removes the hook $h'''$ corresponding to the box $(1,1)$ from $Y'''$, then A obtains empty Young diagram $\emptyset$:
\begin{center}
$Y'''=$
\begin{ytableau}
    *(gray)3\\
    *(gray)2
\end{ytableau}
\qquad$\xrightarrow{}$\qquad
$\emptyset$
\end{center}
In this case, the winner is the player A.
Remark that $Y''$ above does not appear in the game tree of MHRG$(3,5)$; 
in general, not all Young diagrams contained in $Y_{m,n}$ is a position of MHRG$(m,n)$ (see Remark \ref{re:t_not=_f}).
After this paper, Motegi \cite{Motegi} gave a characterization of the set of all game positions in MHRG$(m,n)$.
\end{example}


\subsection{Main Results}

By using computer, we obtained the $\mathcal{G}$-value of the starting position in MHRG$(m,n)$ for $1 \le m,n \le 9$ as Table \ref{table:starting}.

\begin{table}[htbp]
\centering
\begin{tabular}{|c||c|c|c|c|c|c|c|c|c|} \hline
    $m$\textbackslash $n$&1&2&3&4&5&6&7&8&9\\ \hline\hline
    1&1&1&3&3&5&5&7&7&9 \\ \hline
    2&1&3&3&1&1&1&1&1&1\\ \hline
    3&3&3&0&0&0&0&3&3&10 \\ \hline
    4&3&1&0&4&4&2&2&5&5 \\ \hline
    5&5&1&0&4&1&1&14&14&18 \\ \hline
    6&5&1&0&2&1&7&7&0&0 \\ \hline
    7&7&1&3&2&14&7&0&0&10\\\hline
    8&7&1&3&5&14&0&0&8&8\\ \hline
    9&9&1&10&5&18&0&10&8&1 \\ \hline
\end{tabular}
\caption{$\mathcal{G}$-value of the starting position $Y_{m,n}$ in MHRG$(m,n)$ for $1 \le m,n \le 9$.}\label{table:starting}
\end{table}

Motivated by this table, we made conjectures on the $\mathcal{G}$-value of the starting position $Y_{m,n}$ of MHRG$(m,n)$ for some $m,n$ as follows:
\begin{enumerate}
    \item [(1)] If $m\leq n$ and $m+n$ is even, then the $\mathcal{G}$-value of the starting position in MHRG$(m,n)$ is equal to the $\mathcal{G}$-value of the starting position in MHRG$(m,n+1)$.
    \item [(2)] The sequence $\{\mathcal{G}(Y_{1,n})\}_{n\ge 1}$ of the $\mathcal{G}$-values of the starting positions in MHRG$(1,n)$ for $n\ge 1$ is arithmetric periodic.
    \item [(3)] The sequence $\{\mathcal{G}(Y_{2,n})\}_{n\ge 2}$ of the $\mathcal{G}$-values of the starting positions in MHRG$(2,n)$ for $n\ge 2$ is periodic.
    \item [(4)] The $\mathcal{G}$-value of the starting position in MHRG$(n,n)$ and MHRG$(n,n+1)$ is equal to $\bigoplus_{1\leq k\leq n}^{} k$, where $\bigoplus_i^{}{a_i}$ denotes the nim-sum (the addition of numbers in binary form without carry) of all $a_i$'s.
\end{enumerate}

In fact, we prove the following theorems, which implies that our conjectures above are true.
Let $\mathcal{F}(Y_{m,n})$ denote the set of all Young diagrams contained in $Y_{m,n}$, and $\mathcal{T}(Y_{m,n})$ be the subset of $\mathcal{F}(Y_{m,n})$ consisting of all $Y\in\mathcal{F}(Y_{m,n})$ such that there exists a transition from $Y_{m,n}$ to $Y$.

\begin{theorem}[= Theorem \ref{th:game_cor}]\label{th:mnI}
Let $m,n\in\mathbb{N}$ be such that $m\le n$ and $m+n$ is even.
Then the map $E$ (see Definition \ref{def:index_cor}) gives an isomorphism from MHRG$(m,n)$ to MHRG$(m,n+1)$.
Therefore, for each $Y\in \mathcal{T}(Y_{m,n})$, it holds that $\mathcal{G}(Y) = \mathcal{G}(E(Y))$. 
In particular, $\mathcal{G}(Y_{m,n})$ in MHRG$(m,n)$ is equal to $\mathcal{G}(Y_{m,n+1})$ in MHRG$(m,n+1)$.
\end{theorem}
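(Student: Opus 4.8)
The plan is to reduce the whole statement to the single assertion that $E$ is an isomorphism of games, and then read off the $\mathcal{G}$-value identities for free. Recall that an isomorphism of impartial games is a bijection between their position sets that carries legal moves to legal moves in both directions. Both MHRG$(m,n)$ and MHRG$(m,n+1)$ are finite and acyclic, since every turn strictly decreases the number of boxes, so the Sprague--Grundy value of a position is the mex of the values of its options. Hence a game isomorphism automatically preserves $\mathcal{G}$-values, by induction on the number of boxes: matching the options of $Y$ bijectively with those of $E(Y)$ makes the two mex-computations identical. The final ``in particular'' is then immediate from $E(Y_{m,n})=Y_{m,n+1}$. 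Thus everything comes down to showing that the map $E$ of Definition~\ref{def:index_cor} is a game isomorphism from MHRG$(m,n)$ to MHRG$(m,n+1)$.

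First I would establish that $E$ restricts to a bijection $\mathcal{T}(Y_{m,n})\to\mathcal{T}(Y_{m,n+1})$ of position sets. Using the explicit description of $E$, one checks that it is an injective, containment-preserving map $\mathcal{F}(Y_{m,n})\to\mathcal{F}(Y_{m,n+1})$ with $E(Y_{m,n})=Y_{m,n+1}$, and then that it carries $\mathcal{T}(Y_{m,n})$ onto $\mathcal{T}(Y_{m,n+1})$. Because reachability is phrased through the turn relation, this identification of positions cannot really be separated from the move analysis below; I would therefore handle positions and moves together, by a single induction on the number of boxes, propagating the statement ``$Y\in\mathcal{T}(Y_{m,n})$ iff $E(Y)\in\mathcal{T}(Y_{m,n+1})$, and the turns out of $Y$ correspond to the turns out of $E(Y)$'' down the game tree.

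The heart of the proof is the correspondence of turns, and here the essential point is the shape of the unimodal numbering. Writing $i$ for the row index from the top and $j$ for the column index, the numbering $\alpha_{m,n}$ illustrated in Example~\ref{ex:MHRG} is the tent function $\alpha_{m,n}(i,j)=\tfrac{m+n}{2}-\bigl|(j-i)-\tfrac{n-m}{2}\bigr|$ of the diagonal coordinate $j-i$, whose unique peak (here we use $m+n$ even) sits on the central diagonal $j-i=\tfrac{n-m}{2}$. Consequently the multiset $\mathcal{A}(h)$ attached to a hook $h$ is completely determined by its arm length, its leg length, and the position of its corner relative to this central diagonal. In $\alpha_{m,n+1}$ the sum $m+(n+1)$ is odd, so the peak value $\tfrac{m+n}{2}$ is now attained on the two adjacent diagonals $j-i=\tfrac{n-m}{2}$ and $j-i=\tfrac{n-m}{2}+1$, and $E$ is precisely the operation that reproduces, on diagrams, this splitting of the central diagonal into two. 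The key lemma to prove is then that $E$ induces a bijection between the hooks of $Y$ and the hooks of $E(Y)$ under which the relation $\mathcal{A}(h_1)=\mathcal{A}(h_2)$ is preserved in both directions: a hook that does not meet the central diagonal keeps its multiset (the left region is fixed and the right region is shifted by one column, and both shifts preserve values), whereas a hook that straddles it has its multiset modified by one controlled duplication of the peak value $\tfrac{m+n}{2}$, uniformly over all such hooks. Granting this lemma, the forced-chain rule (M4a)--(M4c) is transported verbatim, so a maximal forced chain realizing a turn $Y\to Z$ in MHRG$(m,n)$ matches a maximal forced chain realizing a turn $E(Y)\to E(Z)$ in MHRG$(m,n+1)$, and conversely; combined with the position bijection this shows $E$ is a game isomorphism.

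The main obstacle is exactly this key lemma, i.e.\ the bookkeeping for hooks whose corner lies on or beyond the central diagonal. One must verify that $E$ sends removable hooks to removable hooks, that the single duplication of the peak value is applied consistently, and --- most delicately --- that two hooks with equal multisets before applying $E$ still have equal multisets afterwards and vice versa, so that no spurious forced move is created or destroyed. The unimodality (each arm and each leg is monotone on each side of the peak) is what makes the multiset of a hook depend only on these few numerical parameters, and I expect the verification to reduce to a finite case analysis according to whether the arm, the leg, or both straddle the central diagonal.
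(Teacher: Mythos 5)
Your reduction of the theorem to the single claim that $E$ is a game isomorphism, and your treatment of the forward direction (a turn of $Y$ yields a turn of $E(Y)$, via the behaviour of the unimodal numbering on the three types of diagonal intervals), match the paper's Lemmas \ref{lemma:hook_cor} and \ref{lemma:step_cor} in substance. The gap is in the converse direction, which is where all the real work in the paper lies. First, your ``key lemma'' cannot hold as stated: $E(Y)$ has strictly more boxes than $Y$ (by $d_c(Y)$, the length of the duplicated central diagonal), so there is no bijection between the hooks of $Y$ and the hooks of $E(Y)$. The hooks of $E(Y)$ whose diagonal support begins exactly at the new central diagonal $c+1$ (where $c=(n-m)/2$) are not images of hooks of $Y$ under your correspondence, and these are precisely the dangerous moves: removing such a hook changes $d_{c+1}$ without changing $d_c$, and the resulting diagram is not of the form $E(Y')$ for any $Y'$. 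To show that $E$ is surjective onto $\mathcal{T}(Y_{m,n+1})$ and that $\mathcal{O}(E(Y))\subseteq E(\mathcal{O}(Y))$, one must prove that every position reachable in MHRG$(m,n+1)$ satisfies $d_c=d_{c+1}$ (Lemma \ref{lemma:same_center}), i.e.\ that such an unbalancing hook removal is never a complete legal turn: either it forces a compensating second removal, or it is impossible. This is not a consequence of local hook bookkeeping, and your proposed one-step induction down the game tree does not reach it: whether the mirror interval $[n+1-m-r,c]$ of a removed interval $[c+1,r]$ is again removable depends on the shape of the diagram on diagonals far from the removed hook, so knowing only that the parent position is balanced does not rule out an unbalanced child. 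The paper's proof of Lemma \ref{lemma:same_center} takes a counterexample maximal in box count, examines \emph{two} consecutive predecessor moves, and runs a substantial case analysis on the bulge pattern at the mirror diagonals; some analogue of this global argument is unavoidable and is missing from your plan.

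A smaller inaccuracy: your claim that a maximal forced chain realizing $Y\to Z$ matches a forced chain of the same length realizing $E(Y)\to E(Z)$ is false. In the paper's proof of Theorem \ref{th:game_cor}, case \eqref{eq:b} with $l=c+1$, a two-hook turn of MHRG$(m,n+1)$ (interval $[c+1,r]$ followed by its mirror $[n+1-m-r,c]$) corresponds to a \emph{single}-hook turn of MHRG$(m,n)$ along the self-mirrored interval $[n-m-r_0,r_0]$. The end positions correspond, which is all the isomorphism needs, but the chain lengths differ, so an argument that transports the forced-chain structure ``verbatim'' would break here.
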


\begin{theorem}[= Theorem \ref{th:1n}]\label{th:1nI}
Let $m=1$, and $n\in \mathbb{N}$.
In MHRG$(1,n)$, 
\begin{center}
$\mathcal{T}(Y_{1,n})=\left \{ 
\begin{array}{cc}
    \mathcal{F}(Y_{1,n})  & \text{if $n$ is odd},\\ 
    \mathcal{F}(Y_{1,n})\setminus\{(\frac{n}{2})\} & \text{if $n$ is even}.
\end{array}
\right.$
\end{center}
Moreover, for $0\leq l\leq n$ such that $Y_{1,l}\in \mathcal{T}(Y_{1,n})$,
\begin{center}
$\mathcal{G}((l))=\left \{ 
\begin{array}{ccc}
    l  & \text{if $n$ is odd},\\ 
    l & \text{if $n$ is even and $l<n/2$},\\
    l-1 & \text{if $n$ is even and $n/2<l$}.
\end{array}
\right.$
\end{center}
In particular,
\begin{center}
$\mathcal{G}((n))=\left \{ 
\begin{array}{cc}
    n  & \text{if $n$ is odd},\\ 
    n-1 & \text{if $n$ is even}.
\end{array}
\right.$
\end{center}
\end{theorem}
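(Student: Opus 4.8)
The plan is to reduce everything to a careful analysis of single-row positions. First I would record the explicit form of the unimodal numbering on $Y_{1,n}$: writing $a_j$ for the number in the $j$-th box, one has $a_j=\min(j,n+1-j)$, so the row reads $1,2,\dots$ up to its peak and symmetrically back down, and in particular it is a palindrome, $a_j=a_{n+1-j}$. Since a hook in a one-row diagram is simply a suffix of boxes, an initial move from the position $(l)$ amounts to choosing a box $j\in\{1,\dots,l\}$, removing the suffix $[j,l]$, and arriving at $(j-1)$; the multiset attached to the move is $S=\{a_j,\dots,a_l\}$.

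Second, the crux is a \emph{Matching Lemma} describing when rule (M4b) forces a further removal: I claim that in $(l)$ the suffix $[j,l]$ has a matching suffix $[k,j-1]$ (same multiset) in the residual row $(j-1)$ if and only if $n$ is even and $j=\tfrac{n}{2}+1$ (equivalently $l\ge \tfrac{n}{2}+1$), and that in this case the matching box is unique. The tool is the palindromic symmetry: the reflection $i\mapsto n+1-i$ carries the multiset of any interval onto that of its mirror image, so the mirror of $[j,l]$ is $[\,n+1-l,\;n+1-j\,]$, and this mirror is a suffix of $[1,j-1]$ precisely when its right end $n+1-j$ equals $j-1$, i.e. $j=\tfrac{n+2}{2}$, which forces $n$ even. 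The main obstacle, and the part requiring the most care, is to rule out \emph{coincidental} matches — to show that no interval other than the reflection can share the multiset of $[j,l]$ — which I would establish by comparing the multiplicity functions value by value, using that each value $v$ below the peak occurs at exactly the two positions $v$ and $n+1-v$; this pins an interval's multiset down to the unordered pair of mirror-classes of its endpoints.

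Third, granting the Matching Lemma I would trace the forced chain at the special box. For $n=2t$ even and $l>t$, removing the suffix at $j=t+1$ lands on $(t)$, whose numbering is the strictly increasing $1,\dots,t$; the unique matching suffix of $(t)$ is $[\,2t+1-l,\;t\,]$, whose removal yields $(2t-l)$, and since $\max S=t$ while every suffix of $(2t-l)$ has maximum $2t-l<t$, the chain stops there. Hence the compound move at the special box sends $(l)\mapsto(2t-l)$. Combining this with the match-free ordinary moves $j\ne t+1$ (each sending $(l)\mapsto(j-1)$), the set of final positions reachable from $(l)$ is: all $(r)$ with $0\le r\le l-1$ when $n$ is odd; all such $(r)$ with $r\ne t$ when $n=2t$ and $l>t$; and, because then no special box is present, all $(r)$ with $0\le r\le l-1$ when $l\le t$.

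Finally, these option sets yield both conclusions by a short induction on $l$ via the Sprague--Grundy (mex) rule. For reachability, the key point for $n=2t$ is that $(t)$ is \emph{never} terminal: the only way to produce $(t)$ is to remove the suffix at box $t+1$, but by the Matching Lemma this always forces a further removal, so $(t)\notin\mathcal{T}(Y_{1,n})$, while every other $(r)$ is exhibited in $\mathcal{T}(Y_{1,n})$ directly from the option sets; for odd $n$ every position is reached. For the $\mathcal{G}$-values, when $n$ is odd (or $n$ even and $l<t$) the options realize exactly the Grundy values $\{0,1,\dots,l-1\}$, so $\mathcal{G}((l))=l$; when $n=2t$ and $l>t$ the options realize $\{0,\dots,t-1\}$ (from $r<t$) together with $\{t,\dots,l-2\}$ (from $r\in\{t+1,\dots,l-1\}$, whose values are $r-1$ by the inductive hypothesis), i.e. exactly $\{0,1,\dots,l-2\}$, so $\mathcal{G}((l))=l-1$. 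Specializing to $l=n$ gives the asserted value of $\mathcal{G}((n))$.
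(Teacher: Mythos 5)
Your proposal is correct, but it takes a genuinely different route from the paper on the even case. The paper disposes of even $n$ in one line by invoking its Theorem \ref{th:game_cor}: since $1+n$ is even for odd $n$, the map $E$ gives an isomorphism MHRG$(1,n)\cong\text{MHRG}(1,n+1)$, and under $E$ the row $(l)$ goes to $(l)$ or $(l+1)$ according as $l\le(n-1)/2$ or $l\ge(n+1)/2$, which yields both the description of $\mathcal{T}(Y_{1,n+1})$ and the shifted $\mathcal{G}$-values for free; only the odd case is then analyzed directly, and there the paper's argument coincides with yours (the numbering $1,\dots,k,\dots$ forces that no second hook removal ever occurs, so each $(l)$ is a Nim heap of size $l$). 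You instead treat the even case by hand, via your Matching Lemma and the forced chain $(l)\mapsto(t)\mapsto(2t-l)$ at the box $t+1$, concluding that $\mathcal{O}((l))=\{(r)\mid 0\le r\le l-1,\ r\ne t\}$ for $l>t$ and that $(t)$ is never an option of anything. I checked the details: the only point you leave as a sketch is ruling out ``coincidental'' matches, but in this one-row setting that reduces to comparing a suffix of $(j-1)$ against the removed multiset, and the peak value $\hat\alpha_{1,n}$ (or the minimum of the removed multiset) separates the cases cleanly, exactly as you indicate. The trade-off is clear: your argument is self-contained and elementary, avoiding the heavy machinery behind Theorem \ref{th:game_cor} (in particular Lemma \ref{lemma:same_center}), whereas the paper's reduction is shorter given that machinery and illustrates how the isomorphism $E$ is meant to be used. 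Both are valid proofs of the statement.
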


\begin{theorem}[see Lemma \ref{lemma:2n_transition}, Theorem \ref{th:2n}, and Corollary \ref{cor:2n+1}]\label{th:2nI}
Let $m=1$, and $n\ge 2$. 
If $n$ is even, then
\[
\mathcal{T}(Y_{2,n}) = \mathcal{F}(Y_{2,n})\setminus\{(\lambda'_1,\lambda'_2) \in \mathcal{F}(Y_{2,n})\mid\lambda'_1+\lambda'_2 = n\}.
\]
Moreover, assume that $n$ is even.
For $Y = (\lambda_1,\lambda_2) \in \mathcal{T}(Y_{2,n})$, if $\lambda_1 + \lambda_2 < n$, then $\mathcal{G}(Y)$ is equal to the $\mathcal{G}$-value of the game position corresponding to $Y$ in the Sato-Welter game, and the list of those $Y = (\lambda_1,\lambda_2) \in \mathcal{F}(Y_{2,n})$ with $\lambda_1 + \lambda_2 > n$ whose $\mathcal{G}$-values are $0$,$1$ or $2$ is given by Table \ref{table:012u} (in Section 4.4).
In particular, for $n\ge 2$, the $\mathcal{G}$-value of the starting position $Y_{2,n}$ is as follows: 
\begin{equation*}
\mathcal{G}(Y_{2,n}) = \left\{
\begin{aligned}
3\ &(n = 2,3),\\
2\ &(n = 2+8m, 3+8m\ (m\in\mathbb{N})),\\
1\ &(\text{otherwise}).
\end{aligned}\right.
\end{equation*}
\end{theorem}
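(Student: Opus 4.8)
The plan is to prove the even case $n = 2n'$ in detail and to deduce the odd case from it: since $2 \le n$ and $2 + n$ is even exactly when $n$ is even, Theorem~\ref{th:mnI} gives an isomorphism MHRG$(2,n) \cong$ MHRG$(2,n+1)$, so every statement for even $n$ transports to $n+1$, which is what Corollary~\ref{cor:2n+1} records. Assume then that $n$ is even. First I would unwind the unimodal numbering of Section~3.1 on $Y_{2,n}$ and extract its one crucial feature: the number in the box $(i,j)$ depends only on the content $c = j - i$, and as a function of $c$ it is strictly unimodal and symmetric about the central content $c^{\ast} = n/2 - 1$. A short check shows that in the two-row case every hook occupies a \emph{contiguous} interval of contents, so a hook $h$ is determined, for the purpose of (M4), by an interval $I \subseteq \{-1, 0, \dots, n-1\}$, and $\mathcal{A}(h)$ is simply the multiset of values of the unimodal function on $I$. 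Because that function is strictly unimodal and symmetric, two hooks yield the same multiset essentially only when their intervals are exchanged by the reflection $\sigma \colon c \mapsto 2c^{\ast} - c = (n-2) - c$. This reflection principle is the engine behind the whole theorem, and stating it cleanly is the first step.

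Next I would prove the description of $\mathcal{T}(Y_{2,n})$ in Lemma~\ref{lemma:2n_transition}. The box count $\lambda_1 + \lambda_2$ strictly drops at every removal, so the three regions $\lambda_1 + \lambda_2 < n$, $= n$, $> n$ are crossed only downward; the real assertion is that the anti-diagonal $\lambda_1 + \lambda_2 = n$ is never a resting position. The key computation is that a single removal lands on the anti-diagonal exactly when the removed interval $I$ and its reflection $\sigma(I)$ assemble into a hook of the resulting diagram; by the reflection principle this produces a matching hook, so rule (M4b) forces a continuation that drops the position strictly below the anti-diagonal. Hence no completed move can stop on $\lambda_1 + \lambda_2 = n$. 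For the reverse inclusion I would exhibit, for each diagram off the anti-diagonal, a chain-free removal from $(n,n)$ or from an already reached position realizing it, so that every such diagram is a genuine game position.

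With the transition structure in place I would split the $\mathcal{G}$-value computation by region. Below the anti-diagonal the reflection $\sigma(I)$ of the interval $I$ of any removable hook is never realized as a hook of the resulting diagram -- this is where the region condition $\lambda_1 + \lambda_2 < n$ and the evenness of $n$ enter -- so by the reflection principle no chain is ever triggered there. Hence on the down-set $\{\lambda_1 + \lambda_2 < n\}$ an MHRG move is an ordinary single hook removal, i.e.\ a Welter move, and the game restricted to this region is literally the Sato-Welter game; Sato's formula then gives $\mathcal{G}(Y)$ equal to the Sato-Welter value of $Y$. Above the anti-diagonal chains do occur, so a move is a Welter removal followed by the forced reflection-driven continuations; here I would compute the $\mathcal{G}$-values directly and organize the positions with $\lambda_1 + \lambda_2 > n$ by their small values, verifying the list of those equal to $0$, $1$, or $2$ recorded in Table~\ref{table:012u}. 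This is the content of Theorem~\ref{th:2n}.

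Finally, to read off $\mathcal{G}(Y_{2,n})$ I would take the mex over the options of the starting position $(n,n)$. By the transition lemma no option lies on the anti-diagonal, so each option is either a below-anti-diagonal position, whose value is a Sato-Welter value, or an above-anti-diagonal position located by Table~\ref{table:012u}; it therefore suffices to decide which of the values $0, 1, 2$ actually occur among the options, which pins the mex to $1$, $2$, or $3$. I expect this last step to be the main obstacle: one must control the chains above the anti-diagonal well enough to determine the relevant small $\mathcal{G}$-values, and then track how the Sato-Welter values of the below-anti-diagonal options vary with $n$. The resulting period-$8$ behaviour -- value $2$ for $n \equiv 2, 3 \pmod 8$ with $n \ge 10$, value $1$ for the other $n$, and the exceptional value $3$ at $n = 2, 3$ -- reflects the binary nim-structure of the Sato-Welter values, and pinning it down will require a careful, essentially modular case analysis rather than one slick argument.
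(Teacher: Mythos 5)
Your plan follows essentially the same route as the paper: reduce to even $n$ via Theorem \ref{th:mnI}, use the content-reflection $c\mapsto n-2-c$ (which is the $m=2$ specialization of Lemma \ref{lemma:same_number_hook}) to show that a forced second removal occurs exactly when a single removal would land on the anti-diagonal $\lambda_1+\lambda_2=n$ (Lemma \ref{lemma:2n_step}), deduce the description of $\mathcal{T}(Y_{2,n})$ and of the option sets (Lemma \ref{lemma:2n_transition}), identify the region $\lambda_1+\lambda_2<n$ with the Sato--Welter game, and finish with the case-by-case verification of Table \ref{table:012u} and the mex at the starting position. The remaining work you defer (the modular case analysis above the anti-diagonal and the final mex) is exactly where the paper's proof of Theorem \ref{th:2n} spends its effort, so the proposal is sound and matches the paper's argument.
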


\begin{theorem}[see Theorem \ref{th:nn+1_shifted} and Corollary \ref{cor:nn_nn+1}]\label{th:nnI}
For $n\in\mathbb{N}$, MHRG$(n,n+1)$ (and MHRG$(n,n)$) is isomorphic to HRG$(S_n)$; where HRG$(S_n)$ is HRG whose starting position is the shifted Young diagram $S_n = (n,n-1,\ldots,1)$.
In particular, $\mathcal{G}(Y_{n,n})$ in MHRG$(n,n)$ and $\mathcal{G}(Y_{n,n+1})$ in MHRG$(n,n+1)$ is equal to $\mathcal{G}(S_n)=\bigoplus_{1\leq k\leq n}^{} k$ in HRG$(S_n)$.
\end{theorem}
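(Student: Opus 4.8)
The plan is to prove the two isomorphisms separately, treating MHRG$(n,n+1)\cong$HRG$(S_n)$ as the substantive claim and reducing the square case to it. For the reduction, observe that $m=n$ satisfies $m\le n$ and $m+n=2n$ is even, so Theorem~\ref{th:mnI} applies verbatim: the map $E$ of Definition~\ref{def:index_cor} is an isomorphism MHRG$(n,n)\cong$MHRG$(n,n+1)$, whence $\mathcal{G}(Y_{n,n})=\mathcal{G}(Y_{n,n+1})$. Thus Corollary~\ref{cor:nn_nn+1} follows once the main isomorphism is in hand, and it suffices to construct an isomorphism of games from MHRG$(n,n+1)$ to HRG$(S_n)$ and then to identify $\mathcal{G}(S_n)$.

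The construction rests on one structural feature of the unimodal numbering $\alpha_{n,n+1}$: the label of a box $(i,j)$ depends only on the diagonal $d=j-i$, and as a function of $d$ it is symmetric under $d\mapsto 1-d$ (so boxes on diagonals $d$ and $1-d$ carry equal labels). First I would record the consequence at the level of hooks: the hook of a box and the hook of its diagonal reflection sweep out mirror-image sets of diagonals, and hence, by the symmetry of the label function, they carry \emph{identical} multisets of labels. This is exactly the situation in which rule (M4b)--(M4c) fires. Consequently every \emph{completed} move in MHRG$(n,n+1)$ is of one of two types: the forced removal of a reflected \emph{pair} of hooks, or the single removal of a hook that is its own reflection (the diagonal hooks and the hooks living in the extra $(n+1)$-st column). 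Next I would define the folding map $\Phi$ sending a position of MHRG$(n,n+1)$ to the shifted Young diagram obtained by folding the rectangle along $d=\tfrac12$ onto the staircase $S_n$, and show that $\Phi$ restricts to a bijection from $\mathcal{T}(Y_{n,n+1})$ onto the positions of HRG$(S_n)$, namely the shifted subdiagrams of $S_n$.

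With $\Phi$ in place, the heart of the argument is the move correspondence: I would check that under $\Phi$ a reflected pair of hook-removals, and a single self-reflected removal, each correspond to the removal of exactly one shifted hook of the image diagram, so that $Y\to Y'$ is a legal transition in MHRG$(n,n+1)$ if and only if $\Phi(Y)\to\Phi(Y')$ is a legal transition in HRG$(S_n)$. Since an isomorphism of games preserves $\mathcal{G}$-values, this yields $\mathcal{G}(Y)=\mathcal{G}(\Phi(Y))$ for all $Y\in\mathcal{T}(Y_{n,n+1})$, and in particular $\mathcal{G}(Y_{n,n+1})=\mathcal{G}(S_n)$ because $\Phi(Y_{n,n+1})=S_n$. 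Finally I would compute $\mathcal{G}(S_n)=\bigoplus_{1\le k\le n}k$ directly from the shifted-hook structure of the staircase, this being the known value of $S_n$ in the Hook Removing Game on shifted Young diagrams; together with the reduction above this settles both the isomorphism statement and the $\mathcal{G}$-value formula for MHRG$(n,n)$ and MHRG$(n,n+1)$.

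The main obstacle will be the move correspondence together with the reachability claim for $\Phi$. Concretely, I must prove that a single initiating choice in MHRG$(n,n+1)$, once (M4b)--(M4c) is run to completion, removes \emph{exactly} the reflected partner hook (for an off-diagonal initiating box) or exactly the one self-reflected hook---never a spurious third hook---and that the resulting diagram is again fold-symmetric, so that $\mathcal{T}(Y_{n,n+1})$ is precisely the image-class of $\Phi$. The delicate points are the interaction of a hook with its reflection near the central diagonals $d\in\{0,1\}$, where the label function is flat and the ranges of a hook and its reflection overlap, and the bookkeeping of the extra $(n+1)$-st column, which is what distinguishes the target staircase $S_n$ from the symmetric square and must be matched against the wrap-around behaviour of those shifted hooks of $S_n$ that reach its diagonal.
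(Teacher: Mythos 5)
Your proposal follows essentially the same route as the paper: the paper reduces MHRG$(n,n)$ to MHRG$(n,n+1)$ via the map $E$ of Theorem \ref{th:game_cor}, characterizes $\mathcal{T}(Y_{n,n+1})$ as the positions with fold-symmetric diagonal expression (Lemma \ref{lem:nn_symmetric}), realizes your folding map $\Phi$ concretely as the map $A$ (and its inverse $B$) that keeps the right half of the symmetric diagonal sequence, verifies the two-way move correspondence between reflected hook pairs (resp.\ self-reflected hooks) and single shifted hooks exactly as you outline, and quotes $\mathcal{G}(S_n)=\bigoplus_{1\le k\le n}k$ from Proposition \ref{pr:shifted}. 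The points you flag as delicate are precisely the ones the paper handles in Lemmas \ref{lem:nn_symmetric}, \ref{lemma:nn+1_sn}, and \ref{lemma:sn_nn+1}.
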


This paper is organized as follows. 
In Section 2, we fix our notation for Young diagrams and combinatorial game theory. 
In Section 3, we introduce the unimodal numbering, and explain the diagonal expression for Young diagrams. 
In Section 4, we prove Theorems \ref{th:mnI}, \ref{th:1nI}, and \ref{th:2nI} above. 
In Sections 5, we explain the diagonal expression for shifted Young diagrams, and prove Theorem \ref{th:nnI} above.

\paragraph{Acknowledgements.}
The authors would like to thank Professor Daisuke Sagaki and Yuki Motegi for their advice.


\section{Preliminaries}

We denote by $\mathbb{N}_{0}$ the set of all non-negative integers, and $\mathbb{N}$ the set of all positive integers.

\subsection{Combinatorial Games}

MHRG is an impartial game in the following sense (for the details on the combinatorial game theory, see, e.g., \cite{Conway} and \cite{Siegel}).
\begin{itemize}
    \item\ The game is played by two players.
    \item\ Two players alternately make a move.
    \item\ The player who makes the last move wins. 
    \item\ No chance elements (the possible moves in each position are determined from the beginning).
    \item\ No hidden information (both players will have complete knowledge of the game state at all times).
    \item\ All game positions are ``short'' (namely, there are finitely many positions that can be reached from a position, and any position can not appear twice in a play).
    \item\ Both players have the same set of possible moves in each position. 
\end{itemize}

\begin{definition}\label{def:outcome_classes}
A game position is called an $\mathcal{N}$-position (resp., $\mathcal{P}$-position) if the next (resp., previous) player has a winning strategy.
\end{definition}

\begin{proposition}[{\cite[Chapter I]{Siegel}}]
Each impartial game position is either an $\mathcal{N}$-position or a $\mathcal{P}$-position.
\end{proposition}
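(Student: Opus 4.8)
The plan is to prove this by well-founded induction on the game tree, exploiting the hypothesis that every position is ``short''. For a position $G$, let $r(G)$ denote the length of a longest play (a maximal sequence of legal moves) starting from $G$. Because the game is short, only finitely many positions are reachable from $G$ and no play is infinite, so $r(G)$ is a well-defined element of $\mathbb{N}_0$; moreover $r(G') < r(G)$ for every position $G'$ obtainable from $G$ in a single move. This furnishes the well-founded ordering along which the induction runs. It is convenient to restate Definition \ref{def:outcome_classes} in the equivalent form: $G$ is an $\mathcal{N}$-position precisely when the player to move has a winning strategy, and a $\mathcal{P}$-position precisely when the player to move has no winning strategy (so that the player who just moved does). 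The assertion to be proved is then that exactly one of these holds for each $G$.

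For the base case I would take the terminal positions, namely those $G$ with $r(G) = 0$, which admit no legal move; in MHRG the unique such position is the empty Young diagram $\emptyset$. By rule (M5) the player who reaches $\emptyset$ wins, so the player \emph{facing} $\emptyset$ cannot move and loses. Hence every terminal position is a $\mathcal{P}$-position and manifestly not an $\mathcal{N}$-position. For the inductive step, fix a non-terminal $G$ and assume that every option $G'$ of $G$ (each satisfying $r(G') < r(G)$) is exactly one of an $\mathcal{N}$- or a $\mathcal{P}$-position. I would then distinguish two cases. If some option $G'$ is a $\mathcal{P}$-position, the player to move at $G$ can move to $G'$, handing the opponent a position from which, by the induction hypothesis, the opponent has no winning strategy; thus the player to move wins, and $G$ is an $\mathcal{N}$-position. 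If, on the other hand, every option $G'$ is an $\mathcal{N}$-position, then whatever move the player to move makes, she leaves the opponent in a position where the opponent has a winning strategy; hence she loses no matter what, and $G$ is a $\mathcal{P}$-position. Since either $G$ has a $\mathcal{P}$-option or it does not, these two cases are exhaustive.

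It remains to observe mutual exclusivity: a position cannot be simultaneously an $\mathcal{N}$- and a $\mathcal{P}$-position, since the player to move either possesses a winning strategy or does not. The induction above in fact produces, at each stage, an explicit strategy by concatenating the strategies supplied by the induction hypothesis on the options, so both existence and uniqueness of the classification follow together. There is no genuine obstacle here, as this is a foundational determinacy result; the only points requiring care are the derivation of a well-founded rank function from the ``short'' hypothesis (ensuring the induction terminates and that every position has finite rank) and the bookkeeping of whose turn it is, so that the inductive transfer between the $\mathcal{N}$- and $\mathcal{P}$-conditions is stated from the correct player's perspective.
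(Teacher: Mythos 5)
Your proof is correct and is exactly the standard backward-induction (well-founded induction on the rank of a short game) argument that the paper does not reproduce but instead cites from \cite[Chapter I]{Siegel}. The only point worth flagging is that your opening ``equivalent restatement'' of a $\mathcal{P}$-position as ``the player to move has no winning strategy'' is really part of what is being proved; but your induction does establish both directions (it exhibits an explicit winning strategy for the previous player when every option is an $\mathcal{N}$-position), so the argument is sound as written.
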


\begin{proposition}[{\cite[Chapters I and IV]{Siegel}}]
Let $G$ be an impartial game position.
If $G$ is an $\mathcal{N}$-position, then there exists a move from $G$ to a $\mathcal{P}$-position.
If $G$ is a $\mathcal{P}$-position, then there exists no move from $G$ to a $\mathcal{P}$-position.
\end{proposition}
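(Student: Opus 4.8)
The plan is to argue by backward induction over the game tree, which is legitimate precisely because every position is \emph{short}: from $G$ only finitely many positions are reachable and no position recurs, so the relation ``is an option of'' is well-founded and we may induct on the maximal length of a play starting from $G$. Throughout we use the dichotomy from the preceding proposition (each position is either an $\mathcal{N}$-position or a $\mathcal{P}$-position) together with the two definitions: at an $\mathcal{N}$-position the player to move (the ``next'' player) has a winning strategy, while at a $\mathcal{P}$-position the ``previous'' player has one. The base case is a terminal position $G$ (one admitting no move, e.g.\ $\emptyset$): the player to move cannot move and hence loses under normal play, so $G$ is a $\mathcal{P}$-position, and both assertions hold vacuously, since there are no moves at all (hence none to a $\mathcal{P}$-position) and $G$ is not an $\mathcal{N}$-position.

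For the second assertion, suppose $G$ is a $\mathcal{P}$-position and, for contradiction, that some move carries $G$ to a position $H$ that is also a $\mathcal{P}$-position. The player who just moved from $G$ to $H$ is exactly the ``previous'' player at $H$, who by hypothesis possesses a winning strategy from $H$. Thus the player to move at $G$ can win by first playing to $H$ and thereafter following that strategy; this makes the ``next'' player at $G$ the winner, contradicting that $G$ is a $\mathcal{P}$-position. Hence no move from a $\mathcal{P}$-position reaches a $\mathcal{P}$-position.

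For the first assertion, suppose $G$ is an $\mathcal{N}$-position and, for contradiction, that every option $H$ of $G$ is an $\mathcal{N}$-position (invoking the dichotomy to rule out any third possibility). In each such $H$ the player to move, namely the opponent of whoever moved at the step $G\to H$, has a winning strategy. Consequently, whichever move the player to move at $G$ selects, the opponent can reply optimally and win; that is, the ``previous'' player at $G$ wins, so $G$ would be a $\mathcal{P}$-position. This contradicts the dichotomy, since a position cannot be both, so at least one option of $G$ must be a $\mathcal{P}$-position.

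The only genuinely delicate point is bookkeeping the swap of the ``next''/``previous'' roles across a single move, and ensuring that the phrase ``has a winning strategy'' is anchored by a well-founded induction rather than a circular appeal; the short-game hypothesis supplies exactly the well-foundedness needed, after which both implications follow from the definitions together with the dichotomy. I expect no substantive obstacle here --- this is the standard backward-induction (Sprague--Grundy style) argument --- so the proof is routine once the inductive framework is in place.
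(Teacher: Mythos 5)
The paper itself gives no proof of this proposition --- it is quoted from \cite[Chapters I and IV]{Siegel} --- so there is no internal argument to compare against. Your proof is the standard one and is correct: the short-game hypothesis makes the option relation well-founded so that ``winning strategy'' is meaningful, the base case (terminal positions are $\mathcal{P}$-positions) is handled, and the two contradiction arguments correctly track the swap of the next/previous roles across a single move while invoking the dichotomy of the preceding proposition. The only cosmetic remark is that your ``inductive step'' never actually uses the induction hypothesis --- both implications follow directly from the definitions plus the dichotomy once well-foundedness guarantees that strategies terminate --- but that does not affect correctness.
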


For an impartial game $\mathcal{X}$, we set $\mathcal{C}(\mathcal{X})$ the all game positions of $\mathcal{X}$.
Let $G,G' \in \mathcal{C}(\mathcal{X})$.
If $G'$ can be reached from $G$ by a single move, then we say that $G'$ is an option of $G$, and we write $G \rightarrow G'$ .
We set $\mathcal{O}(G) := \{G' \in \mathcal{C}(\mathcal{X}) \mid G \rightarrow G'\}$.
A transition from $G$ to $G'$ is, by definition, a sequence $G = G_0,G_1,\ldots,G_k=G', k\in\mathbb{N}_0,$ of game positions in $\mathcal{C}(\mathcal{X})$ such that
\[
G = G_0\xrightarrow{}G_1\xrightarrow{}\cdots\xrightarrow{} G_k=G'.
\]

\begin{definition}\label{def:game_iso}
Let $\mathcal{X}$ and $\mathcal{Y}$ be impartial games.
If there exists a bijection $f:\mathcal{C}(\mathcal{X})\to\mathcal{C}(\mathcal{Y})$ such that $\mathcal{O}(f(G)) = f(\mathcal{O}(G))$ for $G \in \mathcal{C}(\mathcal{X})$, then we say that $\mathcal{X}$ is isomorphic to $\mathcal{Y}$, and we call $f$ an isomorphism from $\mathcal{X}$ to $\mathcal{Y}$.
\end{definition}


\subsection{$\mathcal{G}$-value}
We recall the $\mathcal{G}$-value of a position in an impartial game.

\begin{definition}\label{def:G_value}
Let $G$ be a game position.
We define $\mathcal{G}(G)\in \mathbb{N}_{0}$, called the $\mathcal{G}$-value of $G$, by
\[
\mathcal{G}(G):=\mathrm{mex} \{\mathcal{G}(G')\mid G \rightarrow G'\},
\]
where for a proper subset $T$ of $\mathbb{N}_{0}$, we set $\mathrm{mex}\ T:=\mathrm{min} (\mathbb{N}_0 \setminus T).$

\end{definition}

The following theorem is well-known.

\begin{theorem}[{\cite[Chapter IV]{Siegel}}]\label{th:G_value}
For a game position $G$, $\mathcal{G}(G)=0$ if and only if $G$ is a $\mathcal{P}$-position.
\end{theorem}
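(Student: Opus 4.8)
The plan is to argue by well-founded (Noetherian) induction on game positions, which is legitimate precisely because every position of the game is \emph{short}: there is no infinite descending sequence of options, so the relation $G\to G'$ is well-founded and we may induct along it. The statement to be proved is the equivalence ``$\mathcal{G}(G)=0$ if and only if $G$ is a $\mathcal{P}$-position,'' and I would establish it for all $G$ by assuming it already holds for every option $G'\in\mathcal{O}(G)$.

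For the base case I would take a terminal position $G$, i.e.\ one with $\mathcal{O}(G)=\emptyset$. On the one hand $\mathcal{G}(G)=\mathrm{mex}\,\emptyset=\mathrm{min}\,\mathbb{N}_0=0$. On the other hand, the player to move at a terminal position cannot move and hence loses under the normal-play convention (in MHRG this is the player confronted with $\emptyset$), so the previous player wins and $G$ is a $\mathcal{P}$-position. Thus both sides hold, and the equivalence is true at terminal positions.

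For the inductive step I would fix a non-terminal $G$ and assume the equivalence for every $G'\in\mathcal{O}(G)$. The key observation is that, by the definition of mex, $\mathcal{G}(G)=0$ holds if and only if $0\notin\{\mathcal{G}(G')\mid G\to G'\}$, that is, if and only if no option $G'$ satisfies $\mathcal{G}(G')=0$. Applying the induction hypothesis to each option, this is equivalent to the statement that no option $G'$ is a $\mathcal{P}$-position, i.e.\ that there is no move from $G$ to a $\mathcal{P}$-position. Finally, combining the dichotomy that every position is either an $\mathcal{N}$- or a $\mathcal{P}$-position with the two Propositions recalled above (a $\mathcal{P}$-position admits no move to a $\mathcal{P}$-position, whereas an $\mathcal{N}$-position does), the condition ``no move from $G$ reaches a $\mathcal{P}$-position'' is exactly the condition that $G$ itself is a $\mathcal{P}$-position. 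Chaining these equivalences gives $\mathcal{G}(G)=0\iff G$ is a $\mathcal{P}$-position, completing the induction.

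I do not expect a genuine obstacle here, since this is the classical foundation of Sprague--Grundy theory; the only points requiring care are the justification of the induction itself, which rests on the short-game hypothesis listed in Section 2.1, and the correct reading of mex, namely that $\mathrm{mex}\,T=0$ precisely when $0\notin T$. The remaining work is merely the bookkeeping of chaining the stated Propositions together.
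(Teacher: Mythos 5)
Your argument is correct and complete: the well-founded induction is justified by the short-game hypothesis, the base case correctly evaluates $\mathrm{mex}\,\emptyset=0$, and the inductive step correctly chains the mex characterization with the two Propositions recalled in Section 2.1. Note that the paper itself gives no proof of this theorem (it is quoted from \cite[Chapter IV]{Siegel} as well-known), and your proof is precisely the standard argument underlying that reference, so there is nothing to contrast.
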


By Theorem \ref{th:G_value}, if we want only to determine whether a given game position $G$ is a $\mathcal{P}$-position or not, we need only to check if $\mathcal{G}(G)=0$ or not;
we do not need the precise value of $\mathcal{G}(G)$.
However, it is worthwhile to obtain a closed formula for $\mathcal{G}(G)$, because there is a nice application to, for example, the disjunctive sums of game positions;
see, e.g., \cite[Chapter IV]{Siegel}.

The following proposition can be easily shown.

\begin{proposition}\label{prop:game_iso}
Let $\mathcal{X}$ and $\mathcal{Y}$ be games.
If there exists an isomorphism $f:\mathcal{C}(\mathcal{X})\to\mathcal{C}(\mathcal{Y})$, then $\mathcal{G}(G)=\mathcal{G}(f(G))$ for $G \in \mathcal{C}(\mathcal{X})$. 
\end{proposition}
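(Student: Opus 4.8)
The plan is to prove $\mathcal{G}(G) = \mathcal{G}(f(G))$ by well-founded induction on the game structure, exploiting the hypothesis that all positions are ``short''. Because no position appears twice in any play and each position has only finitely many options, the relation $\rightarrow$ admits no infinite descending chain, so I can assign to each $G \in \mathcal{C}(\mathcal{X})$ a rank $\mathrm{rank}(G) \in \mathbb{N}_0$, defined recursively by $\mathrm{rank}(G) = 0$ when $\mathcal{O}(G) = \emptyset$ and $\mathrm{rank}(G) = 1 + \max\{\mathrm{rank}(G') \mid G' \in \mathcal{O}(G)\}$ otherwise. Every option of $G$ then has strictly smaller rank than $G$, so induction on $\mathrm{rank}(G)$ over positions of $\mathcal{X}$ is legitimate, and the statement to be proved for each $G$ is simply $\mathcal{G}(G) = \mathcal{G}(f(G))$.

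First I would settle the base case: if $\mathcal{O}(G) = \emptyset$, then $\mathcal{G}(G) = \mathrm{mex}\,\emptyset = 0$, and since $f$ is an isomorphism, $\mathcal{O}(f(G)) = f(\mathcal{O}(G)) = f(\emptyset) = \emptyset$, whence $f(G)$ is likewise terminal and $\mathcal{G}(f(G)) = 0 = \mathcal{G}(G)$.

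Next, for the inductive step, I would assume the claim holds for all positions of $\mathcal{X}$ of rank smaller than $\mathrm{rank}(G)$; this covers every option $G' \in \mathcal{O}(G)$, so $\mathcal{G}(G') = \mathcal{G}(f(G'))$ for each such $G'$. Combining this with the defining identity $\mathcal{O}(f(G)) = f(\mathcal{O}(G))$ of an isomorphism and substituting into the definition of the $\mathcal{G}$-value gives
\begin{align*}
\mathcal{G}(f(G))
&= \mathrm{mex}\{\mathcal{G}(H) \mid H \in \mathcal{O}(f(G))\} \\
&= \mathrm{mex}\{\mathcal{G}(f(G')) \mid G' \in \mathcal{O}(G)\} \\
&= \mathrm{mex}\{\mathcal{G}(G') \mid G' \in \mathcal{O}(G)\} \\
&= \mathcal{G}(G),
\end{align*}
which closes the induction.

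There is no deep obstacle here; the argument is essentially bookkeeping. The only points demanding care are to justify that $\mathrm{rank}$ is well defined on all of $\mathcal{C}(\mathcal{X})$, which is exactly where the shortness hypothesis is used, and to observe that, because $f$ is a bijection, it carries the options of $G$ one-to-one onto the options of $f(G)$. Thanks to this bijectivity together with $\mathcal{O}(f(G)) = f(\mathcal{O}(G))$, no option value is lost or duplicated in passing through $f$, so the set of $\mathcal{G}$-values appearing in the two $\mathrm{mex}$ computations is literally the same; this matching of option-value sets is the crux that makes the two $\mathrm{mex}$ values coincide.
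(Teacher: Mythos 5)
Your proof is correct: the induction on rank (well-founded by shortness), the base case for terminal positions, and the use of $\mathcal{O}(f(G)) = f(\mathcal{O}(G))$ to identify the two option-value sets before taking $\mathrm{mex}$ is exactly the standard argument. The paper omits the proof entirely (stating only that the proposition ``can be easily shown''), and your write-up supplies precisely the argument that is intended.
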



\subsection{Young Diagrams and Numbering}

Let $m\in \mathbb{N}$, and let $\lambda_1,\ldots,\lambda_m\in\mathbb{N}_{0}$ be such that $\lambda_1\ge \cdots\ge\lambda_m\ge 0$.
Then, the set $Y=(\lambda_1,\ldots,\lambda_m):= \{(i,j)\in \mathbb{N}^2\mid 1\le i \le m, 1\le j \le \lambda_i\}$ is called the Young diagram corresponding to $(\lambda_1,\ldots,\lambda_m)$.
An element of the Young diagram is called a box, and the Young diagram is described in terms of boxes as follows.

\ytableausetup{centertableaux, mathmode, boxsize=2.5em}
\begin{center}
$Y=(6,6,5,3,3)=$
\begin{ytableau}
    (1,1)&(1,2)&(1,3)&(1,4)&(1,5)&(1,6)\\
    (2,1)&(2,2)&(2,3)&(2,4)&(2,5)&(2,6)\\
    (3,1)&(3,2)&(3,3)&(3,4)&(3,5)\\
    (4,1)&(4,2)&(4,3)\\
    (5,1)&(5,2)&(5,3)\\
\end{ytableau}
\end{center}

For $i\in \mathbb{N}$, the subset $\{(i,j)\mid j\in \mathbb{N}\}\cap Y$ of $Y$ is called the $i$-th row of $Y$.
Similarly, for $j\in \mathbb{N}$, the subset $\{(i,j)\mid i\in \mathbb{N}\}\cap Y$ of $Y$ is called the $j$-th column of $Y$.

For a Young diagram $Y$, let $\mathcal{F}(Y)$ denote the set of all Young diagrams contained in $Y$.
Let $\#(Y)$ denote the number of the boxes contained in $Y$.
It is obvious that if $Y' \subseteq Y$, then $\#(Y') \le \#(Y)$.

For a Young diagram $Y$, a map $\alpha: Y \to \mathbb{N}$ is called a numbering of $Y$.
For a box $(i,j)\in Y$, if $\alpha(i,j) = x$, then we say that the box $(i,j)$ has the number $x$.
Let $Y$ be a Young diagram with a numbering $\alpha$.
For a subset $X$ of $Y$, we set $\mathcal{A}(X) = \mathcal{A}_{\alpha}(X) := \{\!\{ \alpha(i,j)\mid (i,j)\in X\}\!\}$, where $\{\!\{x_1,\ldots,x_N \}\!\}$ denotes the multiset consisting of $x_1,\ldots,x_N$.


\subsection{Hooks of a Young Diagram}

\begin{definition}\label{def:hook}
For a box $(i,j)$ of a Young diagram $Y$,
\[
h(i,j)=h_Y{(i,j)} := \{(i,j)\} \sqcup \{(i',j)\in Y\mid i' > i\} \sqcup \{(i,j')\in Y\mid j' > j\}
\]
is called the hook (in $Y$) corresponding to the box $(i,j)$.
\end{definition}

\begin{definition}\label{def:hook_removing}
For a box $(i,j)$ of a Young diagram $Y$, we remove the hook $h_Y{(i,j)}$ corresponding to the box $(i,j)$ as follows (see also Example \ref{ex:hook_removing} below):
\begin{enumerate}
\item Remove each box in the hook $h_Y{(i,j)}$.
\item Move each box $(i',j')$ satisfying $i'>i$ and $j'>j$ to $(i'-1,j'-1)$.
\end{enumerate}
We denote by $Y\setminus h_Y{(i,j)}$ the Young diagram obtained by removing the hook $h_Y{(i,j)}$ corresponding to the box $(i,j)$ from $Y$.
\end{definition}

\begin{example}\label{ex:hook_removing}
If we remove the hook corresponding to the box $(2,2)$ from the Young diagram $Y = (6,6,5,3,3)$, then we get $Y'=Y\setminus h_Y{(2,2)}=(6,4,2,2,1)$.\\
\end{example}

\ytableausetup{centertableaux, mathmode, boxsize=1.35em}
\begin{center}
\begin{ytableau}
    \ &\ &\ &\ &\ &\ \\
    \ &*(gray)\ &*(gray)\ &*(gray)\ &*(gray)\ &*(gray)\ \\
    \ &*(gray)\ &\ &\ &\ \\
    \ &*(gray)\ &\ \\
    \ &*(gray)\ &\ \\
\end{ytableau}
$\to$
\begin{ytableau}
    \ &\ &\ &\ &\ &\ \\
    \ &\none[\nwarrow] &\none &\none &\none[\nwarrow] &\none \\
    \ &\none &\ &\ &\ \\
    \ &\none &\ \\
    \ &\none[\nwarrow] &\ \\
\end{ytableau}
$\to$
\begin{ytableau}
    \ &\ &\ &\ &\ &\ \\
    \ &\ &\ &\ \\
    \ &\ \\
    \ &\ \\
    \ \\
\end{ytableau}\\
\end{center}


\section{Unimodal Numbering and Diagonal Expression}


\subsection{Unimodal Numbering of Young Diagrams}

Let and fix $m,n\in\mathbb{N}$.
We denote by $Y_{m,n}:= \{(i,j)\in \mathbb{N}^2\mid 1\le i \le m, 1\le j \le n\}$ the rectangular Young diagram.
For $Y\in \mathcal{F}(Y_{m,n})$, we define a special numbering $\alpha_{m,n}:Y \to \mathbb{N}$, called the unimodal numbering of $Y$, as follows:
For $(i,j)\in Y$, we set $\alpha_{m,n}(i,j):=\min\{j-i+m, i-j+n\} \in \mathbb{N}$.
In what follows, boxes in $Y\in \mathcal{F}(Y_{m,n})$ are always numbered by the unimodal numbering $\alpha_{m,n}$.

\begin{figure}[h]
\centering
\begin{ytableau}
    3&4&3&2&1\\
    2&3&4&3&2\\
    1&2&3&4&3
\end{ytableau}
\qquad
\begin{ytableau}
    4&5&5&4&3&2&1\\
    3&4&5&5&4&3&2\\
    2&3&4&5&5&4&3\\
    1&2&3&4&5&5&4
\end{ytableau}
\caption{unimodal numberings in the case that $m=3$ and $n=5$, and the case that $m=4$ and $n=7$.}
\end{figure}

\begin{remark}\label{remark:unimodal}
Let $Y\in \mathcal{F}(Y_{m,n})$.
By the definition of the unimodal numbering $\alpha_{m,n}$, we can easily check the following.
\begin{enumerate}
    \item[(1)] If $Y$ contains the box $(m,1)$, then it has the number $1$.
    If $Y$ contains the box $(1,n)$, then it has the number $1$.
    \item[(2)] The boxes $(i,j)$ and $(i+1,j+1)$ have the same number (if they exist in $Y$).
    \item[(3)] The maximum value of $\alpha_{m,n}:Y_{m,n} \to \mathbb{N}$ is equal to $\hat{\alpha}_{m,n} := \lfloor (n+m)/2 \rfloor$, where $\lfloor x \rfloor := \max\{y\in\mathbb{Z}\mid y \le x\}$ for $x\in \mathbb{R}$.
\end{enumerate}
\end{remark}


\subsection{Diagonal Expression of a Young Diagram}

We define the diagonal expression of a Young diagram in $\mathcal{F}(Y_{m,n})$.
We write an element $\boldsymbol{a} \in \mathbb{N}_0^{m+n+1}$ as $\boldsymbol{a} = (a_{-m},a_{-m+1},\ldots,a_{-1},\dot{a_{0}},a_{1},\ldots,a_{n-1},a_n)$, and we call $a_{k}$ the $k$-th component of $\boldsymbol{a}$ for $-m \le k \le n$.
If necessary, we will accentuate the $0$-th component (i.e., $a_0$ in $\boldsymbol{a}$ above) by putting a dot above it as above.

\begin{definition}\label{def:AD_condition}
Let $\boldsymbol{a}=(a_{-m},\ldots,a_{n}) \in \mathbb{N}_0^{m+n+1}$.
For $-m < i \le 0$ (resp., $0 < i \le n$), we say that the pair $(a_{i-1}, a_{i})$ satisfies the adjacency condition if $0\le a_i-a_{i-1}\le 1$ (resp., $0\le a_{i-1}-a_i\le 1$).
We say that $\boldsymbol{a}$ satisfies the adjacency condition if $(a_{i-1}, a_{i})$ satisfies the adjacency condition for all $-m < i \le n$.
\end{definition}

For $m,n\in\mathbb{N}$, let $\mathbb{D}_{m,n}\subset\mathbb{N}_0^{m+n+1}$ denote the set of all element $\boldsymbol{a}=(a_{-m},\ldots,a_{n}) \in \mathbb{N}_0^{m+n+1}$ with $a_{-m} = a_n = 0$ satisfying the adjacency condition.

\begin{example}\label{ex:DSeq}
Let $m = 2$ and $n = 4$.
Then,
\[
(0,1,\dot{2},1,1,1,0) \in \mathbb{D}_{2,4},\qquad (0,0,\dot{2},2,1,1,0) \notin \mathbb{D}_{2,4}.
\]
In the second case, the pair $(a_{-1}, a_0) = (0,2)$ does not satisfy the adjacency condition.
\end{example}

Let $Y \in \mathcal{F}(Y_{m,n})$, and set $d_k = d_k(Y) := \#\{(i,j)\in Y\mid j-i=k\}$ for $k \in \mathbb{Z}$.
Note that if $k \le -m$ or $k \ge n$, then $d_k = 0$.

\begin{remark}\label{remark:d_box}
For $i,j\geq 2$, if $(i,j)\in Y$, then $(i-1,j-1) \in Y$.
Also, if $(i,j)\notin Y$, then $(i+a,j+a) \notin Y$ for $a\in\mathbb{N}$.
Hence we see that $d_k = \max\{\min\{i,j\} \mid (i,j)\in Y, j-i=k\}$ for $k\in \mathbb{Z}$.
\end{remark}

We can easily show the following lemma, which will be used later.

\begin{lemma}\label{lemma:diagonal}
Keep the notation and setting above. 
\begin{enumerate}
    \item[(1)] Let $k\ge0$.
    Then, $(d_k+1,d_k+k+1)\notin Y$.
    Moreover, if $d_k > 0$, then $(d_k,d_k+k)\in Y$.
    \item[(2)] Let $k<0$.
    Then $(d_k-k+1,d_k+1)\notin Y$.
    Moreover, if $d_k > 0$, then $(d_k-k,d_k)\in Y$.
\end{enumerate}
\end{lemma}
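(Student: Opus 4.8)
The plan is to reduce both parts to the explicit description of the boxes lying on a fixed diagonal, which is furnished by Remark \ref{remark:d_box}. Fix $k\in\mathbb{Z}$ and consider the diagonal $\{(i,j)\in Y\mid j-i=k\}$. The key observation is that Remark \ref{remark:d_box} forces the boxes of $Y$ on this diagonal to form an initial segment: if a box with $i,j\ge 2$ lies on the diagonal and belongs to $Y$, then so does the box shifted by $(-1,-1)$, which again lies on the same diagonal. Consequently $d_k$ is not merely the number of such boxes but is realized as the maximum of $\min\{i,j\}$ over them, exactly as recorded in Remark \ref{remark:d_box}.

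For part (1), suppose $k\ge 0$ and write the boxes on the diagonal as $(i,i+k)$ for $i\ge 1$; here $\min\{i,i+k\}=i$, so $d_k=\max\{i\mid (i,i+k)\in Y\}$ (with the convention that this maximum is $0$ when no such box exists). If $d_k>0$, the maximum is attained, giving $(d_k,d_k+k)\in Y$. In all cases the index $i=d_k+1$ exceeds the maximum, whence $(d_k+1,d_k+k+1)=(d_k+1,(d_k+1)+k)\notin Y$, which is precisely the first assertion.

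Part (2) is entirely parallel after swapping the roles of the two coordinates. For $k<0$ the boxes on the diagonal are $(j-k,j)$ for $j\ge 1$ (note $-k>0$), and now $\min\{j-k,j\}=j$, so $d_k=\max\{j\mid (j-k,j)\in Y\}$. Attaining this maximum when $d_k>0$ yields $(d_k-k,d_k)\in Y$, while the index $j=d_k+1$ lies beyond it, so $(d_k-k+1,d_k+1)\notin Y$.

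There is no real obstacle here; the content is essentially bookkeeping, and the single substantive input is the initial-segment structure along diagonals provided by Remark \ref{remark:d_box}. The only point requiring a little care is keeping the two parametrizations $(i,i+k)$ and $(j-k,j)$ straight and checking in each case which coordinate realizes the minimum, so that the count $d_k$ coincides with the maximal index running along the diagonal.
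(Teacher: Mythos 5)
Your proof is correct; the paper states this lemma without proof (``We can easily show the following lemma''), and your argument via Remark \ref{remark:d_box} --- that the boxes on a fixed diagonal form an initial segment, so $d_k$ equals the maximal index along that diagonal --- is exactly the intended bookkeeping, carried out carefully in both parametrizations.
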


The following proposition is well-known for experts;
in Appendix A, we give a proof for it for completion.

\begin{proposition}\label{prop:DRep}
For every $Y \in \mathcal{F}(Y_{m,n})$, 
\[
\boldsymbol{d}(Y) = \boldsymbol{d}_{m,n}(Y) := (d_{-m}(Y),\ldots,d_{n}(Y)) \in \mathbb{D}_{m,n}.
\]
Moreover, the map 
\[
\boldsymbol{d} = \boldsymbol{d}_{m,n}:\mathcal{F}(Y_{m,n})\to \mathbb{D}_{m,n}, Y \mapsto \boldsymbol{d}(Y) = \boldsymbol{d}_{m,n}(Y),
\]
is a bijection;
we call $\boldsymbol{d}(Y)$ the diagonal expression of $Y$.
\end{proposition}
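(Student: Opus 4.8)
The plan is to prove the claim in two stages: first that $\boldsymbol{d}(Y)$ genuinely lies in $\mathbb{D}_{m,n}$ for every $Y\in\mathcal{F}(Y_{m,n})$, and then that $\boldsymbol{d}$ is a bijection by exhibiting an explicit inverse built diagonal by diagonal. The underlying geometric fact driving everything is the ``staircase'' description of the diagonals recorded in Remark~\ref{remark:d_box}: along the diagonal $j-i=k$ the boxes of $Y$ occupy exactly a contiguous initial segment of $d_k$ positions, so a single integer $d_k$ faithfully encodes the intersection of $Y$ with that diagonal.

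For membership, the boundary conditions $d_{-m}=d_n=0$ are immediate: a box with $j-i=-m$ would force $i=j+m\ge m+1$, and a box with $j-i=n$ would force $j=i+n\ge n+1$, both impossible inside $Y_{m,n}$. The adjacency condition is where Lemma~\ref{lemma:diagonal} enters. Fix $k\ge 0$; I will show $0\le d_k-d_{k+1}\le 1$. If $d_{k+1}>0$, then $(d_{k+1},d_{k+1}+k+1)\in Y$ by Lemma~\ref{lemma:diagonal}(1), so its left neighbour $(d_{k+1},d_{k+1}+k)\in Y$ lies on diagonal $k$ and forces $d_k\ge d_{k+1}$ via Remark~\ref{remark:d_box}. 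For the reverse estimate, if $d_k\ge 2$ then $(d_k,d_k+k)\in Y$, so its upper neighbour $(d_k-1,d_k+k)\in Y$ lies on diagonal $k+1$ and gives $d_{k+1}\ge d_k-1$ (the cases $d_k\le 1$ being automatic since $d_{k+1}\ge 0$). The inequalities on the negative diagonals follow in exactly the same way from Lemma~\ref{lemma:diagonal}(2), with the roles of rows and columns interchanged, matching the ``increasing'' form of the adjacency condition for $-m<i\le 0$.

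For bijectivity, I would define a candidate inverse $\Phi:\mathbb{D}_{m,n}\to\mathcal{F}(Y_{m,n})$ by filling, for each $k$, the $a_k$ boxes on diagonal $k$ dictated by Remark~\ref{remark:d_box}; concretely $\Phi(\boldsymbol{a}) := \{(t,t+k)\mid k\ge 0,\ 1\le t\le a_k\}\cup\{(t-k,t)\mid k<0,\ 1\le t\le a_k\}$. Three things must be checked. First, $\Phi(\boldsymbol{a})$ is a Young diagram: the presence of the left and upper neighbours of each box translates precisely into the two halves of the adjacency condition (for instance, the upper neighbour of a box on diagonal $k\ge 0$ sits on diagonal $k+1$ and is present because $a_{k+1}\ge a_k-1$, while its left neighbour sits on diagonal $k-1$ and is present because $a_{k-1}\ge a_k$). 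Second, $\Phi(\boldsymbol{a})\subseteq Y_{m,n}$: iterating the adjacency bound from $a_{-m}=0$ yields $a_k\le k+m$ for $k\le 0$, whence every box has row index $a_k-k\le m$, and symmetrically from $a_n=0$ every box has column index $\le n$. Third, $\boldsymbol{d}\circ\Phi=\mathrm{id}$ holds by construction, while $\Phi\circ\boldsymbol{d}=\mathrm{id}$ is the staircase description of Remark~\ref{remark:d_box} once more. Hence $\boldsymbol{d}$ is a bijection with inverse $\Phi$.

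The routine but slightly delicate part will be the careful bookkeeping of boundary indices near $k=0$ and at the two ends $k=-m,n$ when verifying the Young-diagram property and the containment $\Phi(\boldsymbol{a})\subseteq Y_{m,n}$; in particular one must correctly pair the two different forms of the adjacency condition (increasing for $-m<i\le 0$, decreasing for $0<i\le n$) with the ``left neighbour'' versus ``upper neighbour'' checks, and dispose of the degenerate cases where a diagonal is empty. None of this is conceptually hard, but it is where a fully rigorous write-up spends most of its effort.
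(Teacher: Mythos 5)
Your proposal is correct and follows essentially the same route as the paper's Appendix A: the adjacency condition is derived from Lemma \ref{lemma:diagonal} together with the contiguous-initial-segment description of diagonals in Remark \ref{remark:d_box}, and bijectivity is established by reconstructing the diagram diagonal by diagonal --- your $\Phi(\boldsymbol{a})$ is exactly the paper's set $\{(i,j)\mid \min\{i,j\}\le a_{j-i}\}$, with the Young-diagram property and the containment in $Y_{m,n}$ checked from the adjacency condition and the boundary values $a_{-m}=a_n=0$ in the same way. The only cosmetic differences are that you verify the Young-diagram property directly via neighbours rather than by the contrapositive, and you package injectivity and surjectivity as a two-sided inverse.
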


\begin{example}\label{ex:DRep}
Assume that $m=3$ and $n=5$.
If $Y \in \mathcal{F}(Y_{3,5})$ is
\begin{center}
$Y = $
\begin{ytableau}
    3&4&3&2&1\\
    2&3&4&3\\
    1&2&3
\end{ytableau}\ ,
\end{center}
then $\boldsymbol{d}(Y) = \boldsymbol{d}_{3,5}(Y) = (0,1,2,\dot{3},2,2,1,1,0)$.
\end{example}


\subsection{Diagonal Expression and Hooks}
Let $Y\in\mathcal{F}(Y_{m,n})$.
Let $(i,j)\in Y$, and set $Y' := Y\setminus h_Y{(i,j)}$.
Let $(i',j)$ be the bottom box in the $j$-th column of $Y$ (note that $i' \ge i$), and $(i,j')$ the rightmost box in the $i$-th row of $Y$ (note that $j' \ge j$).
For $k \in \mathbb{Z}$, it holds that
\[
d_k(Y)-d_k(Y')=
\begin{cases}
    1 & \text{if } j-i'\le k\le j'-i,\\
    0 & \text{otherwise}.
\end{cases}
\]
Hence the diagonal expression of $Y'$ is
\begin{align*}
\boldsymbol{d}(Y') = (\ldots,d_{j-i'-1}(Y),\ &d_{j-i'}(Y)-1,d_{j-i'+1}(Y)-1,\ldots,\\
&d_{j'-i-1}(Y)-1,d_{j'-i}(Y)-1,d_{j'-i+1}(Y),\ldots). \tag{3.1} \label{eq:1}
\end{align*}

Let $\boldsymbol{a} = (a_{-m},\ldots, a_n) \in \mathbb{D}_{m,n}$ , $\boldsymbol{a'} = (a'_{-m},\ldots, a'_n) \in \mathbb{N}_0^{m+n+1}$, and $-m < l \le r < n$.
If $a'_{k} = a_{k}-1$ for $l \le k \le r$, and $a'_{k} = a_{k}$ for the other $k$'s, then we write $\boldsymbol{a}\xrightarrow{l,r}\boldsymbol{a'}$ or $\boldsymbol{a'} = \boldsymbol{a}_{[l,r]}$.
In this case, the pair $(a'_{k-1}, a'_{k})$ satisfies the adjacency condition for all $-m<k\le n$ but $k=l,r+1$.
Hence, if $(a'_{l-1}, a'_{l})$ and $(a'_{r}, a'_{r+1})$ satisfy the adjacency condition, then $\boldsymbol{a'} \in \mathbb{D}_{m,n}$.

\begin{lemma}\label{lemma:hook_and_DRep}
Let $Y,Y'\in \mathcal{F}(Y_{m,n})$.
The following are equivalent.
\begin{enumerate}
    \item[(1)] there exists a box $(i,j)\in Y$ such that $Y' = Y\setminus h_Y{(i,j)}$.
    \item[(2)] There exist $-m<l\le r < n$ such that $\boldsymbol{d}(Y)\xrightarrow{l,r}\boldsymbol{d}(Y')$.
\end{enumerate}
In this case, it hold that $l = j-i'$ and $r = j'-i$, where $(i',j)$ is the bottom box in the $j$-th column of $Y$, and $(i,j')$ is the rightmost box in the $i$-th row of $Y$.
\end{lemma}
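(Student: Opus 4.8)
The plan is to prove the equivalence $(1)\Leftrightarrow(2)$ by direct computation from the explicit formula \eqref{eq:1} for $\boldsymbol{d}(Y')$, followed by a verification that the boundary adjacency conditions hold. The forward direction is essentially already done: given a box $(i,j)\in Y$ with $Y'=Y\setminus h_Y(i,j)$, equation \eqref{eq:1} tells us exactly that $d_k(Y')=d_k(Y)-1$ for $j-i'\le k\le j'-i$ and $d_k(Y')=d_k(Y)$ otherwise, which is precisely the statement $\boldsymbol{d}(Y)\xrightarrow{l,r}\boldsymbol{d}(Y')$ with $l=j-i'$ and $r=j'-i$. I would first check that these indices satisfy the required range $-m<l\le r<n$: since $(i,j)\in Y\subseteq Y_{m,n}$ we have $1\le i\le m$ and $1\le j\le n$, and since $(i',j),(i,j')\in Y$ we get $i'\le m$, $j'\le n$, giving $l=j-i'>-m$ and $r=j'-i<n$; the inequality $l\le r$ follows from $i\le i'$, $j\le j'$.

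For the reverse direction, suppose $\boldsymbol{d}(Y)\xrightarrow{l,r}\boldsymbol{d}(Y')$ with $-m<l\le r<n$. Since $\boldsymbol{d}$ is a bijection onto $\mathbb{D}_{m,n}$ (Proposition~\ref{prop:DRep}), it suffices to locate a box $(i,j)\in Y$ whose hook removal produces the diagonal expression $\boldsymbol{d}(Y)_{[l,r]}$, and then \eqref{eq:1} forces $Y'=Y\setminus h_Y(i,j)$ by injectivity of $\boldsymbol{d}$. The natural candidate is to \emph{read off} the box from the endpoints $l$ and $r$: I would set $j-i'=l$ and $j'-i=r$ and solve using Lemma~\ref{lemma:diagonal}, which identifies the extremal boxes on each diagonal. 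Concretely, the leftmost diagonal $l$ being decremented should correspond to the bottom of the $j$-th column and the rightmost diagonal $r$ to the end of the $i$-th row; I would define $(i,j)$ so that $i=d_l(Y)-l$ if $l<0$ (or $d_l(Y)$ if $l\ge 0$, adjusting the column index accordingly) and dually for $r$, then verify $(i,j)\in Y$ using the ``moreover'' clauses of Lemma~\ref{lemma:diagonal}.

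\emph{The main obstacle} I anticipate is the bookkeeping of the boundary adjacency conditions together with the case analysis on the signs of $l$ and $r$. The passage preceding the lemma already observes that $(a'_{k-1},a'_k)$ satisfies the adjacency condition for all $k\ne l,r+1$, and that membership $\boldsymbol{a'}\in\mathbb{D}_{m,n}$ reduces to checking adjacency at precisely $k=l$ and $k=r+1$; but establishing that these two conditions are \emph{automatically} satisfied when $\boldsymbol{a'}$ genuinely arises from a hook removal — equivalently, that the decremented diagonals form a valid hook shape — is where the geometry of the Young diagram enters. The sign of $l$ (and of $r$) governs whether the relevant adjacency comparison is $0\le a_l-a_{l-1}\le 1$ or $0\le a_{l-1}-a_l\le 1$, so I expect roughly four subcases ($l,r$ each positive or negative, with the diagonal $0$ as a boundary). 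I would organize this by using Remark~\ref{remark:d_box} and Lemma~\ref{lemma:diagonal} to translate each decrement at an endpoint into the existence or non-existence of a specific box, thereby pinning down $(i,j)$ uniquely and confirming it lies in $Y$; the final identities $l=j-i'$, $r=j'-i$ then drop out of this identification.
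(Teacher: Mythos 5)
Your plan is essentially the paper's own proof: the forward implication and the identities $l=j-i'$, $r=j'-i$ are read off directly from \eqref{eq:1}, and the reverse implication is proved by using Lemma \ref{lemma:diagonal} to locate the extremal boxes on diagonals $l$ and $r$, assembling the corner box from them (with a case split on the signs of $l$ and $r$; the paper writes out $l\le r\le 0$ and declares the other cases similar), checking via $d_k(Y')=d_k(Y)$ for $k<l$ and $k>r$ that these extremal boxes really are the bottom of their column and the end of their row, and then invoking \eqref{eq:1} plus injectivity of $\boldsymbol{d}$. The one slip in your sketch is the formula for the corner: in the case $l\le r\le 0$ it is $(d_r(Y)-r,\,d_l(Y))$ --- the row index comes from the $r$-diagonal and the column index from the $l$-diagonal --- whereas $d_l(Y)-l$ is the row index of the leg's bottom box $(i',j)$, not of $(i,j)$.
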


\begin{example}\label{ex:HR_on_DRep}
Let $Y$ be as in Example \ref{ex:DRep}, and let $Y' = Y\setminus h_{Y}(1,4)$.

\begin{center}
$Y=\ $\begin{ytableau}
    3&4&3&*(gray)2&*(gray)1\\
    2&3&4&*(gray)3\\
    1&2&3
\end{ytableau}
\qquad$\to$\qquad
$Y'=\ $\begin{ytableau}
    3&4&3\\
    2&3&4\\
    1&2&3
\end{ytableau}
\end{center}
In the diagonal expression, we see that 
\[
\boldsymbol{d}(Y) = (0,1,2,\dot{3},2,2,1,1,0),\qquad \boldsymbol{d}(Y') = (0,1,2,\dot{3},2,\underline{1,0,0},0),
\]
and $\boldsymbol{d}(Y)\xrightarrow{2,4}\boldsymbol{d}(Y')$.
\end{example}

\begin{proof}[Proof of Lemma \ref{lemma:hook_and_DRep}]
The implication (1)$\Rightarrow$(2) and equalities $l = j-i', r = j'-i$ follow from \eqref{eq:1}.
Let us show (2)$\Rightarrow$(1).
We give a proof only for the case that $l\le r \le 0$; the proof for the cases that $l \le 0 \le r$ and $0 \le l\le r$ are similar.
Notice that $d_l(Y),d_r(Y)>0$.
By Lemma \ref{lemma:diagonal} (2), we have $(d_l(Y)-l,d_l(Y)),(d_r(Y)-r,d_r(Y)) \in Y$.
Also, by the adjacency condition, it follows that $d_l(Y) \le d_r(Y)$.
Because $d_l(Y') = d_l(Y)-1$ and $d_r(Y') = d_r(Y)-1$, we see by Lemma \ref{lemma:diagonal} (2) that $(d_l(Y)-l,d_l(Y)),(d_r(Y)-r,d_r(Y)) \notin Y'$, which implied that $(d_l(Y)-l+1,d_l(Y)),(d_r(Y)-r,d_r(Y)+1) \notin Y'$.
Because $d_k(Y') = d_k(Y)$ for $k < l$ and $r < k$, we deduce that for $i,j$ such that $j-i < l$ or $r < j-i$, $(i,j)\in Y$ if and only if $(i,j)\in Y'$.
Hence, $(d_l(Y)-l+1,d_l(Y)),(d_r(Y)-r,d_r(Y)+1) \notin Y$.

Let $h$ be the hook in $Y$ corresponding to the box $(d_r(Y)-r,d_l(Y))$.
Because $(d_l(Y)-l,d_l(Y)) \in Y$ and $(d_l(Y)-l+1,d_l(Y)) \notin Y$, the bottom box in the $d_l(Y)$-th column of $Y$ is $(d_l(Y)-l,d_l(Y))$.
Also, because $(d_r(Y)-r,d_r(Y)) \in Y$ and $(d_r(Y)-r,d_r(Y)+1) \notin Y$, the rightmost box in the $(d_r(Y)-r)$-th row of $Y$ is $(d_r(Y)-r,d_r(Y))$.
We see from \eqref{eq:1} that the diagonal expression of $Y\setminus h$ is
\[
(\ldots,d_{l-1}(Y),d_{l}(Y)-1,d_{l+1}(Y)-1,\ldots,d_{r-1}(Y)-1,d_{r}(Y)-1,d_{r+1}(Y),\ldots),
\]
which is equal to $\boldsymbol{d}(Y')$.
Thus we have proved the lemma. \qedhere 
\end{proof}

The next lemma follows from the proof of Lemma \ref{lemma:hook_and_DRep}.

\begin{lemma}\label{lemma:hook_size}
Let $Y \in \mathcal{F}(Y_{m,n})$, and $Y' = Y\setminus h_Y{(i,j)}$ for $(i,j)\in Y$.
Let $-m < l \le r < n$ be such that $\boldsymbol{d}(Y)\xrightarrow{l,r}\boldsymbol{d}(Y')$ in the diagonal expression.
Then, $\#(h_Y{(i,j)}) = \#\mathcal{A}(h_Y{(i,j)}) = r-l+1$.
\end{lemma}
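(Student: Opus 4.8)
The statement is a counting identity, so the plan is to count the boxes of the hook $h_Y(i,j)$ directly and match the result with $r-l+1$. The remark following the statement says the proof follows from the proof of Lemma \ref{lemma:hook_and_DRep}; concretely, I would reuse the explicit description of the endpoints obtained there, namely $l = j-i'$ and $r = j'-i$, where $(i',j)$ is the bottom box in the $j$-th column of $Y$ and $(i,j')$ is the rightmost box in the $i$-th row of $Y$ (so that $i'\ge i$ and $j'\ge j$).

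By Definition \ref{def:hook}, the hook decomposes as the disjoint union of the corner box $(i,j)$, the leg $\{(i'',j)\mid i<i''\le i'\}$, and the arm $\{(i,j'')\mid j<j''\le j'\}$. The cleanest bookkeeping is via the content $k=j-i$ of a box, i.e. its diagonal index: along the leg the content runs through $l,l+1,\ldots,(j-i)-1$, the corner has content $j-i$, and along the arm it runs through $(j-i)+1,\ldots,r$. Since each diagonal meets the hook in exactly one box, the contents of the boxes of $h_Y(i,j)$ are precisely $l,l+1,\ldots,r$, each occurring once, whence $\#(h_Y(i,j)) = r-l+1$. Equivalently, one may simply add up $1+(i'-i)+(j'-j)$ and substitute $l=j-i'$, $r=j'-i$ to obtain $(i'-i)+(j'-j)+1 = (j'-i)-(j-i')+1 = r-l+1$. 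The degenerate cases $i=i'$ (no leg) and $j=j'$ (no arm), including the single-box hook with $l=r=j-i$, are covered by the same formula.

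For the remaining equality I would invoke the definition of $\mathcal{A}$: the multiset $\mathcal{A}(h_Y(i,j))$ has exactly one entry for each box of $h_Y(i,j)$, counted with multiplicity, so its cardinality equals the number of boxes, giving $\#\mathcal{A}(h_Y(i,j)) = \#(h_Y(i,j)) = r-l+1$. It is worth stressing that this uses the multiset (with-multiplicity) reading of $\#\mathcal{A}$: under the unimodal numbering the box-number $\min\{k+m,\,n-k\}$ depends only on the content $k$ and is genuinely non-injective in $k$ (for instance, for $m=3$, $n=5$ the contents $0$ and $2$ both receive the number $3$), so the number of \emph{distinct} values appearing in $\mathcal{A}(h_Y(i,j))$ can be strictly smaller than $r-l+1$; the statement concerns the size of the multiset, not the count of distinct numbers.

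There is no substantive obstacle here: the whole argument is a one-line count once the geometric meaning of $l$ and $r$ is imported from Lemma \ref{lemma:hook_and_DRep}. The only points requiring care are, first, reading off $l=j-i'$ and $r=j'-i$ correctly in all three position cases $l\le r\le 0$, $l\le 0\le r$, and $0\le l\le r$ (the endpoint formulas are uniform, so this is automatic), and second, interpreting $\#\mathcal{A}$ as a multiset cardinality so that the middle equality is not conflated with a (false) injectivity claim for the unimodal numbering.
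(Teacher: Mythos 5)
Your proposal is correct and matches the paper's intended argument: the paper gives no separate proof, stating only that the lemma follows from the proof of Lemma \ref{lemma:hook_and_DRep}, which is precisely your step of importing $l=j-i'$, $r=j'-i$ from \eqref{eq:1} and observing that the hook meets each diagonal $k$ with $l\le k\le r$ in exactly one box. Your added remark that $\#\mathcal{A}$ must be read as a multiset cardinality (since the unimodal numbering is not injective on contents) is a correct and worthwhile clarification, not a deviation.
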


\begin{definition}\label{def:bulge}
Let $\boldsymbol{a} = (a_{-m},\ldots,a_{n}) \in \mathbb{N}_{0}^{m+n+1}$.
Assume that $(a_{k-1}, a_{k})$ satisfies the adjacency condition for some $-m<k\le n$.
If $(a_{k-1}-1, a_{k})$ (resp., $(a_{k-1}, a_{k}-1)$) also satisfies the adjacency condition, then we say that $(a_{k-1}, a_{k})$ is a left (resp., right) bulge, and we write $a_{k-1} \searrow a_{k}$ (resp., $a_{k-1} \nearrow a_{k}$).
\end{definition}

The following lemma can be easily verified.

\begin{lemma}\label{lemma:bulge}
Let $\boldsymbol{a} = (a_{-m},\ldots,a_{n}) \in \mathbb{N}_{0}^{m+n+1}$.
\begin{enumerate}
    \item[(1)] If $(a_{k-1}, a_{k})$ satisfies the adjacency condition, then $(a_{k-1}, a_{k})$ is either a left bulge or a right bulge.
    \item[(2)] Assume that $(a_{k-1}, a_{k})$ satisfies the adjacency condition.
    If $(a_{k-1}, a_{k})$ is a left bulge, then $(a_{k-1}-1, a_{k})$ is a right bulge.
    \item[(3)] Assume that $(a_{k-1}, a_{k})$ satisfies the adjacency condition.
    If $(a_{k-1}, a_{k})$ is a right bulge, then $(a_{k-1}, a_{k}-1)$ is a left bulge.
\end{enumerate}
\end{lemma}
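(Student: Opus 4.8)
The plan is to reduce each of the three notions involved --- the adjacency condition, left bulge, and right bulge --- to an elementary numerical condition on the single difference of the two consecutive entries $a_{k-1}$ and $a_{k}$, and then to read off all three assertions directly. The one point requiring attention is that the adjacency condition is defined asymmetrically on the two halves of the index range, so I would begin by splitting into the cases $-m < k \le 0$ and $0 < k \le n$.

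First I would record the translations, each of which is a one-line expansion of the defining inequality applied to the relevant shifted pair. For $-m < k \le 0$, the pair $(a_{k-1},a_{k})$ satisfies the adjacency condition iff $a_{k} - a_{k-1} \in \{0,1\}$; unwinding Definition \ref{def:bulge}, it is a left bulge iff $(a_{k-1}-1,a_{k})$ satisfies adjacency, i.e. iff $a_{k} - a_{k-1} = 0$, and it is a right bulge iff $a_{k} - a_{k-1} = 1$. Symmetrically, for $0 < k \le n$ the pair satisfies adjacency iff $a_{k-1} - a_{k} \in \{0,1\}$, it is a left bulge iff $a_{k-1} - a_{k} = 1$, and it is a right bulge iff $a_{k-1} - a_{k} = 0$.

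With these characterizations in hand, the three parts are immediate. For (1), a pair satisfying adjacency has its relevant difference equal to $0$ or $1$, and in either half exactly one of these two values names a left bulge and the other a right bulge, so the pair is one or the other (indeed, exactly one). For (2), if $(a_{k-1},a_{k})$ is a left bulge then in the case $k \le 0$ one has $a_{k} = a_{k-1}$, so the shifted pair $(a_{k-1}-1,a_{k})$ has difference $a_{k} - (a_{k-1}-1) = 1$, which is exactly the right-bulge condition; the case $k > 0$ is the mirror computation, where $a_{k-1} - a_{k} = 1$ gives $(a_{k-1}-1) - a_{k} = 0$. Part (3) is handled identically: for $k \le 0$ a right bulge gives $a_{k} - a_{k-1} = 1$, whence $(a_{k-1},a_{k}-1)$ has difference $(a_{k}-1) - a_{k-1} = 0$, the left-bulge condition, and the case $k > 0$ is again the mirror.

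I do not expect any genuine obstacle here: once the bulge conditions are rewritten as the equalities $a_{k} - a_{k-1} \in \{0,1\}$ (resp. $a_{k-1} - a_{k} \in \{0,1\}$), every claim is a trivial arithmetic check. The only place one can slip is in keeping the two halves straight and remembering that the bulge property of a modified pair such as $(a_{k-1}-1,a_{k})$ is still tested with the adjacency condition belonging to the \emph{same} index $k$; once that convention is fixed, the verification is routine.
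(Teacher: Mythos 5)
Your proof is correct: the case-by-case reduction of ``left bulge'' and ``right bulge'' to the equalities $a_k-a_{k-1}=0$ or $1$ (for $k\le 0$) and $a_{k-1}-a_k=1$ or $0$ (for $k>0$) is exactly the routine verification the paper omits, as it states only that the lemma ``can be easily verified.'' Since the paper gives no proof to compare against, there is nothing further to reconcile; your argument is the natural one and covers all three parts.
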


\begin{lemma}\label{lemma:bulge_hook}
Let $\boldsymbol{a} = (a_{-m},\ldots, a_n), \boldsymbol{a'} = (a'_{-m},\ldots, a'_n) \in \mathbb{D}_{m,n}$.
Assume that $\boldsymbol{a'} = \boldsymbol{a}_{[l,r]} \in \mathbb{D}_{m,n}$ for some $-m < l\le r < n$.
Then, $a_{l-1}\nearrow a_{l}, a_{r}\searrow a_{r+1}$ and $a'_{l-1}\searrow a'_{l}, a'_{r}\nearrow a'_{r+1}$.
Moreover, for $-m<k\le n$ with $k\not=l,r+1$, if $a_{k-1}\nearrow a_{k}$ (resp., $a_{k-1}\searrow a_{k}$), then $a'_{k-1}\nearrow a'_{k}$ (resp., $a'_{k-1}\searrow a'_{k}$).
\end{lemma}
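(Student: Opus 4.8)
The plan is to translate each bulge condition into an explicit condition on the consecutive difference $a_k - a_{k-1}$, and then read off all four assertions (and the ``moreover'' part) from how the operation $\boldsymbol{a} \mapsto \boldsymbol{a}_{[l,r]}$ affects those differences. The decisive leverage throughout is that \emph{both} $\boldsymbol{a}$ and $\boldsymbol{a'} = \boldsymbol{a}_{[l,r]}$ are assumed to lie in $\mathbb{D}_{m,n}$, so both satisfy the adjacency condition at every position; this is precisely what pins the boundary differences to their extreme values rather than to a mere one-sided inequality.

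First I would record a restatement of Definition \ref{def:bulge} obtained by intersecting the adjacency condition for $\boldsymbol{a}$ with that for the shifted pair. Fix a position $k$ at which $(a_{k-1}, a_k)$ satisfies the adjacency condition. If $-m < k \le 0$ (so adjacency reads $0 \le a_k - a_{k-1} \le 1$), then $(a_{k-1}, a_k)$ is a left bulge iff $a_k - a_{k-1} = 0$ and a right bulge iff $a_k - a_{k-1} = 1$; while if $0 < k \le n$ (so adjacency reads $0 \le a_{k-1} - a_k \le 1$), then it is a left bulge iff $a_{k-1} - a_k = 1$ and a right bulge iff $a_{k-1} - a_k = 0$. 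Each equivalence is immediate: for instance, on the right side a left bulge means $(a_{k-1}-1, a_k)$ also satisfies $0 \le (a_{k-1}-1) - a_k \le 1$, which together with $0 \le a_{k-1} - a_k \le 1$ forces $a_{k-1} - a_k = 1$. This is consistent with Lemma \ref{lemma:bulge}(1), the two alternatives being complementary.

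Next I would treat the two seams $k = l$ and $k = r+1$. At position $l$ we have $a'_{l-1} = a_{l-1}$ and $a'_l = a_l - 1$. Feeding the adjacency inequalities for $\boldsymbol{a}$ and for $\boldsymbol{a'}$ into the restatement pins the difference exactly: for $l \le 0$ it gives $a_l - a_{l-1} = 1$ and hence $a'_l - a'_{l-1} = 0$, that is $a_{l-1} \nearrow a_l$ and $a'_{l-1} \searrow a'_l$; for $l > 0$ it gives $a_{l-1} - a_l = 0$ and hence $a'_{l-1} - a'_l = 1$, yielding the same two bulge statements. The argument at $r+1$, where $a'_r = a_r - 1$ and $a'_{r+1} = a_{r+1}$, is identical and produces $a_r \searrow a_{r+1}$ and $a'_r \nearrow a'_{r+1}$. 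This disposes of the first sentence of the conclusion. For the ``moreover'' part I would note that the operation changes a difference only at these two seams: for any $k \ne l, r+1$ the indices $k-1$ and $k$ are either both inside $\{l, \ldots, r\}$ (both decremented) or both outside it (both untouched), so in either case $a'_k - a'_{k-1} = a_k - a_{k-1}$. Since the bulge type at $k$ depends only on this difference together with whether $k \le 0$ or $k > 0$ (unchanged), and since $\boldsymbol{a'} \in \mathbb{D}_{m,n}$ guarantees the bulge is defined for $\boldsymbol{a'}$ as well, the type is preserved.

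The computation is entirely mechanical, so there is no real obstacle; the only care required is bookkeeping the two sign regimes $k \le 0$ and $k > 0$ uniformly, and recognizing that at the seams the membership $\boldsymbol{a'} \in \mathbb{D}_{m,n}$ is exactly the hypothesis that upgrades the one-sided adjacency inequalities into the exact boundary equalities from which the four bulge assertions follow.
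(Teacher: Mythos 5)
Your proof is correct and follows essentially the same route as the paper: membership of $\boldsymbol{a'}=\boldsymbol{a}_{[l,r]}$ in $\mathbb{D}_{m,n}$ gives adjacency at the two seams, which is precisely the definition of the claimed bulge types for $\boldsymbol{a}$ there, and the unchanged differences $a_k-a_{k-1}=a'_k-a'_{k-1}$ for $k\neq l,r+1$ give the ``moreover'' part. The only presentational difference is that you inline the content of Definition \ref{def:bulge} and Lemma \ref{lemma:bulge} as explicit equalities on consecutive differences in the two sign regimes, where the paper simply cites Lemma \ref{lemma:bulge} (2) and (3).
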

\begin{proof}
Since $\boldsymbol{a'} = \boldsymbol{a}_{[l,r]} \in \mathbb{D}_{m,n}$, it follows that $(a_{l-1}, a_{l}-1)$ and $(a_{r}-1, a_{r+1})$ satisfy the adjacency condition.
Hence, $(a_{l-1}, a_{l})$ is a right bulge, and $(a_{r}, a_{r+1})$ is a left bulge.
By Lemma \ref{lemma:bulge} $(2)$ and $(3)$, $(a'_{l-1}, a'_{l})$ is a left bulge, and $(a'_{r}, a'_{r+1})$ is a right bulge.
By the definition of $\boldsymbol{a}_{[l,r]}$, we have $a_{k} - a_{k-1} = a'_{k} - a'_{k-1}$ for $-m<k\le n$ with $k\not=l,r+1$.
Hence, $(a_{l-1}, a_{l})$ and $(a'_{l-1}, a'_{l})$ are both left bulges or rights bulge.
Thus we have proved the lemma.\qedhere 
\end{proof}

Let $Y\in\mathcal{F}(Y_{m,n})$ be a Young diagram with the unimodal numbering $\alpha_{m,n}$.
By Remark \ref{remark:unimodal} (2), it holds that $\alpha_{m,n}(i',j') = \alpha_{m,n}(i'+a,j'+a)$ for all $(i',j')\in Y$ and $a\in\mathbb{N}$ such that $(i'+a,j'+a)\in Y$.
Hence we see that $\mathcal{A}(Y) = \mathcal{A}(Y\setminus h_Y{(i,j)}) \cup \mathcal{A}(h_Y{(i,j)})$ for $(i,j)\in Y$.

\begin{lemma}\label{lemma:DRep_C}
For $Y\in\mathcal{F}(Y_{m,n})$ and $1\le k \le \hat{\alpha}_{m,n} = \lfloor (n+m)/2 \rfloor$,
\[
\#\{x\in \mathcal{A}(Y)\mid x=k\} =
\begin{cases}
    d_{-m+k}+d_{n-k} & \text{if}\ -m+k\not=n-k,\\
    d_{-m+k} & \text{if}\ -m+k=n-k.
\end{cases}
\]
\end{lemma}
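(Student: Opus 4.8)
$$\text{Proof Plan for Lemma 3.14}$$

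The plan is to count, for each fixed value $k$ with $1 \le k \le \hat{\alpha}_{m,n}$, exactly which boxes of $Y$ carry the number $k$ under the unimodal numbering, and then express that count in terms of the diagonal data $d_{-m+k}$ and $d_{n-k}$. The key observation is that the unimodal numbering $\alpha_{m,n}(i,j) = \min\{j-i+m,\, i-j+n\}$ depends on $(i,j)$ only through the diagonal index $j-i$; indeed, by Remark \ref{remark:unimodal} (2), boxes along a fixed diagonal $j-i = c$ all share the same number. So the whole computation should factor through the diagonals.

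First I would determine, for a box on the diagonal $j - i = c$, what its number is. If $c \ge 0$ then $j - i + m \le i - j + n$ is equivalent to $c \le (n-m)/2$, and analyzing the two branches of the $\min$ shows that $\alpha_{m,n} = c + m$ precisely when the box lies on a diagonal with $c + m \le n - c$, i.e. $c = -m + k$ gives the number $k$ on the ``lower'' family of diagonals, while $\alpha_{m,n} = n - c$, i.e. $c = n - k$ gives the number $k$ on the ``upper'' family. Thus the boxes numbered $k$ lie on exactly two diagonals, $j - i = -m + k$ and $j - i = n - k$, and the number of such boxes on these diagonals is $d_{-m+k}(Y)$ and $d_{n-k}(Y)$ respectively, by the very definition $d_c = \#\{(i,j)\in Y \mid j - i = c\}$.

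Next I would handle the coincidence of these two diagonals. The two diagonal indices agree exactly when $-m + k = n - k$, that is $k = (n+m)/2$; this can occur only when $n + m$ is even, and then it happens precisely at the top value $k = \hat{\alpha}_{m,n}$. In that degenerate case the single diagonal $j - i = -m+k = n-k$ should not be double-counted, which explains the second case of the formula where only $d_{-m+k}$ appears. When $-m+k \ne n-k$ the two diagonals are genuinely distinct, so the counts simply add, giving $d_{-m+k} + d_{n-k}$. Since $\mathcal{A}(Y)$ is the multiset of all numbers appearing in $Y$, the quantity $\#\{x \in \mathcal{A}(Y) \mid x = k\}$ is exactly the number of boxes of $Y$ carrying the value $k$, completing the identification.

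I expect the only subtle point to be verifying that the value $k$ is attained on \emph{exactly} the two diagonals $-m+k$ and $n-k$ and on no others, i.e.\ checking the range conditions $1 \le -m+k$ on the lower family, $n - k < n$ on the upper family, and confirming that each diagonal whose index lies outside $[-m, n]$ contributes $d_c = 0$ (as already noted after the definition of $d_k$). This bookkeeping is routine once the min is resolved into its two branches, so the main work is the case analysis distinguishing $-m+k \ne n-k$ from $-m+k = n-k$; none of this requires any result beyond Remark \ref{remark:unimodal} and the definition of the $d_k$.
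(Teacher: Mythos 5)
Your proposal is correct and follows essentially the same route as the paper: the paper's proof likewise rewrites $\#\{x\in\mathcal{A}(Y)\mid x=k\}$ as the number of boxes on the two diagonals $j-i=-m+k$ and $j-i=n-k$ and then splits into the cases where these diagonals are distinct or coincide. Your additional verification that the value $k$ is attained on exactly those two diagonals (by resolving the $\min$ into its two branches) is the step the paper leaves implicit, and it checks out.
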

\begin{proof}
Assume that $-m+k\not=n-k$.
Then we compute
\begin{align*}
&\#\{x\in \mathcal{A}(Y)\mid x=k\} = \#\{(i,j)\in Y\mid j-i = -m+k\ \text{or}\ n-k\} \\
=\ & \#\{(i,j)\in Y\mid j-i = -m+k\} +\#\{(i,j)\in Y\mid j-i = n-k\} \\
=\ & d_{-m+k}+d_{n-k}.
\end{align*}
The proof of the case that $-m+k=n-k$ is similar.
Thus we have proved the lemma.\qedhere 

\end{proof}

\begin{lemma}\label{lemma:same_number_hook}
Let $Y \in \mathcal{F}(Y_{m,n})$, and $Y' = Y\setminus h_Y{(i,j)}$ for $(i,j)\in Y$.
Let $-m < l \le r < n$ be such that $\boldsymbol{d}(Y)\xrightarrow{l,r}\boldsymbol{d}(Y')$ in the diagonal expression (see \eqref{eq:1}).
Assume that there exists $(i',j') \in Y'$ such that $\mathcal{A}(h_{Y'}(i',j')) = \mathcal{A}(h_Y{(i,j)})$.
Set $Y'' = Y'\setminus h_{Y'}(i',j')$.
Then, $\boldsymbol{d}(Y')\xrightarrow{n-m-r,n-m-l}\boldsymbol{d}(Y'')$ in the diagonal expression.
Also, there exists no box $(i'',j'') \in Y''$ such that $\mathcal{A}(h_{Y''}(i'',j'')) = \mathcal{A}(h_Y{(i,j)})$.
\end{lemma}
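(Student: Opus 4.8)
The plan is to translate everything into the diagonal expression and to exploit the reflective symmetry of the unimodal numbering. Write $\nu(k):=\min\{k+m,\,n-k\}$ for the common number carried by every box on the diagonal $\{(i,j):j-i=k\}$. By Lemma~\ref{lemma:hook_and_DRep} the hook removal $\boldsymbol{d}(Y)\xrightarrow{l,r}\boldsymbol{d}(Y')$ takes exactly one box off each diagonal $k$ with $l\le k\le r$, so $\mathcal{A}(h_Y{(i,j)})=\{\!\{\nu(k)\mid l\le k\le r\}\!\}$, and likewise for $h_{Y'}(i',j')$. The function $\nu$ is a symmetric tent: it is strictly increasing on $k\le c$ (where $\nu(k)=k+m$) and strictly decreasing on $k\ge c$ (where $\nu(k)=n-k$), with centre $c:=(n-m)/2$ and maximum $\hat{\alpha}_{m,n}=\lfloor (n+m)/2\rfloor$. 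In particular $\nu(k)=\nu(n-m-k)$, and each value is attained on exactly the two diagonals $\{t-m,\,n-t\}$, a reflection pair summing to $n-m=2c$ (which coincide only at the peak, when $n-m$ is even; cf. Lemma~\ref{lemma:DRep_C}). Consequently reflecting an interval about $c$ sends $[l,r]$ to $[n-m-r,\,n-m-l]$ and preserves the $\nu$-multiset.

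The combinatorial heart is the converse: if $\{\!\{\nu(k)\mid l'\le k\le r'\}\!\}=\{\!\{\nu(k)\mid l\le k\le r\}\!\}$, then $[l',r']=[l,r]$ or $[l',r']=[n-m-r,\,n-m-l]$. (The two multisets having equal size forces $r'-l'=r-l$, consistent with Lemma~\ref{lemma:hook_size}.) I would prove this by induction on the length, using that, $\nu$ being unimodal, the minimum value of the multiset is attained at an endpoint of the interval; since a value occurs only on a reflection pair of diagonals, matching this minimum identifies the corresponding endpoint of $[l',r']$ as one of two reflected choices, after which removing it reduces to a shorter interval and the induction hypothesis applies. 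Equivalently, one may compare the superlevel counts $\#\{k\in[l,r]:\nu(k)\ge t\}=\bigl|[l,r]\cap[t-m,\,n-t]\bigr|$ against the family of windows $[t-m,n-t]$, which are symmetric about $c$. The careful treatment of the peak -- a single diagonal when $n-m$ is even, two adjacent diagonals when $n-m$ is odd -- is the one delicate point, and this is where I expect the main work to lie.

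Granting the claim, the hook $h_{Y'}(i',j')$ corresponds by Lemma~\ref{lemma:hook_and_DRep} to some $\boldsymbol{d}(Y')\xrightarrow{l',r'}\boldsymbol{d}(Y'')$, and the hypothesis $\mathcal{A}(h_{Y'}(i',j'))=\mathcal{A}(h_Y{(i,j)})$ forces $[l',r']\in\{[l,r],\,[n-m-r,\,n-m-l]\}$. To remove the first option, recall that a hook removal on diagonals $[a,b]$ from a diagram $Z$ requires $d_{a-1}(Z)\nearrow d_a(Z)$ and $d_b(Z)\searrow d_{b+1}(Z)$. Lemma~\ref{lemma:bulge_hook} applied to $\boldsymbol{d}(Y)\xrightarrow{l,r}\boldsymbol{d}(Y')$ gives $d_{l-1}(Y')\searrow d_l(Y')$; since, given the adjacency condition, a pair is either a left bulge or a right bulge but never both (the two cases being distinguished by whether the relevant consecutive difference is $0$ or $1$), the position $l$ is a left bulge in $Y'$, so no hook starting at $l$ can be removed from $Y'$. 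Hence $[l',r']=[n-m-r,\,n-m-l]$, which is the asserted $\boldsymbol{d}(Y')\xrightarrow{n-m-r,\,n-m-l}\boldsymbol{d}(Y'')$.

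For the final statement, the claim shows that any box $(i'',j'')\in Y''$ with $\mathcal{A}(h_{Y''}(i'',j''))=\mathcal{A}(h_Y{(i,j)})$ would give a hook on $[l,r]$ or on $[n-m-r,\,n-m-l]$, so both must be excluded in $Y''$. The reflected interval is excluded just as before: Lemma~\ref{lemma:bulge_hook} applied to $\boldsymbol{d}(Y')\xrightarrow{n-m-r,\,n-m-l}\boldsymbol{d}(Y'')$ makes its left endpoint $n-m-r$ a left bulge in $Y''$. For $[l,r]$ itself, write $\delta:=(n-m)-(l+r)$, so that $[n-m-r,\,n-m-l]=[l+\delta,\,r+\delta]$ is the translate of $[l,r]$ by $\delta$. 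If $\delta>0$ the indices $l-1,l$ lie strictly below the reflected interval, so passing from $Y'$ to $Y''$ leaves $d_{l-1}$ and $d_l$ unchanged and the blocking left bulge at $l$ persists; if $\delta<0$ the symmetric argument keeps $d_r,d_{r+1}$ unchanged, so the right bulge at $r+1$ forced in $Y'$ persists and again blocks removal; and $\delta=0$ is impossible under the hypothesis, for then the reflection equals $[l,r]$, which is not removable from $Y'$, contradicting the existence of $(i',j')$. In all cases no hook on $[l,r]$ exists in $Y''$, and the proof is complete.
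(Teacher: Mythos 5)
Your proposal follows essentially the same route as the paper: translate both removals into intervals of diagonals, use the tent function $\nu=\mu$ to constrain which intervals can carry the given multiset, and use the bulge dichotomy of Lemma \ref{lemma:bulge} together with Lemma \ref{lemma:bulge_hook} to exclude the non-reflected interval and to prove the final non-existence claim. The one place where you defer the work --- the claim that $\{\!\{\nu(k)\mid l'\le k\le r'\}\!\}=\{\!\{\nu(k)\mid l\le k\le r\}\!\}$ forces $[l',r']\in\{[l,r],[n-m-r,n-m-l]\}$ --- is exactly where the paper's main effort lies, and your induction sketch is not yet a proof: after matching the minimum you get four candidate identifications of an endpoint, the case $\nu(l)=\nu(r)$ (interval symmetric about $c$) and the peak need separate treatment, and none of this is carried out. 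The paper closes this step more directly and you should be aware that no induction is needed: since Lemma \ref{lemma:hook_size} already gives $r'-l'=r-l$, one may assume (up to symmetry) $\mu(l)<\mu(r)$, deduce $l<n-m-l$ and $\min\mathcal{A}(h)=\mu(l)=l+m$, and then check the three positions of $a:=l'$ relative to $l$ and $n-m-r$: if $l<a<n-m-r$ every value on $[a,a+r-l]$ exceeds $\mu(l)$, so $\mu(l)\notin\mathcal{A}(h')$; if $a<l$ then $\mu(a)<\mu(l)=\min\mathcal{A}(h)$; if $a>n-m-r$ then $\mu(a+r-l)<\mu(l)$. Each case contradicts $\mathcal{A}(h')=\mathcal{A}(h)$, so $a\in\{l,n-m-r\}$, and the bulge argument (which you do give correctly) eliminates $a=l$.

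Two smaller remarks. First, your exclusion of $[l,r]$ as the second removal implicitly uses that a pair satisfying the adjacency condition cannot be both a left and a right bulge; this is how the paper reads its Lemma \ref{lemma:bulge} (1), so you are consistent with it, but it is worth saying explicitly since the lemma's literal wording is only an ``either/or''. Second, your treatment of the last assertion via the translation parameter $\delta=(n-m)-(l+r)$ --- showing that for $\delta>0$ the left bulge at $l$ survives the passage from $Y'$ to $Y''$, for $\delta<0$ the right bulge at $r+1$ survives, and $\delta=0$ contradicts the existence of $(i',j')$ --- is correct and is in fact a cleaner organization than the paper's case analysis at this point. So the verdict is: right approach, correct where complete, but the central multiset-rigidity claim is asserted rather than proved and must be supplied (the minimum-comparison argument above is the shortest way to do it).
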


\begin{example}\label{ex:same_number_hook}
Let $Y$ be as in Example \ref{ex:DRep} (note that $m = 3$ and $n = 5$), and set $Y' = Y\setminus h_{Y}(1,3)$.
Then we have $\mathcal{A}(h_{Y}(1,3))=\{\!\{3,4,3,2,1\}\!\}$.
Notice that $\mathcal{A}(h_{Y'}(1,1))=\{\!\{1,2,3,4,3\}\!\}=\mathcal{A}(h_{Y}(1,3))$.
We set $Y'' = Y'\setminus h_{Y'}(1,1)$.
\begin{center}
$Y=\ $\begin{ytableau}
    3&4&*(gray)3&*(gray)2&*(gray)1\\
    2&3&*(gray)4&3\\
    1&2&*(gray)3
\end{ytableau}
\qquad$\to$\qquad
$Y'=\ $\begin{ytableau}
    *(gray)3&*(gray)4&*(gray)3\\
    *(gray)2&3\\
    *(gray)1&2
\end{ytableau}
\qquad$\to$\qquad
$Y''=\ $\begin{ytableau}
    3\\
    2
\end{ytableau}
\end{center}
In this case, 
\[\boldsymbol{d}(Y) = (0,1,2,\dot{3},2,2,1,1,0),\]
\[\boldsymbol{d}(Y') = (0,1,2,\underline{\dot{2},1,1,0,0},0),\]
\[\boldsymbol{d}(Y'') = (0,\underline{0,1,\dot{1},0,0},0,0,0),\]
and hence $\boldsymbol{d}(Y)\xrightarrow{0,4}\boldsymbol{d}(Y')\xrightarrow{-2,2}\boldsymbol{d}(Y'')$, where $-2 = 5-3-4$ and $2 = 5-3-0$.
\end{example}

\begin{proof}[Proof of Lemma \ref{lemma:same_number_hook}]
We set $h:= h_Y{(i,j)}$ and $h':= h_{Y'}(i',j')$ for simplicity of notation.
Because $Y'' = Y'\setminus h'$, we see by \eqref{eq:1} that $\boldsymbol{d}(Y') \xrightarrow{l',r'} \boldsymbol{d}(Y'')$ for some $-m < l' \le r' < n$; we will show that $l' = m-n-r$ and $r' = n-m-l$.
Since $\mathcal{A}(h')=\mathcal{A}(h)$, and since $\boldsymbol{d}(Y)\xrightarrow{l,r}\boldsymbol{d}(Y')$, we have $\#\mathcal{A}(h') = \#\mathcal{A}(h) = r-l+1$ by Lemma \ref{lemma:hook_size}.
Hence we see that $\boldsymbol{d}(Y')\xrightarrow{a,a+r-l}\boldsymbol{d}(Y'')$ for some $a\in \mathbb{Z}$;
it suffices to show that $a = n-m-r$.

First, suppose, for a contradiction, that $a = l$; 
note that $\boldsymbol{d}(Y')\xrightarrow{l,r}\boldsymbol{d}(Y'')$ in this case.
Hence we see by Lemma \ref{lemma:bulge_hook} that $d_{l-1}(Y') \nearrow d_{l}(Y')$.
Similarly, since $\boldsymbol{d}(Y)\xrightarrow{l,r}\boldsymbol{d}(Y')$, it follows from Lemma \ref{lemma:bulge_hook} that $d_{l-1}(Y') \searrow d_{l}(Y')$.
Thus we get $d_{l-1}(Y') \nearrow d_{l}(Y')$ and $d_{l-1}(Y') \searrow d_{l}(Y')$, which contradicts Lemma \ref{lemma:bulge} (1).

Next, suppose, for a contradiction, that $a \not = l, n-m-r$.
For $k\in\mathbb{Z}$, We define $\mu(k) := \min\{k+m,-k+n\}$.
Since $\boldsymbol{d}(Y')\xrightarrow{a,a+r-l}\boldsymbol{d}(Y'')$, we have $\mathcal{A}(h')=\{\!\{\ \alpha_{m,n}(i',j')\mid (i',j')\in h'\}\!\}=\{\!\{\ \min\{j'-i'+m,i'-j'+n\}\mid (i',j')\in h'\}\!\}=\{\!\{\ \mu(k)\mid a\le k\le a+r-l\}\!\}$.
Note that $\min \mathcal{A}(h)=\min \{\!\{\ \min\{j'-i'+m,i'-j'+n\}\mid (i',j')\in h\}\!\} = \min\{\min\{l+m,l+n\},\min\{r+m,r+n\}\} = \min\{\mu(l),\mu(r)\}$. 
We give a proof only for the case that that $\mu(l) < \mu(r)$; the proofs for the cases that $\mu(l) = \mu (r)$ and $\mu(l) > \mu (r)$ are similar. 
If $l \ge n-m-l$, then 
\begin{align*}
    \mu(l) &= \min\{l+m,-l+n\} = m+\min\{l,-l+n-m\} = n-l \\
    &\ge \min\{r+m,(n-l)+\underbrace{(l-r)}_{\le 0}\} = \min\{r+m,-r+n\} = \mu(r),
\end{align*}
which is a contradiction.
Hence we get $l < n-m-l$ and $\mu(l)=\mu(n-m-l) = l+m$.
If $l < a < n-m-r$, then $a+r-l < n-m-l$.
Then, we have $\mu(b) = \min\{b+m,-b+n\} = \min\{a+m,-a-r+l+n\} > \min\{l+m,-n+m+l+n\} = l+m = \mu(l)$ for $a \le b \le a+r-l$.
Since $\mathcal{A}(h') =\{\!\{\mu(k)\mid a\le k \le a+r-l\}\!\}$, it follows that $\mu(l)\in \mathcal{A}(h)$ is not contained in $\mathcal{A}(h')$, which is a contradiction.
If $a < l$, then $a+m < l+m < n-m-l+m < -a+n$ and $\mu(a) = \min\{a+m,-a+n\} = a+m < l+m = \mu(l) = \min\mathcal{A}(h)$.
hence we get $\mu(a)\notin\mathcal{A}(h)$, which is a contradiction.
If $n-m-r < a$, then $a+r-l+m > -l+n > l+m > -a-r+l+n$ and $\mu(a+r-l) = \min\{a+r-l+m,-a-r+l+n\} = -a-r+l+n < l+m = \mu(l) = \min\mathcal{A}(h)$.
hence we get $\mu(a+r-l)\notin\mathcal{A}(h)$, which is a contradiction.
Thus we obtain $a = n-m-r$, as desired.

Suppose, for a contradiction, that there exists a box $(i'',j'') \in Y''$ such that $\mathcal{A}(h'') = \mathcal{A}(h)$, where $h'' := h_{Y''}(i'',j'')$; note that $\mathcal{A}(h'') = \mathcal{A}(h')$.
Since $\boldsymbol{d}(Y')\xrightarrow{n-m-r,n-m-l}\boldsymbol{d}(Y'')$, it follows by the argument above that $\boldsymbol{d}(Y''\setminus h'')$ is equal to $\boldsymbol{d}(Y'')_{[l,r]}$ or $\boldsymbol{d}(Y'')_{[n-m-r,n-m-l]}$.

If $\boldsymbol{d}(Y''\setminus h'') = \boldsymbol{d}(Y'')_{[l,r]}$, then we see by Lemma \ref{lemma:bulge_hook} that $d_{l-1}(Y'') \nearrow d_{l}(Y'')$ and $d_{r}(Y'') \searrow d_{r+1}(Y'')$.
Similarly, since $\boldsymbol{d}(Y)\xrightarrow{l,r}\boldsymbol{d}(Y')$, it follows from Lemma \ref{lemma:bulge_hook} that $d_{l-1}(Y') \searrow d_{l}(Y')$ and $d_{r}(Y') \nearrow d_{r+1}(Y')$.
Note that $\boldsymbol{d}(Y')\xrightarrow{n-m-r,n-m-l}\boldsymbol{d}(Y'')$.
If $l = n-m-l+1$, then $r \ge l = n-m-l+1 \ge n-m-r+1 > n-m-r-1$.
Thus we see by Lemma \ref{lemma:bulge_hook} that $d_{l-1}(Y'') \searrow d_{l}(Y'')$ or $d_{r}(Y'') \nearrow d_{r+1}(Y'')$.
Thus we get $d_{l-1}(Y'') \nearrow d_{l}(Y'')$ and $d_{l-1}(Y'') \searrow d_{l}(Y'')$, or $d_{r}(Y'') \searrow d_{r+1}(Y'')$ and $d_{r}(Y'') \nearrow d_{r+1}(Y'')$, which contradicts Lemma \ref{lemma:bulge} (1).

If $\boldsymbol{d}(Y''\setminus h'') = \boldsymbol{d}(Y'')_{[n-m-r,n-m-l]}$, then we see by Lemma \ref{lemma:bulge_hook} that $d_{n-m-r-1}(Y'') \nearrow d_{n-m-r}(Y'')$.
Similarly, since $\boldsymbol{d}(Y')\xrightarrow{n-m-r,n-m-l}\boldsymbol{d}(Y'')$, it follows from Lemma \ref{lemma:bulge_hook} that $d_{n-m-r-1}(Y'') \searrow d_{n-m-r}(Y'')$.
Thus we get $d_{n-m-r-1}(Y'') \nearrow d_{n-m-r}(Y'')$ and $d_{n-m-r-1}(Y'') \searrow d_{n-m-r}(Y'')$, which contradicts Lemma \ref{lemma:bulge} (1).\qedhere 
\end{proof}


\section{$\mathcal{G}$-values of Multiple Hook Removing Game whose starting position is $Y_{m,n}$}

\subsection{Multiple Hook Removing Game whose starting position is $Y_{m,n}$}

Let and fix $m,n\in\mathbb{N}$.
Recall that the boxes in $Y\in\mathcal{F}(Y_{m,n})$ are numbered by the unimodal numbering $\alpha_{m,n}:Y\to \mathbb{N}$, where $\alpha_{m,n}(i,j)=\min\{j-i+m, i-j+n\} \in \mathbb{N}$ for $(i,j) \in Y$.
We denote MHRG (see Section 1.1) whose starting position is the rectangular Young diagram $Y_{m,n}$ (with the unimodal numbering $\alpha_{m,n}$) by MHRG$(m,n)$;
it can be easily shown that MHRG$(m,n)$ is isomorphic to MHRG$(n,m)$;
in what follow, we assume that $m\le n$.
Let $\mathcal{T}(Y_{m,n})$ be the subset of $\mathcal{F}(Y_{m,n})$ consisting of all $Y\in\mathcal{F}(Y_{m,n})$ such that there exists a transition from $Y_{m,n}$ to $Y$, that is, $\mathcal{T}(Y_{m,n}) = \mathcal{C}(\text{MHRG}(m,n))$.

\begin{remark}[see Theorem \ref{th:1n} and Lemma \ref{lemma:2n_transition} below; see also \cite{Motegi}]\label{re:t_not=_f}
In general, it does not hold that $\mathcal{T}(Y_{m,n}) = \mathcal{F}(Y_{m,n})$. 
\end{remark}

\begin{remark}
We see by Lemma \ref{lemma:same_number_hook} that in MHRG$(m,n)$, the operation (M4b) is performed at most once, and the operation (M4c) is never performed.
\end{remark}

Let $Y\in \mathcal{T}(Y_{m,n})$, and $Y'\in \mathcal{O}(Y)$.
By Lemmas \ref{lemma:hook_and_DRep} and \ref{lemma:same_number_hook}, there exists $-m<l\le r<n$ such that 
\[
\boldsymbol{d}(Y)\xrightarrow{l,r}\boldsymbol{d}(Y'),
\]
or there exist $-m<l\le r<n$ and $Y''\in \mathcal{F}(Y_{m,n})$ such that  
\[
\boldsymbol{d}(Y)\xrightarrow{l,r}\boldsymbol{d}(Y'')\xrightarrow{n-m-r,n-m-l}\boldsymbol{d}(Y').
\]

It can be easily shown that MHRG$(m,n)$ is isomorphic to MHRG$(n,m)$;
in what follow, we assume that $m\le n$.


\subsection{Relation between MHRG$(m,n)$ and MHRG$(m,n+1)$}

Fix $m,n\in\mathbb{N}$ with $m\le n$.
Assume that $m+n$ is even.
We define $c := (n-m)/2$;
note that $c$ is a non-negative integer.
We will prove that MHRG$(m,n)$ is isomorphic to MHRG$(m,n+1)$.

\begin{definition}\label{def:index_cor}
We define the map $E:\mathbb{N}_0^{m+n+1} \to \mathbb{N}_0^{m+n+2}$ as follows.
If $\boldsymbol{a} \in \mathbb{N}_0^{m+n+1}$ is
\[
\boldsymbol{a} = (a_{-m}, \ldots , a_{c-1}, \underbrace{a_{c}}_{c\text{-th}}, a_{c+1},  \ldots ,a_{n}),
\]
then
\[
E(\boldsymbol{a}) := (a_{-m}, \ldots , a_{c-1}, \underbrace{a_{c}}_{c\text{-th}}, \underbrace{a_{c}}_{(c+1)\text{-th}}, a_{c+1},  \ldots ,a_{n}).
\]
\end{definition}

It can be easily verified that 
\begin{equation}\label{eq:aEa}
\boldsymbol{a} \in \mathbb{D}_{m,n} \text{ if and only if } E(\boldsymbol{a}) \in \mathbb{D}_{m,n+1}. 
\end{equation}
Hence the map $E:\mathbb{N}_0^{m+n+1} \to \mathbb{N}_0^{m+n+2}$ induces the map $E:\mathcal{F}(Y_{m,n}) \to \mathcal{F}(Y_{m,n+1})$ as follows.
For $Y \in \mathcal{F}(Y_{m,n})$, we define $E(Y)$ to the (unique) element of ${F}(Y_{m,n+1})$ whose diagonal expression is
\[
E(\boldsymbol{d}(Y)) = (d_{-m}(Y), \ldots , d_{c-1}(Y), d_{c}(Y), d_{c}(Y), d_{c+1}(Y),  \ldots ,d_{n-1}(Y));
\]
note that $\boldsymbol{d}(E(Y)) = E(\boldsymbol{d}(Y))$.
Notice that $E:\mathcal{F}(Y_{m,n}) \to \mathcal{F}(Y_{m,n+1})$ is an injection.

\begin{theorem}\label{th:game_cor}
Let $m,n\in\mathbb{N}$ be such that $m\le n$ and $m+n$ is even.
Then the map $E$ gives an isomorphism from MHRG$(m,n)$ to MHRG$(m,n+1)$.
Therefore, for each $Y\in \mathcal{T}(Y_{m,n})$, it holds that $\mathcal{G}(Y) = \mathcal{G}(E(Y))$. In particular, $\mathcal{G}(Y_{m,n})$ in MHRG$(m,n)$ is equal to $\mathcal{G}(Y_{m,n+1})$ in MHRG$(m,n+1)$.
\end{theorem}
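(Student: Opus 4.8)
The plan is to verify the two conditions of Definition \ref{def:game_iso} for $f=E$: that $E$ restricts to a bijection $\mathcal{C}(\mathrm{MHRG}(m,n))=\mathcal{T}(Y_{m,n})\to\mathcal{T}(Y_{m,n+1})=\mathcal{C}(\mathrm{MHRG}(m,n+1))$, and that it intertwines options, $\mathcal{O}(E(Y))=E(\mathcal{O}(Y))$. Once this is established, the equality of $\mathcal{G}$-values is immediate from Proposition \ref{prop:game_iso}, and the statement about the starting positions follows since $E(Y_{m,n})=Y_{m,n+1}$. I would first record this last fact at the level of diagonal expressions: by \eqref{eq:aEa} the map $E$ sends $\mathbb{D}_{m,n}$ into $\mathbb{D}_{m,n+1}$, and computing $\boldsymbol{d}(Y_{m,n})$ and $\boldsymbol{d}(Y_{m,n+1})$ (both are symmetric unimodal sequences of the form $d_k=\min\{m,m+k,n-k\}$, resp.\ with $n$ replaced by $n+1$) shows $E(\boldsymbol{d}(Y_{m,n}))=\boldsymbol{d}(Y_{m,n+1})$. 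The whole problem is thereby transported, through the bijection of Proposition \ref{prop:DRep} and Lemmas \ref{lemma:hook_and_DRep} and \ref{lemma:same_number_hook}, to a purely combinatorial statement about decrement operations on diagonal expressions; the option correspondence is the crux, and the bijection of position sets then follows by induction on the game tree starting from $Y_{m,n}\leftrightarrow Y_{m,n+1}$.

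The organizing principle is the reflection $k\mapsto n-m-k=2c-k$ about the central diagonal $k=c$, which by Remark \ref{remark:unimodal} is exactly the diagonal where $\alpha_{m,n}$ attains its maximum $\hat{\alpha}_{m,n}=(m+n)/2$; since $\mu(n-m-k)=\mu(k)$ for $\mu(k):=\min\{k+m,n-k\}$, this reflection preserves the unimodal numbering. Lemma \ref{lemma:same_number_hook} says that the only hook which can share the multiset of numbers of a given hook on $[l,r]$ is its reflection on $[2c-r,2c-l]$. Consequently every legal move of $\mathrm{MHRG}(m,n)$ has net effect, on the diagonal expression, a decrement pattern symmetric about $c$: either a single self-symmetric hook ($l+r=2c$), or a hook together with its reflected partner (disjoint, or overlapping around $c$ as in Example \ref{ex:same_number_hook}). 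Because $Y_{m,n}$ itself is symmetric about $c$, this shows inductively that every position of $\mathcal{T}(Y_{m,n})$ is symmetric about $c$, and from such a position a non-symmetric hook choice is always completed, via (M4b), to a symmetric double move. The same description governs $\mathrm{MHRG}(m,n+1)$ with the reflection $k\mapsto(n+1)-m-k=(2c+1)-k$ about the half-integer center $c+\tfrac12$, whose symmetric positions are exactly those whose $c$-th and $(c+1)$-th components are equal. Since $E$ duplicates precisely the central component $a_c$, it carries symmetric-about-$c$ data bijectively to symmetric-about-$(c+\tfrac12)$ data, and the flat central pair $(a_c,a_c)$ it creates is simultaneously a left and a right bulge in the sense of Definition \ref{def:bulge}, which is what allows a hook to straddle the new center evenly.

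For the option correspondence I would use the index map $\phi\colon\{-m,\ldots,n\}\to\{-m,\ldots,n+1\}$ with $\phi(k)=k$ for $k\le c$ and $\phi(k)=k+1$ for $k\ge c+1$, and check case by case that a symmetric decrement pattern and its $\phi$-image decrement the same boxes after duplication: (i) for an off-center reflected pair $[l,r]\sqcup[2c-r,2c-l]$ with $r<c$ one has $\phi([2c-r,2c-l])=[2c+1-r,2c+1-l]$, which is exactly the reflection of $[l,r]$ about $c+\tfrac12$, so a double move maps to a double move; (ii) a central self-symmetric hook $[l,r]$ with $l+r=2c$ maps to the self-symmetric hook $[l,r+1]$ about $c+\tfrac12$, a single move mapping to a single move; (iii) an overlapping straddling pair maps to an overlapping straddling pair with the central decrement duplicated. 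In each case realizability transfers because, by \eqref{eq:aEa} and Lemma \ref{lemma:bulge_hook}, $E$ preserves the adjacency and bulge conditions at the interval endpoints that characterize genuine hook removals (Lemma \ref{lemma:hook_and_DRep}); conversely, every move of $\mathrm{MHRG}(m,n+1)$ from $E(Y)$ changes the $c$- and $(c+1)$-components equally, and hence descends through $E$ to a move of $\mathrm{MHRG}(m,n)$.

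I expect the main obstacle to be exactly the behavior at the central diagonal. The conjugation by $\phi$ fails to intertwine the two reflections precisely at $k=c$, because the reflection of $\mathrm{MHRG}(m,n+1)$ swaps the two copies $c$ and $c+1$ while $\phi$ retains only one of them; so hooks whose interval meets the central diagonal cannot be handled by the generic index-shift argument and require the separate analysis in cases (ii) and (iii). In particular, one must confirm that a self-symmetric hook about $c$ produces no same-number partner in $\mathrm{MHRG}(m,n)$ precisely when its image about $c+\tfrac12$ produces none in $\mathrm{MHRG}(m,n+1)$, and that overlapping reflected pairs (the double moves illustrated in Example \ref{ex:same_number_hook}) remain legal and correspond bijectively after duplication of the center. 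Tracking the left/right bulge conditions of Lemma \ref{lemma:bulge} across the inserted flat central pair is the delicate bookkeeping on which the argument turns.
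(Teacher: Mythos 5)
Your overall architecture (pass to diagonal expressions, exploit the reflection $k\mapsto n-m-k$ from Lemma \ref{lemma:same_number_hook}, prove that $E$ intertwines options, and then obtain the bijection of position sets by induction along transitions from $Y_{m,n}\leftrightarrow Y_{m,n+1}$) matches the paper's, and your treatment of the off-center reflected pairs via the index map $\phi$ is essentially the paper's Lemmas \ref{lemma:e_lr}--\ref{lemma:step_cor}. However, there is a genuine gap at the step you yourself describe as the foundation: the claim that every position of $\mathcal{T}(Y_{m,n})$ is symmetric about $c$, equivalently that a non-self-symmetric hook choice is \emph{always} completed via (M4b) to a symmetric double move. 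This is false for $m<n$. Rule (M4b) forces the partner hook on $[n-m-r,n-m-l]$ to be removed only when that partner actually exists in $Y'$; when $\boldsymbol{d}(Y')_{[n-m-r,n-m-l]}\notin\mathbb{D}_{m,n}$, the single asymmetric removal is itself a complete legal move. Concretely, in MHRG$(1,3)$ one has $c=1$ and $\boldsymbol{d}(Y_{1,3})=(0,\dot{1},1,1,0)$; removing the hook of the box $(1,3)$ (with $\mathcal{A}(h)=\{\!\{1\}\!\}$) gives $Y'=(2)$ with $\boldsymbol{d}(Y')=(0,\dot{1},1,0,0)$, and the candidate partner $\boldsymbol{d}(Y')_{[0,0]}=(0,\dot{0},1,0,0)$ violates the adjacency condition, so $(2)\in\mathcal{T}(Y_{1,3})$ although it is not symmetric about $c$. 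Full symmetry of reachable positions holds only in the near-square cases (the paper's Lemma \ref{lem:nn_symmetric}), which may be where the intuition came from.

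Because of this, your case analysis (i)--(iii) covers only the symmetric decrement patterns and misses the generic single asymmetric hook removal; more seriously, the converse inclusion $\mathcal{O}(E(Y))\subseteq E(\mathcal{O}(Y))$ is left without support, since you justify ``every move of MHRG$(m,n+1)$ from $E(Y)$ changes the $c$- and $(c+1)$-components equally'' only by appeal to the false symmetry claim. The correct invariant is much weaker: only the two central components agree, i.e.\ $d_{c}(Y)=d_{c+1}(Y)$ for every $Y\in\mathcal{T}(Y_{m,n+1})$ (the paper's Lemma \ref{lemma:same_center}). That is exactly what excludes a move of MHRG$(m,n+1)$ that decrements $d_{c+1}$ without decrementing $d_{c}$ (an interval with $l=c+1$), which would have no preimage under $E$. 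Establishing this weaker statement is the real crux, and in the paper it requires a delicate extremal argument: take a counterexample $Y_0$ maximal in the number of boxes, analyze the move $Y_1\to Y_0$ that creates the defect using the parity of the count of boxes numbered $\hat{\alpha}_{m,n}$, and run a four-way case analysis on the bulge configuration of a predecessor $Y_2$. Your proposal would need to replace the symmetry claim by this lemma (or an equivalent) before the rest of the argument can go through.
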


In order to prove Theorem \ref{th:game_cor}, we need some lemmas.
For $l,r\in \mathbb{Z}$, we define $e_l,e_r : \mathbb{Z} \to \mathbb{Z}$ by
\begin{equation*}
    e_l(k) := \left\{
    \begin{aligned}
        &k\ &\text{if}\ k \le c,\\
        &k+1\ &\text{if}\ k > c,
    \end{aligned}\right.\qquad
    e_r(k) := \left\{
    \begin{aligned}
        &k\ &\text{if}\ k < c,\\
        &k+1\ &\text{if}\ k \ge c.
    \end{aligned}\right.
\end{equation*}
Note that $e_l(k) \not= c+1$ and $e_r(k) \not= c$.
The following lemma can be shown easily.

\begin{lemma}\label{lemma:e_lr}
Let $l,r\in \mathbb{Z}$.
It holds that $e_l(n-m-k) = n-m+1-e_r(k)$ for $k\in\mathbb{Z}$.
\end{lemma}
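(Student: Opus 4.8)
The statement to prove is Lemma~\ref{lemma:e_lr}: for all $k\in\mathbb{Z}$, the identity $e_l(n-m-k) = n-m+1-e_r(k)$ holds. Recall that $c = (n-m)/2$, so $n-m = 2c$, and the claim rewrites as $e_l(2c-k) = 2c+1-e_r(k)$.

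The plan is a direct case analysis on the position of $k$ relative to $c$, since both $e_l$ and $e_r$ are piecewise-defined with a jump near $c$. First I would record how the three relevant regimes of $k$ translate into regimes of $2c-k$: namely $k<c$ corresponds to $2c-k>c$, $k=c$ corresponds to $2c-k=c$, and $k>c$ corresponds to $2c-k<c$. This reflection symmetry about $c$ is the whole content of the lemma, so laying it out cleanly is the key organizational step.

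Next I would compute both sides in each regime. If $k<c$, then $e_r(k)=k$ (since $k<c$), so the right-hand side is $2c+1-k$; meanwhile $2c-k>c$, so $e_l(2c-k)=(2c-k)+1=2c-k+1$, matching. If $k=c$, then $e_r(c)=c+1$ (since $c\ge c$), so the right-hand side is $2c+1-(c+1)=c$; and $2c-k=c\le c$, so $e_l(c)=c$, matching. If $k>c$, then $e_r(k)=k+1$, so the right-hand side is $2c+1-(k+1)=2c-k$; and $2c-k<c\le c$, so $e_l(2c-k)=2c-k$, matching. In all three cases the two sides agree, proving the identity.

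This lemma is genuinely a routine verification with no real obstacle: once the substitution $n-m=2c$ is made, it is a mechanical check that the two thresholds ($k\le c$ for $e_l$ and $k<c$ for $e_r$) are placed exactly so that the reflection $k\mapsto 2c-k$ interchanges the branches correctly. The only point requiring a moment's care is the boundary case $k=c$, where one must use the strict inequality in the definition of $e_r$ (so that $e_r(c)=c+1$) together with the non-strict inequality in $e_l$ (so that $e_l(c)=c$); the off-by-one in the two thresholds is precisely what makes the boundary work and produces the $+1$ shift in the target formula.
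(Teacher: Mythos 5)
Your case analysis is correct: the substitution $n-m=2c$ and the three-way split on $k<c$, $k=c$, $k>c$ verify the identity in each regime, with the boundary case handled exactly by the asymmetry between the non-strict threshold in $e_l$ and the strict one in $e_r$. The paper states this lemma without proof ("can be shown easily"), and your verification is precisely the routine check the authors intended.
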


\begin{lemma}\label{lemma:hook_cor}
Let $l,r\in\mathbb{Z}$.
For $\boldsymbol{a} \in \mathbb{N}_0^{m+n+1}$, it holds that $E(\boldsymbol{a}_{[l,r]})=E(\boldsymbol{a})_{[e_l(l),e_r(r)]}$.
Therefore, for $Y\in\mathcal{F}(Y_{m,n})$, it holds that $\boldsymbol{d}(Y)_{[l,r]} \in \mathbb{D}_{m,n}$ if and only if $\boldsymbol{d}(E(Y))_{[e_l(l),e_r(r)]}  \in \mathbb{D}_{m,n+1} $.
\end{lemma}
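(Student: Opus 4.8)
The plan is to prove Lemma \ref{lemma:hook_cor} in two stages: first establish the component-wise identity $E(\boldsymbol{a}_{[l,r]}) = E(\boldsymbol{a})_{[e_l(l),e_r(r)]}$ for an arbitrary $\boldsymbol{a}\in\mathbb{N}_0^{m+n+1}$, and then deduce the ``if and only if'' statement about membership in $\mathbb{D}_{m,n}$ versus $\mathbb{D}_{m,n+1}$ as a formal consequence via \eqref{eq:aEa}. First I would unwind the definitions of both sides of the identity. The operation $\boldsymbol{a}\mapsto\boldsymbol{a}_{[l,r]}$ subtracts $1$ from components indexed by $l\le k\le r$ and leaves the rest fixed; the map $E$ duplicates the $c$-th component into a new $(c+1)$-st slot and re-indexes everything to its right by $+1$. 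The whole content of the identity is that these two operations commute, provided we translate the interval $[l,r]$ through the index shift correctly; that translation is exactly what $e_l$ and $e_r$ encode.

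The key step is a careful index bookkeeping. I would fix a target index $k$ ranging over $-m\le k\le n+1$ and compute the $k$-th component of each side, splitting into the cases $k<c$, $k=c$, $k=c+1$, and $k>c+1$, which correspond to the structure of $E$. For $k\le c$ the $E$-image reproduces $a_k$ (or $(\boldsymbol{a}_{[l,r]})_k$), and the subtraction is applied precisely when $l\le k\le r$; since $e_l(l)=l$ and $e_r(r)=r$ for indices not exceeding $c$, the two sides match. For $k\ge c+1$ the $E$-image reads off $a_{k-1}$, and one must verify that the condition ``$k$ lies in $[e_l(l),e_r(r)]$'' on the right is equivalent to ``$k-1$ lies in $[l,r]$'' on the left; this is where the asymmetric definitions of $e_l$ (shifting at $k>c$) and $e_r$ (shifting at $k\ge c$) are needed, so that the duplicated column at index $c+1$ receives the subtraction exactly when the original $c$-th column did. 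The special index $k=c+1$, carrying the duplicated value $a_c$, is the case that must be checked most carefully, since it is simultaneously a candidate boundary for both $e_l$ and $e_r$; one uses $e_l(l)\ne c+1$ and $e_r(r)\ne c$ to see the interval $[e_l(l),e_r(r)]$ treats the two copies of the $c$-th column consistently.

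Once the component identity is in hand, the second assertion follows with no further combinatorial work. By \eqref{eq:aEa}, applied to $\boldsymbol{d}(Y)_{[l,r]}$ in place of $\boldsymbol{a}$, we have $\boldsymbol{d}(Y)_{[l,r]}\in\mathbb{D}_{m,n}$ iff $E(\boldsymbol{d}(Y)_{[l,r]})\in\mathbb{D}_{m,n+1}$; and by the identity just proved together with $\boldsymbol{d}(E(Y))=E(\boldsymbol{d}(Y))$, the latter vector equals $\boldsymbol{d}(E(Y))_{[e_l(l),e_r(r)]}$, giving the claimed equivalence. I expect the main obstacle to be purely clerical rather than conceptual: keeping the four index regimes and the two distinct translation rules $e_l,e_r$ straight, and in particular confirming the boundary behaviour at the duplicated index $c+1$ so that subtraction on the original interval transfers correctly to the enlarged one. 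No deep idea is required beyond the observation that duplication and interval-subtraction commute under the right index translation, but the verification is the kind of case analysis where a sign or off-by-one slip is easy, so I would write it out explicitly for each regime.
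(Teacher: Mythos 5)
Your proposal is correct and follows essentially the same route as the paper: a direct verification that $E$ commutes with the interval-subtraction $\boldsymbol{a}\mapsto\boldsymbol{a}_{[l,r]}$ under the index translation $e_l,e_r$, followed by the formal deduction of the equivalence from $\boldsymbol{d}(E(Y))=E(\boldsymbol{d}(Y))$ and \eqref{eq:aEa}. The only difference is organizational --- the paper splits into three cases according to the position of $[l,r]$ relative to $c$, while you split into regimes of the component index $k$ --- and your checks at the duplicated index $c+1$ and the boundary behaviour of $e_l$ and $e_r$ are exactly the points the paper's case analysis handles.
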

\begin{proof}
If $c < l \le r$, then $l+1=e_l(l), r+1= e_r(r)$, and
\[
E(\boldsymbol{a}_{[l,r]}) = (\ldots , \underbrace{a_{c}}_{c\text{-th}}, \underbrace{a_{c}}_{(c+1)\text{-th}}, \ldots , a_{l-1}, \underbrace{a_{l}-1}_{(l+1)\text{-th}}, \ldots , \underbrace{a_{r}-1}_{(r+1)\text{-th}}, a_{r+1} , \ldots).
\]
Thus we get $E(\boldsymbol{a}_{[l,r]})= E(\boldsymbol{a})_{[l+1,r+1]} = E(\boldsymbol{a})_{[e_l(l),e_r(r)]}$.

If $l \le c \le r$, then $l=e_l(l), r+1= e_r(r)$, and
\[
E(\boldsymbol{a}_{[l,r]}) = (\ldots , a_{l-1}, \underbrace{a_{l}-1}_{l\text{-th}}, \ldots , \underbrace{a_{c}-1}_{c\text{-th}}, \underbrace{a_{c}-1}_{(c+1)\text{-th}}, \ldots , \underbrace{a_{r}-1}_{(r+1)\text{-th}}, a_{r+1} , \ldots).
\]
Thus we get $E(\boldsymbol{a}_{[l,r]})= E(\boldsymbol{a})_{[l,r+1]} = E(\boldsymbol{a})_{[e_l(l),e_r(r)]}$.

If $ l \le r < c $, then $l=e_l(l), r= e_r(r)$, and
\[
E(\boldsymbol{a}_{[l,r]}) = (\ldots , a_{l-1}, \underbrace{a_{l}-1}_{l\text{-th}}, \ldots , \underbrace{a_{r}-1}_{r\text{-th}}, a_{r+1} , \ldots, \underbrace{a_{c}}_{c\text{-th}}, \underbrace{a_{c}}_{(c+1)\text{-th}}, \ldots ).
\]
Thus we get $E(\boldsymbol{a}_{[l,r]})= E(\boldsymbol{a})_{[l,r]} = E(\boldsymbol{a})_{[e_l(l),e_r(r)]}$.

In all cases above, we have $E(\boldsymbol{a}_{[l,r]}) = E(\boldsymbol{a})_{[e_l(l),e_r(r)]}$ for $-m < l \le r < n$,. 
Hence, by $\boldsymbol{d}(E(Y)) = E(\boldsymbol{d}(Y))$ and \eqref{eq:aEa}, we obtain 
\begin{align*}
\boldsymbol{d}(Y)_{[l,r]} \in \mathbb{D}_{m,n} &\overset{\eqref{eq:aEa}}{\Leftrightarrow} E(\boldsymbol{d}(Y)_{[l,r]}) \in \mathbb{D}_{m,n+1}\\
&\Leftrightarrow E(\boldsymbol{d}(Y))_{[e_l(l),e_r(r)]} \in \mathbb{D}_{m,n+1}\\
&\Leftrightarrow \boldsymbol{d}(E(Y))_{[e_l(l),e_r(r)]} \in \mathbb{D}_{m,n+1},
\end{align*}
as desired.
\end{proof}

\begin{lemma}\label{lemma:step_cor}
Let $Y, Y'\in \mathcal{T}(Y_{m,n})$.
Assume that $Y \rightarrow Y'$, and $E(Y) \in \mathcal{T}(Y_{m,n+1})$.
Then, $E(Y') \in \mathcal{T}(Y_{m,n+1})$, and $E(Y) \rightarrow E(Y')$.
\end{lemma}
\begin{proof}
Because $Y \rightarrow Y'$, we see by the definition that 
\begin{enumerate}
    \item[(a)] there exists $-m<l\le r<n$ such that $\boldsymbol{d}(Y)\xrightarrow{l,r}\boldsymbol{d}(Y')$, or
    \item[(b)] there exist $-m<l\le r<n$ and $Y''\in \mathcal{F}(Y_{m,n})$ such that $\boldsymbol{d}(Y)\xrightarrow{l,r}\boldsymbol{d}(Y'')\xrightarrow{n-m-r,n-m-l}\boldsymbol{d}(Y')$.
\end{enumerate}
We give a proof only for the case (b);
the proof for the case (a) is similar and simpler.

By Lemma \ref{lemma:e_lr}, we have $e_l(n-m-r) = n-m+1-e_r(r)$ and $e_r(n-m-l) = n-m+1-e_l(l)$.
Thus we have 
\[
\boldsymbol{d}(E(Y))\xrightarrow{e_l(l),e_r(r)}\boldsymbol{d}(E(Y''))\xrightarrow{e_l(n-m-r),e_r(n-m-l)}\boldsymbol{d}(E(Y'))
\]
by Lemma \ref{lemma:hook_cor}, which implies that $E(Y) \rightarrow E(Y')$.
Thus we have proved the lemma.
\end{proof}

Let $Y' \in \mathcal{T}(Y_{m,n})$, and let $Y_{m,n} = Y_0 \xrightarrow{}Y_1\xrightarrow{}\cdots\xrightarrow{} Y_k = Y'$ be a transition from $Y_{m,n}$ to $Y'$ in MHRG$(m,n)$.
Note that $E(Y_0) = E(Y_{m,n}) = Y_{m,n+1} \in \mathcal{T}(Y_{m,n+1})$.
Also, we see by Lemma \ref{lemma:step_cor} that for $0 \le p < k$, if $E(Y_p) \in \mathcal{T}(Y_{m,n+1})$, then $E(Y_{p+1}) \in \mathcal{T}(Y_{m,n+1})$.
Thus we obtain $E(Y') \in \mathcal{T}(Y_{m,n+1})$ by inductive argument.
Therefore, we obtain 
\begin{equation}\label{eq:EC}
E(\mathcal{T}(Y_{m,n})) \subset \mathcal{T}(Y_{m,n+1}).
\end{equation}
Moreover, it is obvious by Lemma \ref{lemma:step_cor} that 
\begin{equation}\label{eq:ENCNE}
E(\mathcal{O}(Y)) \subseteq \mathcal{O}(E(Y))
\end{equation}
for $Y \in \mathcal{T}(Y_{m,n+1})$.

\begin{lemma}\label{lemma:same_center}
It holds that $d_{c}(Y) = d_{c+1}(Y)$ for all $Y \in \mathcal{T}(Y_{m,n+1})$.
\end{lemma}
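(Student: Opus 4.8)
The plan is to prove the equivalent assertion that $\mathcal{T}(Y_{m,n+1}) \subseteq E(\mathcal{F}(Y_{m,n}))$. Since $E$ merely duplicates the $c$-th coordinate (Definition \ref{def:index_cor}), lying in the image of $E$ is \emph{exactly} the condition $d_{c} = d_{c+1}$, so this reformulation is the lemma. In fact I would carry the slightly stronger inductive statement that every $Z \in \mathcal{T}(Y_{m,n+1})$ has the form $Z = E(X)$ with $X \in \mathcal{T}(Y_{m,n})$, and prove it by induction on the length $k$ of a transition $Y_{m,n+1} = Z_0 \to \cdots \to Z_k = Z$. The base case is immediate: $Y_{m,n+1} = E(Y_{m,n})$ and $Y_{m,n}\in\mathcal{T}(Y_{m,n})$. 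Throughout I abbreviate $\sigma(t) := (n+1)-m-t$, the involution of the index set with $\sigma(c) = c+1$, so that in case M4b the second decrement of Lemma \ref{lemma:same_number_hook} is the $\sigma$-reflection of the first.

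For the inductive step, write $Z_{k-1} = E(X)$ with $X\in\mathcal{T}(Y_{m,n})$ and organise the single move $Z_{k-1}\to Z_k$ around the index interval $[l',r']$ of its \emph{first} removed hook (Lemma \ref{lemma:hook_and_DRep}). Because $e_l$ never takes the value $c+1$ and $e_r$ never takes the value $c$ (the note after Lemma \ref{lemma:e_lr}), a hook of $Z_{k-1}=E(X)$ is of the form $[e_l(l),e_r(r)]$ for a hook of $X$ precisely when $l'\neq c+1$ and $r'\neq c$. In that generic case I would run the correspondence of Lemmas \ref{lemma:hook_cor}, \ref{lemma:e_lr} and \ref{lemma:step_cor} \emph{in reverse}: the injectivity of $e_l,e_r$ recovers a unique $X$-hook $[l,r]$, and whether the $Z$-move is M4a or M4b is transported along $\sigma$ to the matching datum for $X$; hence $X\to X'$ in $\mathcal{T}(Y_{m,n})$ and $Z_k = E(X')$ by \eqref{eq:aEa} and \eqref{eq:EC}. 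This disposes of all but the two boundary cases.

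The case $r'=c$ is impossible: by Lemma \ref{lemma:bulge_hook} such a hook requires a left bulge $d_c\searrow d_{c+1}$ at index $c+1$, whereas the duplication $d_c(Z_{k-1})=d_{c+1}(Z_{k-1})$ forces $(d_c,d_{c+1})$ to be a \emph{right} bulge there by Lemma \ref{lemma:bulge}, contradicting the requirement. The genuinely difficult case, and what I expect to be \textbf{the main obstacle}, is $l'=c+1$. Such a hook decrements $d_{c+1}$ but not $d_c$, so if it were a true M4a move it would break the equality; I must show it is always forced to be M4b, with reflected hook at $[\sigma(r'),c]$. After the first removal the endpoint condition at $c$ is automatic, since the duplication makes $d_c-d_{c+1}$ jump from $0$ to $1$ and thereby creates the left bulge at $c+1$ needed by Lemma \ref{lemma:same_number_hook}; everything therefore collapses to verifying the right-bulge condition at the index $\sigma(r')\le c$, which the first removal leaves untouched.

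It is here that reachability must enter, and I want to stress that the hypothesis $d_c=d_{c+1}$ \emph{alone is insufficient}: one can exhibit diagrams with $d_c=d_{c+1}$ lying outside $\mathcal{T}(Y_{m,n+1})$ from which the $l'=c+1$ hook really is an M4a move destroying the equality, so the argument cannot be a naive forward induction on $d_c=d_{c+1}$. Instead I would extract the right-bulge at $\sigma(r')$ from the full structure $Z_{k-1}=E(X)$ with $X\in\mathcal{T}(Y_{m,n})$: using the coordinate identification $d_t(E(X))=d_t(X)$ for $t\le c$, the condition becomes a bulge statement about $X$ at $\sigma(r')$, which I expect to follow from the validity of the first hook (itself a bulge statement about $X$) together with the balance of $X$ at its central diagonal $c$ — the fixed point of $X$'s own reflection — that reachability in MHRG$(m,n)$ guarantees. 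Granting this, the two decrements merge into the contiguous $\sigma$-symmetric interval $[\sigma(r'),r']$, which $E^{-1}$ identifies with a single center-crossing hook $[\sigma(r'),r'-1]$ of $X$; this yields $X\to X'$ with $Z_k = E(X')$ and closes the induction. The technical heart of the whole proof is exactly this last verification of the right-bulge at $\sigma(r')$ from the reachability of the preimage $X$.
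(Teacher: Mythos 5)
Your reduction is set up correctly: proving $\mathcal{T}(Y_{m,n+1})=E(\mathcal{T}(Y_{m,n}))$ by induction on transition length does imply the lemma; the case $r'=c$ is impossible for exactly the reason you give (the duplication forces a right bulge at the gap $c+1$, while Lemma \ref{lemma:bulge_hook} would require a left one); and the generic case $l'\neq c+1$, $r'\neq c$ does reverse cleanly through Lemmas \ref{lemma:e_lr} and \ref{lemma:hook_cor}. The genuine gap is the case $l'=c+1$, which you yourself call the technical heart and then dispose of with ``I expect'' and ``Granting this''. What has to be shown there is: for every reachable $X\in\mathcal{T}(Y_{m,n})$ and every admissible first hook $[c+1,r']$ of $E(X)$ (equivalently, a left bulge of $X$ at the gap $r'$), the reflected gap $n-m-r'+1$ of $X$ carries a right bulge, so that the move is forced to be of type (M4b). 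This is not a formal consequence of ``the validity of the first hook together with the balance of $X$ at its central diagonal'': as you yourself observe, the statement fails off $\mathcal{T}$ (for instance $(5,0)\in\mathcal{F}(Y_{2,5})$ has $d_c=d_{c+1}$, yet the hook $[c+1,4]$ is a genuine one-hook move to $(2,0)$, which breaks the equality), and no property of reachable positions established elsewhere in the paper delivers the needed bulge. So your outline replaces Lemma \ref{lemma:same_center} by an equivalent unproved claim about $\mathcal{T}(Y_{m,n})$; nothing in the proposal actually proves it.

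For comparison, the paper's proof is an extremal/backtracking argument precisely because a one-step induction of the kind you propose does not close: it takes a reachable $Y_0$ with $d_c(Y_0)\neq d_{c+1}(Y_0)$ maximizing $\#(Y_0)$, shows by a parity count of the boxes numbered $\hat{\alpha}_{m,n}$ (Lemma \ref{lemma:DRep_C}) that the move from its predecessor $Y_1$ is a single hook $\boldsymbol{d}(Y_1)\xrightarrow{c+1,k}\boldsymbol{d}(Y_0)$, additionally maximizes $k$, and then goes a \emph{second} step back to a predecessor $Y_2$ of $Y_1$, splitting into four cases according to the bulge types of $Y_2$ at the gaps $n-m-k+1$ and $k+1$; each case yields a larger counterexample, a larger $k$, or a contradiction with Lemma \ref{lemma:bulge}. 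Some device of this sort (two levels of backtracking plus a maximality argument) is what actually carries the reachability hypothesis into the bulge condition you need, and it is entirely absent from your proposal.
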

\begin{proof}
Suppose, for a contradiction, that there exists $Y \in \mathcal{T}(Y_{m,n+1})$ such that $d_{c}(Y) \not= d_{c+1}(Y)$.
Let $\mathcal{V} \subset \mathcal{T}(Y_{m,n+1})$ be the subset of $\mathcal{T}(Y_{m,n+1})$ consisting of elements $Y \in \mathcal{T}(Y_{m,n+1})$ such that $d_{c}(Y) \not= d_{c+1}(Y)$, and let $Y_0 \in \mathcal{V}$ be such that $\#(Y_0) \ge \#(Y)$ for all $T \in \mathcal{V}$.
Because $c \ge 0$, and $(d_{c}(Y_0), d_{c+1}(Y_0))$ satisfies the adjacency condition, we have $d_{c}(Y_0) = d_{c+1}(Y_0) +1$ and $d_{c}(Y_0)\searrow d_{c+1}(Y_0)$.

Because $Y_0 \not= Y_{m,n+1}$, there exists $Y_1 \in \mathcal{T}(Y_{m,n+1})$ such that $Y_1 \rightarrow Y_0$;
note that $\#(Y_1) \ge \#(Y_0)$, which implies that $Y_1 \notin \mathcal{V}$ by the maximality of $Y_0$.
Thus we have $d_{c}(Y_1) = d_{c+1}(Y_1)$, and $d_{c}(Y_1)\nearrow d_{c+1}(Y_1)$.
By Lemma \ref{lemma:DRep_C}, we set that for $p=0,1$, the number $t_p$ of boxes in $Y_p$ having the number $\hat{\alpha}_{m,n} = (m+n)/2$ is equal to $d_{c}(Y_p) + d_{c+1}(Y_p)$.
Thus, $t_1-t_0$ is odd.
If two hooks are removed in $Y_1 \rightarrow Y_0$, then the two hooks have a same multiset of numbers.
Thus, $t_1-t_0$ is even, which is a contradiction.
Thus, one hook is removed in $Y_1 \rightarrow Y_0$, and hence $d_{c}(Y_0) = d_{c}(Y_1), d_{c+1}(Y_0)= d_{c+1}(Y_1)-1$ by $0 \le d_{k}(Y_1) - d_{k}(Y_0) \le 1$ for $-m \le k \le n$.
Also, there exists $c+1 \le k = k(Y_1) < n+1$ such that $\boldsymbol{d}(Y_1)\xrightarrow{c+1,k}\boldsymbol{d}(Y_0)$, and $\boldsymbol{d}(Y_0)_{[n+1-m-k,c]} \notin \mathbb{D}_{m,n+1}$; note that $n+1-m-(c+1)=c$.
By Lemma \ref{lemma:bulge_hook}, we have $d_{n-m-k}(Y_1)\searrow d_{n+1-m-k}(Y_1), d_{c}(Y_1)\nearrow d_{c+1}(Y_1), d_{k}(Y_1)\searrow d_{k+1}(Y_1)$.
Now, we choose $Y_1$ such that $k = k(Y_1)$ is maximum.

Suppose that $Y_1 = Y_{m,n+1}$.
In this case, we have $d_{p}(Y_1)\nearrow d_{p+1}(Y_1)$ for $-m \le p < n-m $, and $d_{p}(Y_1)\searrow d_{p+1}(Y_1)$ for $n-m \le p \le n$.
Because $c \le n-m$, we have $d_{n-m-k}(Y_1)\nearrow d_{n+1-m-k}(Y_1)$, and $d_{n-m-k}(Y_0)\nearrow d_{n+1-m-k}(Y_0)$ by Lemma \ref{lemma:bulge_hook}.
Thus we have $\boldsymbol{d}(Y_0)_{[n+1-m-k,c]}\in \mathbb{D}_{m,n+1}$ by Lemma \ref{lemma:bulge_hook}, which is a contradiction.
Hence we obtain $Y_1 \not= Y_{m,n+1}$.
Then, there exists $Y_2 \in \mathcal{T}(Y_{m,n+1})$ such that $Y_2 \rightarrow Y_1$.
Note that $d_{c}(Y_2)  = d_{c+1}(Y_2) $ and $d_{c}(Y_2) \nearrow d_{c+1}(Y_2) $.

Suppose that $d_{n-m-k}(Y_2) \searrow d_{n+1-m-k}(Y_2) $ and $d_{k}(Y_2) \searrow d_{k+1}(Y_2) $.
By Lemma \ref{lemma:bulge_hook}, we have $\boldsymbol{d}(Y_2)_{[c+1,k]} \in \mathbb{D}_{m,n+1}$.
Let $Y_1' \in \mathcal{F}(Y_{m,n+1})$ be the Young diagram whose diagonal expression is equal to $\boldsymbol{d}(Y_2)_{[c+1,k]}$;
notice that $d_{c}(Y_1') \not= d_{c+1}(Y_1')$ and $d_{n-m-k}(Y_1') \searrow d_{n+1-m-k}(Y_1') $.
Since $\boldsymbol{d}(Y_1')_{[n+1-m-k,c]} \notin \mathbb{D}_{m,n+1}$, it follows that $Y_1' \in \mathcal{O}(Y_2)$, and hence $Y_1'\in \mathcal{V}$.
Because $\#(Y_1)-\#(Y_0)=\#(Y_2)-\#(Y_1')$, we have $\#(Y_1') = \#(Y_2)-\#(Y_1)+\#(Y_0) > \#(Y_0)$, which contradicts the maximality of $Y_0$.

Suppose that $d_{n-m-k}(Y_2) \searrow d_{n+1-m-k}(Y_2) $ and $d_{k}(Y_2) \nearrow d_{k+1}(Y_2) $.
If two hooks are removed in $Y_2 \rightarrow Y_1$, then there exist $-m<l\le r<n$ and $Y'\in \mathcal{F}(Y_{m,n})$ such that
\[
\boldsymbol{d}(Y_2)\xrightarrow{l,r}\boldsymbol{d}(Y')\xrightarrow{n+1-m-r,n+1-m-l}\boldsymbol{d}(Y_1).
\]
Because $d_{k}(Y_2) \nearrow d_{k+1}(Y_2) $ and $d_{k}(Y_1) \searrow d_{k+1}(Y_1) $, we have 
\[
\boldsymbol{d}(Y_2)\xrightarrow{k+1,r}\boldsymbol{d}(Y')\xrightarrow{n+1-m-r,n-m-k}\boldsymbol{d}(Y_1)
\]
or 
\[
\boldsymbol{d}(Y_2)\xrightarrow{l,n-m-k}\boldsymbol{d}(Y')\xrightarrow{k+1,n+1-m-l}\boldsymbol{d}(Y_1).
\]
Thus we have $d_{n-m-k}(Y_1) \nearrow d_{n+1-m-k}(Y_1) $, which is a contradiction.
Hence one hook is removed in $Y_2 \rightarrow Y_1$.
Then there exist $p \ge k+1$ such that
\[
\boldsymbol{d}(Y_2)\xrightarrow{k+1, p}\boldsymbol{d}(Y_1).
\]
Note that $\boldsymbol{d}(Y_1)_{[n+1-m-p,n-m-k]} \notin \mathbb{D}_{m,n+1}$ and $d_{n-m-p}(Y_2) \searrow d_{n+1-m-p}(Y_2)$, $d_{p}(Y_2) \searrow d_{p+1}(Y_2)$.
By Lemma \ref{lemma:bulge_hook}, we have $\boldsymbol{d}(Y_2)_{[c+1,p]} \in \mathbb{D}_{m,n+1}$.
Let $Y_1' \in \mathcal{F}(Y_{m,n+1})$ be the Young diagram whose diagonal expression is equal to $\boldsymbol{d}(Y_2)_{[c+1,p]}$;
notice that $d_{c}(Y_1') \not= d_{c+1}(Y_1')$.
Since $\boldsymbol{d}(Y_1')_{[n+1-m-p,c]} \notin \mathbb{D}_{m,n+1}$, it follows that $Y_1' \in \mathcal{O}(Y_2)$, and hence $Y_1'= \boldsymbol{d}(Y_2)_{[c+1,p]} = (\boldsymbol{d}(Y_2)_{[k+1, p]})_{[c+1,k]} = Y_0$, which contradicts the maximality of $k$.

Suppose that $d_{n-m-k}(Y_2) \nearrow d_{n+1-m-k}(Y_2) $ and $d_{k}(Y_2) \searrow d_{k+1}(Y_2) $.
If two hooks are removed in $Y_2 \rightarrow Y_1$, then there exist $-m<l\le r<n$ and $Y'\in \mathcal{F}(Y_{m,n})$ such that
\[
\boldsymbol{d}(Y_2)\xrightarrow{l,r}\boldsymbol{d}(Y')\xrightarrow{n+1-m-r,n+1-m-l}\boldsymbol{d}(Y_1).
\]
Because $d_{n-m-k}(Y_2) \nearrow d_{n+1-m-k}(Y_2) $ and $d_{n-m-k}(Y_1) \searrow d_{n+1-m-k}(Y_1) $, we have 
\[
\boldsymbol{d}(Y_2)\xrightarrow{n+1-m-k,r}\boldsymbol{d}(Y')\xrightarrow{n+1-m-r,k}\boldsymbol{d}(Y_1)
\]
or
\[
\boldsymbol{d}(Y_2)\xrightarrow{l,k}\boldsymbol{d}(Y')\xrightarrow{n+1-m-k,n+1-m-l}\boldsymbol{d}(Y_1).
\]
Thus we have $d_{k}(Y_1) \nearrow d_{k+1}(Y_1) $, which is a contradiction.
Hence one hook is removed in $Y_2 \rightarrow Y_1$.
Then there exist $p \ge n+1-m-k$ such that
\[
\boldsymbol{d}(Y_2)\xrightarrow{n+1-m-k,p}\boldsymbol{d}(Y_1).
\]
Note that $\boldsymbol{d}(Y_1)_{[n+1-m-p,k]} \notin \mathbb{D}_{m,n+1}$ and $d_{n-m-p}(Y_2) \searrow d_{n+1-m-p}(Y_2)$, $d_{p}(Y_2) \searrow d_{p+1}(Y_2)$.
Because $d_{c}(Y_2) \nearrow d_{c+1}(Y_2)$, we have $p \not= c$. 
By Lemma \ref{lemma:bulge_hook}, we have $\boldsymbol{d}(Y_2)_{[c+1,\max(p,n+1-m-p)]} \in \mathbb{D}_{m,n+1}$; notice that $c+1 \le \max(p,n+1-m-p)$ since $p \not= c$.
Let $Y_1' \in \mathcal{F}(Y_{m,n+1})$ be the Young diagram whose diagonal expression is equal to $\boldsymbol{d}(Y_2)_{[c+1,\max(p,n+1-m-p)]}$;
note that $d_{c}(Y_1') \not= d_{c+1}(Y_1')$.
Since $\boldsymbol{d}(Y_1')_{[\min\{p,n+1-m-p\},c]} \notin \mathbb{D}_{m,n+1}$, it follows that $Y_1' \in \mathcal{O}(Y_2)$, and hence $Y_1'\in \mathcal{V}$.
If $\max(p,n+1-m-p) \le k$, then $\#(Y_1)-\#(Y_0) \ge \#(Y_2)-\#(Y_1')$, and hence $\#(Y_1') \ge \#(Y_2)-\#(Y_1)+\#(Y_0) > \#(Y_0)$.
If $\max(p,n+1-m-p) > k$, then $\#(Y_2)-\#(Y_1) > \#(Y_2)-\#(Y_1')$, and hence $\#(Y_1') > \#(Y_1) > \#(Y_0)$.
In any case, we obtain $\#(Y_1') > \#(Y_0)$, which contradicts the maximality of $Y_0$.

Suppose that $d_{n-m-k}(Y_2) \nearrow d_{n+1-m-k}(Y_2) $ and $d_{k}(Y_2) \nearrow d_{k+1}(Y_2) $.
Let $Y_1' \in \mathcal{O}(Y_2)$.
If $d_{n-m-k}(Y_1')\searrow d_{n+1-m-k}(Y_1')$ and $d_{k}(Y_1')\searrow d_{k+1}(Y_1')$, then by Lemma \ref{lemma:bulge_hook}, we have
\[
\boldsymbol{d}(Y_2)\xrightarrow{n+1-m-k,n-m-k}\boldsymbol{d}(Y')\xrightarrow{k+1,k}\boldsymbol{d}(Y_1')\]
or 
\[\boldsymbol{d}(Y_2)\xrightarrow{k+1,k}\boldsymbol{d}(Y')\xrightarrow{n+1-m-k,n-m-k}\boldsymbol{d}(Y_1')
\] 
for $Y'\in \mathcal{F}(Y_{m,n})$, which is a contradiction.
Thus there exists no option $Y_1' \in \mathcal{O}(Y_2)$ such that $d_{n-m-k}(Y_1')\searrow d_{n+1-m-k}(Y_1')$ and $d_{k}(Y_1')\searrow d_{k+1}(Y_1')$, which contradicts $Y_2 \rightarrow Y_1$.

Thus we have proved Lemma \ref{lemma:same_center}.
\end{proof}

\begin{proof}[Proof of Theorem \ref{th:game_cor}]
We have shown that the map $E:\mathcal{T}(Y_{m,n}) \to \mathcal{T}(Y_{m,n+1})$ is injective (see \eqref{eq:EC}), and $E(\mathcal{O}(Y)) \subseteq \mathcal{O}(E(Y))$ for $Y \in \mathcal{T}(Y_{m,n})$ (see \eqref{eq:ENCNE}).
Hence it remains to show that $E(\mathcal{O}(Y)) \supseteq \mathcal{O}(E(Y))$ for $Y \in \mathcal{T}(Y_{m,n})$ and $E(\mathcal{T}(Y_{m,n})) = \mathcal{T}(Y_{m,n+1})$.

We first show that $E(\mathcal{O}(Y)) \supseteq \mathcal{O}(E(Y))$.
Let $Y \in \mathcal{T}(Y_{m,n})$, and let $X \in \mathcal{O}(E(Y))$.
There exists $-m<l\le r<n$ such that 
\[
\boldsymbol{d}(Y)\xrightarrow{l,r}\boldsymbol{d}(X) \tag{a} \label{eq:a},
\]
or there exist $-m<l\le r<n$ and $X'\in \mathcal{F}(Y_{m,n})$ such that
\[
\boldsymbol{d}(Y)\xrightarrow{l,r}\boldsymbol{d}(X')\xrightarrow{n-m-r,n-m-l}\boldsymbol{d}(X) \tag{b} \label{eq:b}.
\]
By Lemma \ref{lemma:same_center}, we have $d_{c}(E(Y))\nearrow d_{c+1}(E(Y))$, and $r \not= c$.

In the first case \eqref{eq:a}, we get $\boldsymbol{d}(X)_{[n+1-m-r,n+1-m-l]}\notin \mathbb{D}_{m,n+1}$.
If $l = c+1$, then $d_{c}(X) \searrow d_{c+1}(X)$ and $d_{c}(X) > d_{c+1}(X)$, which contradicts Lemma \ref{lemma:same_center}.
If $l \not= c+1$, then there exist $m < l_0,r_0 < n$ such that $e_l(l_0) = l,e_r(r_0) = r$.
By Lemma \ref{lemma:hook_cor}, we have $\boldsymbol{d}(Y)_{[l_0,r_0]} \in \mathbb{D}_{m,n}$, and $ (\boldsymbol{d}(Y)_{[l_0,r_0]})_{[n-m-r_0,n-m-l_0]} \notin \mathbb{D}_{m,n}$.
Thus the Young diagram $Y'\in\mathcal{T}(Y_{m,n})$ whose diagonal expression is equal to  $\boldsymbol{d}(Y)_{[l_0,r_0]} \in \mathbb{D}_{m,n}$ is an option of $Y$.
By the proof of Lemma \ref{lemma:step_cor}, we obtain $X = E(Y') \in E(\mathcal{O}(Y))$.

In the second case \eqref{eq:b}, if $l = c+1$, then $\boldsymbol{d}(X) = (\boldsymbol{d}(E(Y))_{[c+1,r]})_{[n+1-m-r,c]} = \boldsymbol{d}(E(Y))_{[n+1-m-r,r]}$.
Then there exist $m < l_0,r_0 < n$ such that $e_l(l_0) = n+1-m-r,e_r(r_0) = r$.
By Lemma \ref{lemma:e_lr}, we have $e_l(n-m-r_0)=e_l(n-m-r_0)+e_r(r_0)-r= n-m+1-r = e_l(l_0)$, and hence $l_0 = n-m-r_0$.
By Lemma \ref{lemma:hook_cor}, we have $\boldsymbol{d}(Y)_{[l_0,r_0]} \in \mathbb{D}_{m,n}$, and $ (\boldsymbol{d}(Y)_{[l_0,r_0]})_{[n-m-r_0,n-m-l_0]} = \boldsymbol{d}(Y)_{[l_0,r_0]})_{[l_0,r_0]} \notin \mathbb{D}_{m,n}$.
Thus the Young diagram $Y'\in\mathcal{T}(Y_{m,n})$ whose diagonal expression is equal to  $\boldsymbol{d}(Y)_{[l_0,r_0]} \in \mathbb{D}_{m,n}$ is an option of $Y$.
By the proof of Lemma \ref{lemma:step_cor}, we obtain $X = E(Y') \in E(\mathcal{O}(Y))$.
If $l \not= c+1$, then there exist $m < l_0,r_0 < n$ such that $e_l(l_0) = l,e_r(r_0) = r$.
By Lemma \ref{lemma:hook_cor}, we have $\boldsymbol{d}(Y)_{[l_0,r_0]} \in \mathbb{D}_{m,n}$ and $ (\boldsymbol{d}(Y)_{[l_0,r_0]})_{[n-m-r_0,n-m-l_0]} \in \mathbb{D}_{m,n}$.
Thus the Young diagram $Y'\in\mathcal{T}(Y_{m,n})$ whose diagonal expression is equal to  $(\boldsymbol{d}(Y)_{[l_0,r_0]})_{[n-m-r_0,n-m-l_0]} \in \mathbb{D}_{m,n}$ is an option of $Y$.
By the proof of Lemma \ref{lemma:step_cor}, we obtain $X = E(Y') \in E(\mathcal{O}(Y))$.

In any case, we obtain $X \in E(\mathcal{O}(Y))$, as desired.

We next show that $E(\mathcal{T}(Y_{m,n})) = \mathcal{T}(Y_{m,n+1})$.
Let $X' \in \mathcal{T}(Y_{m,n+1})$, and let $Y_{m,n+1} = X_0 \xrightarrow{}X_1\xrightarrow{}\cdots\xrightarrow{} X_k = X'$ be a transition from $Y_{m,n+1}$ to $X'$ in MHRG$(m,n+1)$.
We show by induction on $k$ that $X' \in E(\mathcal{T}(Y_{m,n}))$.
If $k=0$, then $X' = Y_{m,n+1} = E(Y_{m,n}) \in E(\mathcal{T}(Y_{m,n}))$.
Assume that $k>0$; note that $X_{k-1} \in E(\mathcal{T}(Y_{m,n}))$ by the induction hypothesis.
Let $X_{k-1}' \in \mathcal{T}(Y_{m,n})$ be such that $X_{k-1} = E(X_{k-1}')$.
Because $E(\mathcal{O}(X_{k-1}')) = \mathcal{O}(E(X_{k-1}')) = \mathcal{O}(X_{k-1})$ as shown above, we get $X' = X_k \in \mathcal{O}(X_{k-1}) = E(\mathcal{O}(X_{k-1}')) \subset E(\mathcal{T}(Y_{m,n}))$, as desired.
Therefore, we conclude that $E(\mathcal{T}(Y_{m,n})) \supset \mathcal{T}(Y_{m,n+1})$, and hence $E(\mathcal{T}(Y_{m,n})) = \mathcal{T}(Y_{m,n+1})$.

This completes the proof of Theorem \ref{th:game_cor}.
\end{proof}


\subsection{Case of MHRG$(1,n)$}
Recall that $Y_{1,l}=(l)$ denotes the rectangular Young diagram of size $1\times l$ for $l\in \mathbb{N}_0$.
\begin{theorem}\label{th:1n}
Let $m=1$, and $n\in \mathbb{N}$.
In MHRG$(1,n)$, 
\begin{center}
$\mathcal{T}(Y_{1,n})=\left \{ 
\begin{array}{cc}
    \mathcal{F}(Y_{1,n})  & \text{if $n$ is odd},\\ 
    \mathcal{F}(Y_{1,n})\setminus\{(\frac{n}{2})\} & \text{if $n$ is even}.
\end{array}
\right.$
\end{center}
Moreover, for $0\leq l\leq n$ such that $Y_{1,l}\in \mathcal{T}(Y_{1,n})$,
\begin{center}
$\mathcal{G}((l))=\left \{ 
\begin{array}{ccc}
    l  & \text{if $n$ is odd},\\ 
    l & \text{if $n$ is even and $l<n/2$},\\
    l-1 & \text{if $n$ is even and $n/2<l$}.
\end{array}
\right.$
\end{center}
In particular,
\begin{center}
$\mathcal{G}((n))=\left \{ 
\begin{array}{cc}
    n  & \text{if $n$ is odd},\\ 
    n-1 & \text{if $n$ is even}.
\end{array}
\right.$
\end{center}
\end{theorem}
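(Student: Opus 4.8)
The plan is to reduce everything to one-row combinatorics. Every position is $(l)$ for some $0\le l\le n$, with diagonal expression $\boldsymbol{d}((l)) = (0,\dot{1},1,\ldots,1,0,\ldots,0)$ having $d_k = 1$ exactly for $0\le k\le l-1$. Write $\mu(k) := \min\{k+1,n-k\}$ for the unimodal number on the $k$-th diagonal; the crucial input is the palindromic symmetry $\mu(k) = \mu(n-1-k)$, immediate from the definition and underlying Remark \ref{remark:unimodal}.

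First I would pin down exactly when (M4b) fires. Removing the hook at box $(1,j)$ from $(l)$ strips the diagonals $j-1,\ldots,l-1$, i.e.\ $\boldsymbol{d}((l))\xrightarrow{j-1,\,l-1}\boldsymbol{d}((j-1))$, and the hook's multiset is $\{\!\{\mu(k)\mid j-1\le k\le l-1\}\!\}$. By Lemma \ref{lemma:same_number_hook}, if $(j-1)$ admits a hook with the same multiset (so that (M4b) triggers), the second strip is forced to be $\boldsymbol{d}((j-1))\xrightarrow{n-l,\,n-j}\boldsymbol{d}(Y'')$. But in a one-row diagram every hook removes a suffix of the diagonals, so the largest index it touches is the last present diagonal $j-2$; hence $n-j = j-2$, forcing $n$ even and $j = \tfrac{n}{2}+1$. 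Conversely, when $n$ is even and $\tfrac{n}{2}+1\le l$, the palindrome symmetry gives that the hook at box $(1,n-l+1)$ of $(\tfrac{n}{2})$ has multiset $\{\!\{\mu(k)\mid n-l\le k\le \tfrac{n}{2}-1\}\!\} = \{\!\{\mu(k)\mid \tfrac{n}{2}\le k\le l-1\}\!\}$, so (M4b) does fire, and one checks $Y'' = (n-l)$. Thus the only compound move goes through the central box $(1,\tfrac{n}{2}+1)$ and sends $(l)\mapsto(n-l)$; all other moves are single strips $(l)\to(j-1)$.

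From this the transition set and the options follow. For $n$ odd no compound move exists, so the options of $(l)$ are exactly $\{(0),\ldots,(l-1)\}$ and $(n)$ reaches every $(l)$; hence $\mathcal{T}(Y_{1,n}) = \mathcal{F}(Y_{1,n})$ and, by induction on $l$, $\mathcal{G}((l)) = \mathrm{mex}\{0,\ldots,l-1\} = l$. For $n$ even, no move ever stops at $(\tfrac{n}{2})$: a single strip ending there would use the central box and be forced to continue, while every compound move lands on some $(n-l)$ with $n-l<\tfrac{n}{2}$; so $(\tfrac{n}{2})\notin\mathcal{T}(Y_{1,n})$, while $(n)$ reaches every other $(l)$, giving $\mathcal{T}(Y_{1,n}) = \mathcal{F}(Y_{1,n})\setminus\{(\tfrac{n}{2})\}$. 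The options are $\{(0),\ldots,(l-1)\}$ if $l<\tfrac{n}{2}$ and $\{(0),\ldots,(l-1)\}\setminus\{(\tfrac{n}{2})\}$ if $l>\tfrac{n}{2}$ (the compound target $(n-l)$ being absorbed). Induction then yields $\mathcal{G}((l)) = l$ for $l<\tfrac{n}{2}$ and $\mathcal{G}((l)) = l-1$ for $l>\tfrac{n}{2}$, since in the latter case the option $\mathcal{G}$-values are $\{0,\ldots,\tfrac{n}{2}-1\}\cup\{\tfrac{n}{2},\ldots,l-2\} = \{0,\ldots,l-2\}$. Specializing to $l=n$ gives the final assertion.

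The hard part will be the first step, the exact characterization of when (M4b) fires; everything afterward is a short induction. The care is in using Lemma \ref{lemma:same_number_hook} to see that the forced second strip $[n-l,\,n-j]$ can be realized by an actual hook of the one-row diagram $(j-1)$ only when its top index $n-j$ equals the last diagonal $j-2$, together with the palindrome check for the converse. I also note the even case can alternatively be deduced from the odd case via Theorem \ref{th:game_cor}: since $1+n$ is even for $n$ odd, $E$ identifies MHRG$(1,n)$ with MHRG$(1,n+1)$, sending $(l)\mapsto(l)$ for $l<\tfrac{n+1}{2}$ and $(l)\mapsto(l+1)$ for $l\ge\tfrac{n+1}{2}$, which reproduces the stated $\mathcal{T}$ and $\mathcal{G}$-values for the even index $n+1$.
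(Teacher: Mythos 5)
Your proof is correct, and its treatment of the even case is genuinely different from the paper's. The paper disposes of even $n$ in one line by invoking Theorem \ref{th:game_cor} (since $1+n$ is even when $n$ is odd, $E$ identifies MHRG$(1,n)$ with MHRG$(1,n+1)$), and then only analyzes odd $n$, where it simply displays the unimodal numbering of $(l)$ and asserts that a double hook removal never occurs, reducing to one-heap Nim. You instead analyze both parities head-on: using Lemma \ref{lemma:same_number_hook} you show the forced second strip $[n-l,\,n-j]$ can be an actual hook of the one-row diagram $(j-1)$ only when its top index $n-j$ equals the last present diagonal $j-2$, which pins the unique compound move to the central box $(1,\tfrac{n}{2}+1)$ for even $n$ and rules it out entirely for odd $n$; the palindrome symmetry $\mu(k)=\mu(n-1-k)$ then gives the converse and the target $(n-l)$. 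This buys a self-contained argument that does not lean on the comparatively heavy machinery behind Theorem \ref{th:game_cor} (in particular Lemma \ref{lemma:same_center}), and it also makes the odd-case claim ``no double removal ever happens'' fully rigorous rather than read off from a picture; the cost is a somewhat longer case analysis, and your closing remark correctly identifies the paper's shortcut as an alternative. One small point worth keeping explicit in a final write-up: the uniqueness of the strip range in Lemma \ref{lemma:same_number_hook} is what guarantees that \emph{no} box of $(j-1)$ (not merely the obvious ones) can carry the matching multiset when $n-j\neq j-2$, which is exactly the step you flag as the crux.
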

\begin{proof}

By Theorem \ref{th:game_cor}, it suffices to show the assertion in the case that $n$ is odd.
\if0
Note that $\alpha_{1,n}(1,i) = \alpha_{1,n}(1,n+1-i)$ and the box having the number $\hat{\alpha}_{1,n} = \lfloor (n+1)/2 \rfloor$ is only $(1,(n+1)/2))$; we set $k=(n+1)/2$.

If $l \le k$, then each box in $(l)$ has different number and clearly $\mathcal{O}{(l)} = \{(i) \mid 0 \le i < l\}$. 

Assume that $l > k$.
We set $h := h_Y(1,a)$, and $Y' := Y\setminus h$.
If $a\leq k$, then $\alpha_{1,n}(1,k)\in \mathcal{A}(h)$ and $\alpha_{1,n}(1,k)\not \in \mathcal{A}(Y')$, so we cannot continuously remove the hook.
If $a > k$, then $\alpha_{1,n}(1,k)\not \in \mathcal{A}(h)$ and $\alpha_{1,n}(1,k)\in \mathcal{A}(Y')$. In this case, let $s = \alpha_{1,n}(1,l)$ be the smallest number in hook $h$ and $h'=h_{Y'}(1,a')$. Since $\alpha_{1,n}(1,n+1-l)=s\in \mathcal{A}(h')$ and $n+1-l<k$, it is necessarily $\alpha_{1,n}(1,k)\in \mathcal{A}(h')$, which is a contradiction. So we cannot continuously remove the hook.
Thus, $\mathcal{O}{(l)} = \{(i) \mid 0 \le i < l\}$. So $(l)$ is always isomorphic to Nim with one heap (see \cite[Chapter I]{Siegel}), whose heap size is $l$. In particular, it holds for $l=n$.
Therefore, in MHRG$(1,n)$ ($n$ is odd), $\mathcal{T}((n))=\mathcal{F}((n))$ and $\mathcal{G}((l))=l$ holds.
\fi
We set $k=(n+1)/2\in\mathbb{N}$.
We see that for $0\leq l\leq n$, the unimodal numbering of $(l)\in \mathcal{F}(Y_{1,n})$ is as follows: 
\ytableausetup{centertableaux, mathmode, boxsize=2.0em}
\begin{center}
\begin{ytableau}
    1&2&\cdots&\scriptstyle l-1&l\\
\end{ytableau}
\quad if $0\leq l\leq k$,\\
\vspace{11pt}
\begin{ytableau}
    1&2&\cdots&\scriptstyle k-1&k&\scriptstyle k-1&\cdots&\scriptscriptstyle n+2-l&\scriptscriptstyle n+1-l\\
\end{ytableau}
\quad if $k <l\leq n$.
\end{center}
By this fact, we deduce that in MHRG$(1,n)$ (with odd n), the operation removing two hooks never happen.
Hence, we obtain $\mathcal{O}{(l)} = \{(i) \mid 0 \le i < l\}$ for all $0\leq l\leq n$.
The assertion of the theorem follows immediately from this fact and the definition of the $\mathcal{G}$-value (cf. Nim with one heap; see \cite[Chapter I]{Siegel}).
\end{proof}


\subsection{Case of MHRG$(2,n)$}

Let $m = 2$ and $n\ge 2$.
Recall that $Y = (\lambda_1,\lambda_2)$ denotes the Young diagram having $\lambda_1$ boxes in the 1st row, and $\lambda_2$ boxes in the 2nd row.
If $n$ is even, then MHRG$(2,n)$ is isomorphic to MHRG$(2,n+1)$ (see Theorem \ref{th:game_cor}).
Thus it suffices to study the case that $n$ is even; we set $n' := n/2 \in \mathbb{N}$.

\begin{lemma}\label{lemma:bulge_part}
Let $(\lambda_1,\lambda_2) \in \mathcal{F}(Y_{2,2n'})$, and let $\mu_1,\mu_2\in \mathbb{N}_0$ with $2n' \ge \mu_1 \ge \mu_2 \ge 0$.
Then $(\lambda_1,\lambda_2) = (\mu_1,\mu_2)$ if and only if $d_{\mu_1-1}(Y) \searrow d_{\mu_1}(Y)$, $d_{\mu_2-2}(Y) \searrow d_{\mu_2-1}(Y)$, and $d_{k-1}(Y) \nearrow d_{k}(Y)$ for $-2 < k \le 2n' = n$ with $k \not= \mu_1,\mu_2-1$.
\end{lemma}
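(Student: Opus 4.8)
The plan is to pass to diagonal expressions and exploit the bijection $\boldsymbol{d} = \boldsymbol{d}_{2,n}:\mathcal{F}(Y_{2,n}) \to \mathbb{D}_{2,n}$ of Proposition~\ref{prop:DRep}, under which $(\lambda_1,\lambda_2) = (\mu_1,\mu_2)$ is equivalent to $\boldsymbol{d}(Y) = \boldsymbol{d}((\mu_1,\mu_2))$. Since $\boldsymbol{d}(Y) \in \mathbb{D}_{2,n}$, every pair $(d_{k-1}(Y), d_k(Y))$ satisfies the adjacency condition and so, by Lemma~\ref{lemma:bulge}(1), is a left or a right bulge; a direct inspection of Definitions~\ref{def:AD_condition} and~\ref{def:bulge} shows it is \emph{exactly} one of the two. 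The hypotheses of the lemma therefore amount to prescribing, for every $k$ with $-2 < k \le n$, the bulge type of $(d_{k-1}(Y), d_k(Y))$: a left bulge exactly at $k = \mu_1$ and at $k = \mu_2 - 1$ (these are distinct, since $\mu_1 \ge \mu_2 > \mu_2 - 1$), and a right bulge everywhere else.

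First I would record the elementary dictionary between bulges and the steps of the sequence. Unwinding the adjacency condition separately on the left part ($k \le 0$) and the right part ($k \ge 1$), one finds: for $k \le 0$ a left bulge means $d_k = d_{k-1}$ and a right bulge means $d_k = d_{k-1}+1$; for $k \ge 1$ a left bulge means $d_k = d_{k-1}-1$ and a right bulge means $d_k = d_{k-1}$. In every case the bulge type together with $d_{k-1}$ determines $d_k$. Because $d_{-2} = 0$ for all of $\mathbb{D}_{2,n}$, an immediate induction on $k$ then shows that two elements of $\mathbb{D}_{2,n}$ with the same set of left-bulge positions must coincide. This uniqueness is the heart of the \emph{if} direction: once the \emph{only if} direction is in hand, the prescribed bulge data force $\boldsymbol{d}(Y) = \boldsymbol{d}((\mu_1,\mu_2))$, whence $Y = (\mu_1,\mu_2)$ by injectivity of $\boldsymbol{d}$.

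For the \emph{only if} direction I would compute $\boldsymbol{d}((\mu_1,\mu_2))$ explicitly. Counting boxes on diagonal $k$ gives $d_k = 2$ for $0 \le k \le \mu_2 - 2$, $d_k = 1$ for $\mu_2 - 1 \le k \le \mu_1 - 1$ (and $d_{-1} = 1$ when $\mu_2 \ge 1$), and $d_k = 0$ otherwise. Reading off the signs of $d_k - d_{k-1}$ and applying the dictionary above, the pair at $k$ is a left bulge precisely when $k = \mu_1$ (the drop of the first row's tail to $0$) or $k = \mu_2 - 1$ (the drop produced by the second row), and a right bulge otherwise. Thus $\boldsymbol{d}((\mu_1,\mu_2))$ satisfies exactly the conditions stated in the lemma, which proves \emph{only if} and, via the uniqueness of the previous paragraph, also closes \emph{if}.

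I expect the only genuine nuisance to be the boundary and degenerate configurations in this last computation, where the generic profile $0,1,2,\ldots,2,1,\ldots,1,0,\ldots$ of $(d_k)$ partially collapses: $\mu_2 = 0$ (so $\mu_2 - 1 = -1$ lies in the left part), $\mu_2 = 1$ (so $\mu_2 - 1 = 0$ lies on the left/right boundary), $\mu_1 = \mu_2$ (empty ``$2$''-block), and $\mu_1 = 0$ (the empty diagram). Each is finite and checked directly against the dictionary. Alternatively, one can sidestep the casework by observing that every sequence in $\mathbb{D}_{2,n}$ carries exactly two left bulges — the left part contributes $2 - d_0$ of them and the right part contributes $d_0$ — so that prescribing two left-bulge positions is automatically consistent, and it remains only to identify them with $\mu_1$ and $\mu_2 - 1$.
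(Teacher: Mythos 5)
Your proposal is correct and follows essentially the same route as the paper: both pass to the diagonal expression, compute $\boldsymbol{d}((\mu_1,\mu_2))$ explicitly, and translate the step pattern ($0/{+1}$ on the left part, $0/{-1}$ on the right part) into left/right bulge data. The only difference is organizational — you isolate as a separate uniqueness step the fact that $d_{-2}=0$ together with the bulge type at each position determines the whole sequence, which the paper leaves implicit inside its ``which is equivalent to'' assertions; your case checklist at the end ($\mu_2\in\{0,1\}$, $\mu_1=\mu_2$, $\mu_1=0$) covers exactly the degenerate configurations that need verification.
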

\begin{proof}
If $\lambda_2 = 0$, then $(\lambda_1,0) = (\mu_1,\mu_2)$ if and only if $\mu_2 = 0$, $d_{-1}(Y) = 0$, $d_{k}(Y) = 1$ for $0\le k < \mu_1$, and $d_{k}(Y) = 0$ for $\mu_1\le k < 2n'$, which is equivalent to $d_{\mu_1-1}(Y) \searrow d_{\mu_1}(Y)$, $d_{\mu_2-2}(Y) \searrow d_{\mu_2-1}(Y)$, and $d_{k-1}(Y) \nearrow d_{k}(Y)$ for $-2 < k \le 2n' = n$ with $k \not= \mu_1,\mu_2-1$.

If $\lambda_2 > 0$, then $(\lambda_1,\lambda_2) = (\mu_1,\mu_2)$ if and only if $d_{-1}(Y) = 1$, $d_{k}(Y) = 2$ for $0\le k < \mu_2-1$, $d_{k}(Y) = 1$ for $\mu_2-1\le k < \mu_1$, and $d_{k}(Y) = 0$ for $\mu_1\le k < 2n'$, which is equivalent to $d_{\mu_1-1}(Y) \searrow d_{\mu_1}(Y)$, $d_{\mu_2-2}(Y) \searrow d_{\mu_2-1}(Y)$, and $d_{k-1}(Y) \nearrow d_{k}(Y)$ for $-2 < k \le 2n' = n$ with $k \not= \mu_1,\mu_2-1$.

Thus we have proved the lemma.
\end{proof}

\begin{lemma}\label{lemma:2n_step}
Let $Y = (\lambda_1,\lambda_2) \in \mathcal{F}(Y_{2,2n'})$.
Let $(i,j) \in Y$, and set $Y' = (\lambda_1',\lambda_2') = Y\setminus h_Y{(i,j)}$.
Then, $\lambda_1'+\lambda_2'=2n'$ if and only if there exists a box $(i',j') \in Y'$ such that $\mathcal{A}(h_Y(i,j)) = \mathcal{A}(h_{Y'}(i',j'))$.
In this case, $Y'':= Y'\setminus h_Y'(i',j')$ is equal to $(2n'-\lambda_2,2n'-\lambda_1)$.
\end{lemma}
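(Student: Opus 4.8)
Throughout I keep $m=2$ and $n=2n'$ (as in the statement) and argue entirely in the diagonal expression. The key structural input is the reflection symmetry of the unimodal numbering about the central diagonal: a box on the $k$-th diagonal carries the number $\mu(k):=\min\{k+2,\,2n'-k\}$, and this ``tent'' function satisfies $\mu(2n'-2-k)=\mu(k)$, attaining its maximum $\hat{\alpha}_{2,n}=n'+1$ exactly at the centre $k=n'-1$. Since $2n'-2-k=n-m-k$, the reflection $k\mapsto n-m-k$ appearing in Lemma \ref{lemma:same_number_hook} is precisely the reflection about $n'-1$.

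The plan is first to recast both sides of the equivalence through $\boldsymbol d$. By Lemma \ref{lemma:hook_and_DRep} there are $-2<l\le r<n$ with $\boldsymbol d(Y)\xrightarrow{l,r}\boldsymbol d(Y')$, and by Lemma \ref{lemma:hook_size} the hook meets each diagonal $k\in[l,r]$ in one box, so $\mathcal A(h_Y(i,j))=\{\!\{\mu(k)\mid l\le k\le r\}\!\}$. I would then prove the reformulation: \emph{a box $(i',j')\in Y'$ with $\mathcal A(h_{Y'}(i',j'))=\mathcal A(h_Y(i,j))$ exists if and only if $\boldsymbol d(Y')_{[\,2n'-2-r,\,2n'-2-l\,]}\in\mathbb D_{2,n}$.} The ``only if'' is exactly Lemma \ref{lemma:same_number_hook} (with $m=2$); for ``if'', Lemma \ref{lemma:hook_and_DRep} produces a hook on the reflected interval whose multiset is $\{\!\{\mu(k)\mid 2n'-2-r\le k\le 2n'-2-l\}\!\}$, and this coincides with $\mathcal A(h_Y(i,j))$ because $\mu(2n'-2-k)=\mu(k)$.

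The core is a bulge computation resting on Lemma \ref{lemma:bulge_part}, which says the left bulges of $Y'=(\lambda_1',\lambda_2')$ sit exactly at positions $\lambda_1'$ and $\lambda_2'-1$. From $\boldsymbol d(Y)\xrightarrow{l,r}\boldsymbol d(Y')$ and Lemma \ref{lemma:bulge_hook}, position $l$ is a left bulge of $Y'$ (so $l\in\{\lambda_1',\lambda_2'-1\}$) while $r+1$ is a right bulge (so $r+1\notin\{\lambda_1',\lambda_2'-1\}$). By the criterion preceding Lemma \ref{lemma:hook_and_DRep}, $\boldsymbol d(Y')_{[\,2n'-2-r,\,2n'-2-l\,]}\in\mathbb D_{2,n}$ holds iff the left endpoint $2n'-2-r$ is a right bulge and $2n'-1-l$ is a left bulge of $Y'$. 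Examining the two cases $l=\lambda_1'$ and $l=\lambda_2'-1$, the left-bulge requirement $2n'-1-l\in\{\lambda_1',\lambda_2'-1\}$ forces a non-integer (hence impossible) in one alternative and is equivalent to $\lambda_1'+\lambda_2'=2n'=n$ in the other. Granting this, the right-bulge requirement at $2n'-2-r$ is automatic: were it to fail, then $2n'-2-r\in\{\lambda_1',\lambda_2'-1\}$ together with $\lambda_1'+\lambda_2'=2n'$ would give $r+1\in\{\lambda_1',\lambda_2'-1\}$, contradicting that $r+1$ is a right bulge. Since $\#(Y')=\lambda_1'+\lambda_2'$, this establishes the asserted equivalence.

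Finally I would identify $Y''$. With $l,r$ now explicit in the case at hand and $\lambda_1'+\lambda_2'=n$, I substitute into $[\,2n'-2-r,\,2n'-2-l\,]$ and compute the two successive reductions of $\boldsymbol d(Y')$ directly. For instance, when the first hook lies in the second row one gets $Y'=(\lambda_1,2n'-\lambda_1)$ and reflected interval $[\,2n'-\lambda_2,\,\lambda_1-1\,]$, which sits entirely in the region where $d(Y')=1$; subtracting $1$ there yields precisely $\boldsymbol d\big((2n'-\lambda_2,\,2n'-\lambda_1)\big)$ by Lemma \ref{lemma:bulge_part}, and the remaining hook positions are handled identically. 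As a consistency check, $\#(Y'')=\#(Y)-2(r-l+1)=4n'-(\lambda_1+\lambda_2)=(2n'-\lambda_2)+(2n'-\lambda_1)$, and $(2n'-\lambda_2,2n'-\lambda_1)$ is the $180^\circ$-rotated complement of $Y$ in $Y_{2,2n'}$, which is consistent with the rotation-invariance of $\alpha_{2,n}$. I expect the main obstacle to be exactly this last bookkeeping: keeping the reflected interval's endpoints inside the admissible range $-m<\cdot\le\cdot<n$ and tracking the bulges cleanly across the boundary cases $\lambda_2=0$ and hooks meeting the first or last admissible column.
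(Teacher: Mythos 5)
Your proposal is correct and follows essentially the same route as the paper: reduce the existence of the second hook to the admissibility of the reflected interval $[\,2n'-2-r,\,2n'-2-l\,]$ via Lemmas \ref{lemma:hook_and_DRep} and \ref{lemma:same_number_hook}, then decide admissibility by locating the left-bulge positions with Lemmas \ref{lemma:bulge_hook} and \ref{lemma:bulge_part}, and finally identify $Y''$ from its diagonal expression. The only cosmetic difference is that you run the bulge bookkeeping for both directions uniformly through the parts of $Y'$ (and compute $Y''$ by direct substitution), whereas the paper phrases the forward direction through the bulges of $Y$ and pins down $Y''$ by applying Lemma \ref{lemma:bulge_part} to $Y''$ itself.
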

\begin{proof}
We first show the ``if'' part.
By Lemma \ref{lemma:hook_and_DRep}, there exist $-2 < l,r < 2n'$ such that $\boldsymbol{d}(Y)\xrightarrow{l,r}\boldsymbol{d}(Y')$.
If there exists a box $(i',j') \in Y'$ such that $\mathcal{A}(h_Y(i,j)) = \mathcal{A}(h_{Y'}(i',j'))$, then it follows from Lemma \ref{lemma:same_number_hook} that $\boldsymbol{d}(Y')_{[2n'-2-r,2n'-2-l]} \in \mathbb{D}_{2,2n'}$.
Note that $2n'-2-l+1 \not= l$.
By Lemmas \ref{lemma:bulge_hook} and \ref{lemma:bulge_part}, the pair $(r,2n'-2-l)$ is equal to $(\lambda_1-1,\lambda_2-2)$ or $(\lambda_2-2,\lambda_1-1)$, and hence $\lambda_1'+\lambda_2'=(\lambda_1+\lambda_2)-(r-l+1)= (\lambda_1+\lambda_2)-(2-2n'+\lambda_1-1+\lambda_2-2+1)= (\lambda_1+\lambda_2)-(2n'+\lambda_1+\lambda_2) =2n'$.

We next show the ``only if'' part.
As above, assume that $\boldsymbol{d}(Y)\xrightarrow{l,r}\boldsymbol{d}(Y')$.
If $\lambda_1'+\lambda_2'=2n'$, then $d_{\lambda_1'-1}(Y') \searrow d_{\lambda_1'}(Y'),d_{2n'-\lambda_1'-2}(Y') \searrow d_{2n'-\lambda_1-1}(Y')$, and $d_{k-1}(Y') \nearrow d_{k}(Y')$ for $-2 < k \le 2n'$ with $k \not= \lambda_1',2n'-\lambda_1'-1$.
By Lemma \ref{lemma:bulge_hook}, we have $l = \lambda_1'$ or $l = 2n'-\lambda_1'-1$.
If $l = \lambda_1'$, then $r \not= 2n'-\lambda_1'-2$, and hence $2n'-2-r \not = \lambda_1'$.
Thus, $\boldsymbol{d}(Y')_{[2n'-2-r,2n'-2-l]} =\boldsymbol{d}(Y')_{[2n'-2-r,2n'-2-\lambda_1']} \in \mathbb{D}_{2,2n'}$.
If $l = 2n'-\lambda_1'-1$, then $r \not= \lambda_1'-1$, and hence $2n'-2-r \not = 2n'-1-\lambda_1'$.
Thus, $\boldsymbol{d}(Y')_{[2n'-2-r,2n'-2-l]} = \boldsymbol{d}(Y')_{[2n'-2-r,\lambda_1'-1]} \in \mathbb{D}_{2,2n'}$.
In both cases, we have $\boldsymbol{d}(Y')_{[2n'-2-r,2n'-2-l]} \in \mathbb{D}_{2,2n'}$, which implies that there exists a box $(i',j') \in Y'$ such that $\mathcal{A}(h_Y(i,j)) = \mathcal{A}(h_{Y'}(i',j'))$ (see Lemma \ref{lemma:same_number_hook}).

Finally, let us show that $Y'':= Y'\setminus h_{Y'}(i',j')$ is equal to $(2n'-\lambda_2,2n'-\lambda_1)$.
By Lemma \ref{lemma:bulge_hook}, we have
\[
\boldsymbol{d}(Y)\xrightarrow{l,r}\boldsymbol{d}(Y')\xrightarrow{2n'-2-r,2n'-2-l}\boldsymbol{d}(Y''), \]
and $d_{l-1}(Y'')\searrow d_{l}(Y''),d_{2n'-2-r-1}(Y'') \searrow d_{2n'-2-r}(Y'')$, and $d_{k}(Y'') \nearrow d_{k}(Y'')$ for $-2 < k \le 2n'$ with $k \not= l,2n'-2-r$.
As seen above, the pair $(r,2n'-2-l)$ is equal to $(\lambda_1-1,\lambda_2-2)$ or $(\lambda_2-2,\lambda_1-1)$.
If $(r,2n'-2-l) = (\lambda_1-1,\lambda_2-2)$, then $l = 2n'-\lambda_2 > 2n'-1-\lambda_1 = 2n'-2-l$.
If $(r,2n'-2-l) = (\lambda_2-2,\lambda_1-1)$, then $l = 2n'-1-\lambda_1 < 2n'-\lambda_2 = 2n'-2-l$.
In both cases, we get $d_{2n'-2-\lambda_1}(Y'') \searrow d_{2n'-1-\lambda_1}(Y''),d_{2n'-\lambda_2-1}(Y'') \searrow d_{2n'-\lambda_2}(Y'')$, and $d_{k}(Y'') \nearrow d_{k}(Y'')$ for $-2 < k \le 2n'$ with $k \not= 2n'-1-\lambda_1,2n'-\lambda_2$.
Hence we obtain $Y'' = (2n'-\lambda_2,2n'-\lambda_1)$ by Lemma \ref{lemma:bulge_part}, as desired.
\end{proof}

For $Y \in \mathcal{F}(Y_{2,2n'})$, we set $OH(Y) := \{Y\setminus h_Y(i,j)\mid (i,j) \in Y\}$;
if $Y=(\lambda_1,\lambda_2)$, then
\begin{eqnarray*}
OH(Y) &=& \{(\lambda'_1,\lambda_2)\mid\lambda_2\le \lambda'_1<\lambda_1\} \cup \{(\lambda_1,\lambda'_2)\mid0\le \lambda'_2<\lambda_2\}\\
&\cup& \{(\lambda_2-1,\lambda'_1)\mid 0\le \lambda'_1<\lambda_2\}.
\end{eqnarray*}
By Lemma \ref{lemma:2n_step}, we can easily show the following lemma.

\begin{lemma}\label{lemma:2n_transition}
In MHRG$(2,2n')$,
\[
\mathcal{T}(Y_{2,2n'}) = \mathcal{F}(Y_{2,2n'})\setminus\{(\lambda'_1,\lambda'_2) \in \mathcal{F}(Y_{2,2n'})\mid\lambda'_1+\lambda'_2 = 2n'\}.
\]
Moreover, for $Y = (\lambda_1,\lambda_2) \in \mathcal{F}(Y_{2,2n'})$,
\begin{enumerate}
\item[(1)] if $\lambda_1+\lambda_2 < 2n'$ , then $\mathcal{O}(Y)=OH(Y)$;
\item[(2)] if $\lambda_1+\lambda_2 > 2n'$ , then $\mathcal{O}(Y) = OH(Y)\setminus\{(\lambda'_1,\lambda'_2) \in \mathcal{F}(Y_{2,2n'})\mid\lambda'_1+\lambda'_2 = 2n'\} \cup \{(2n'-\lambda_2,2n'-\lambda_1)\}$.
\end{enumerate}

\end{lemma}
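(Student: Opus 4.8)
The plan is to deduce everything from Lemma \ref{lemma:2n_step}, which tells us that a single hook removal $Y\to Y'$ forces a second (matched) removal exactly when the intermediate diagram lands on the antidiagonal $A:=\{(\lambda_1',\lambda_2')\mid \lambda_1'+\lambda_2'=2n'\}$, and that in that case the completed move lands on the reflected diagram $(2n'-\lambda_2,2n'-\lambda_1)$. So I would first compute the option set $\mathcal{O}(Y)$ for every $Y=(\lambda_1,\lambda_2)\in\mathcal{F}(Y_{2,2n'})$, i.e.\ establish parts (1) and (2), and only afterwards determine $\mathcal{T}(Y_{2,2n'})$.

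For part (1), if $\lambda_1+\lambda_2<2n'$ then every single hook removal $Y\to Y'$ satisfies $\#(Y')<\#(Y)<2n'$, so $\#(Y')\neq 2n'$; by Lemma \ref{lemma:2n_step} no second removal is triggered, hence each $Y\setminus h_Y(i,j)$ is a complete move and $\mathcal{O}(Y)=OH(Y)$. For part (2), if $\lambda_1+\lambda_2>2n'$ I would split the single hook removals $Y\to Y'$ according to whether $\#(Y')=2n'$: those with $\#(Y')\neq 2n'$ are complete moves lying off $A$, while those with $\#(Y')=2n'$ are forced to continue and, by Lemma \ref{lemma:2n_step}, all complete at the single diagram $(2n'-\lambda_2,2n'-\lambda_1)$, whose box count $4n'-(\lambda_1+\lambda_2)$ is strictly below $2n'$. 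The one point needing care is that at least one removal does land on $A$, so that $(2n'-\lambda_2,2n'-\lambda_1)$ is genuinely an option; this is checked against the explicit list of $OH(Y)$, e.g.\ $(\lambda_1,2n'-\lambda_1)\in OH(Y)$ since $0\le 2n'-\lambda_1<\lambda_2$. Assembling the two types gives $\mathcal{O}(Y)=\bigl(OH(Y)\setminus A\bigr)\cup\{(2n'-\lambda_2,2n'-\lambda_1)\}$, as claimed.

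With the options in hand, the equality for $\mathcal{T}(Y_{2,2n'})$ follows in two directions. For ``$\subseteq$'', the starting diagram $(2n',2n')$ has box count $4n'>2n'$ and hence is off $A$, and parts (1) and (2) show that every option of every off-$A$ position is again off $A$ (a complete single move has box count $\neq 2n'$ by Lemma \ref{lemma:2n_step}, and a forced double move has box count below $2n'$); by induction on the length of a transition, no reachable diagram meets $A$. For ``$\supseteq$'' I would prove reachability by downward induction on $\#(Y)$. Every above-$A$ diagram $Y\neq(2n',2n')$ has an above-$A$ predecessor of box count $\#(Y)+1$, namely $(\lambda_1+1,\lambda_2)$ if $\lambda_1<2n'$ and $(\lambda_1,\lambda_2+1)$ if $\lambda_1=2n'$; in each case the extra box forms a one-box hook whose removal is a complete move (part (2)), so all diagrams above $A$ are reached. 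Every below-$A$ diagram $Y$ is then reached in one forced double move from the above-$A$ diagram $Z=(2n'-\lambda_2,2n'-\lambda_1)$, since part (2) applied to $Z$ produces exactly $Y$ as its reflected option.

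The main obstacle is the reachability direction ``$\supseteq$'', and within it the below-$A$ case: single hook removals cannot cross onto $A$, so those diagrams can only be entered by the forced double move, and one must exhibit the correct above-$A$ predecessor $(2n'-\lambda_2,2n'-\lambda_1)$ and know it is itself reachable (which is why the above-$A$ case must be settled first). Once parts (1) and (2) are in place this is short, so the genuine work is the case analysis in part (2) together with the existence of an antidiagonal single-hook option; the rest is bookkeeping on box counts.
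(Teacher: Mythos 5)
Your proof is correct and follows exactly the route the paper intends: the paper's entire proof is the one-line remark that the lemma follows easily from Lemma \ref{lemma:2n_step}, and your write-up supplies precisely those omitted details (the box-count dichotomy for when a second removal is forced, the verification that $(\lambda_1,2n'-\lambda_1)\in OH(Y)$ so the reflected option really occurs, and the two-stage reachability induction). No gaps.
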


By Lemma \ref{lemma:2n_transition}, the $\mathcal{G}$-value of $Y = (\lambda_1,\lambda_2) \in \mathcal{T}(Y_{2,2n'})$ with $\lambda_1 + \lambda_2 < n = 2n'$ is equal to the $\mathcal{G}$-value of the game position corresponding to $Y$ in the Sato-Welter game (see, e.g., \cite[Theorem 2]{Kawanaka}).
For later use, we list those $Y = (\lambda_1,\lambda_2) \in \mathcal{T}(Y_{2,2n'})$ with $\lambda_1 + \lambda_2 < 2n'$ whose $\mathcal{G}$-values are $0$, $1$, or $2$.
\begin{table}[htb]
\begin{center}
\begin{tabular}{|c|c|c|}\hline
    $\mathcal{G}(Y) = 0$ & $\mathcal{G}(Y) = 1$ & $\mathcal{G}(Y) = 2$ \\ \hline \hline
    $(2i,2i)$ &  \begin{tabular}{c}$(1+4i,4i)$\\$(2+4i,1+4i)$\end{tabular} & \begin{tabular}{c}$(2+4i,4i)$\\$(1+4i,1+4i)$\end{tabular} \\\hline
    \end{tabular}
\end{center}
\caption{$Y = (\lambda_1,\lambda_2) \in \mathcal{F}(Y_{2,2n'})$ with $\lambda_1 + \lambda_2 < 2n'$ whose $\mathcal{G}$-values are $0$, $1$, or $2$.}\label{table:012l}
\end{table}

\begin{theorem}\label{th:2n}
As above, assume that $n$ is even, and set $n' = n/2$.
In MHRG$(2,2n')$, the list of those $Y = (\lambda_1,\lambda_2) \in \mathcal{F}(Y_{2,2n'})$ with $\lambda_1 + \lambda_2 > 2n'$ whose $\mathcal{G}$-values are $0$,$1$ or $2$ is given by Table \ref{table:012u}.
\begin{table}[htb]\footnotesize
\begin{center}
\begin{tabular}{|c||c|c|c|}\hline
    $\!n'\!$  & $\mathcal{G}(Y) = 0$ & $\mathcal{G}(Y) = 1$ & $\mathcal{G}(Y) = 2$ \\ \hline \hline
    $\!4n''\!$ & \begin{tabular}{c}$(n'+1+4i,n'+4i)$ \\ $\!\!\!\!(n'+2+4i,n'+1+4i)\!\!\!\!$\end{tabular} &  \begin{tabular}{c}$(n'+2,n')$\\$(n'+1,n'+1)$ \\ $\!\!\!\!(n'+4+2i,n'+4+2i)\!\!\!\!$\end{tabular} & \begin{tabular}{c}$(n'+2,n'+2)$\\ \!$(n'+3,n')$\! \\ \!$(n'+4,n'+1)$\! \\ $\!\!\!\!(n'+7+4i,n'+6+4i)\!\!\!\!$\\ $\!\!\!\!(n'+8+4i,n'+7+4i)\!\!\!\!$\end{tabular}\\ \hline
    $\!4n''+1\!$ & \begin{tabular}{c}$\!\!\!\!(n'+2+4i,n'+1+4i)\!\!\!\!$ \\ $\!\!\!\!(n'+3+4i,n'+2+4i)\!\!\!\!$\end{tabular} & \begin{tabular}{c}$(n'+2+2i,n'+2i)$\end{tabular} & \begin{tabular}{c}$(n'+1,n')$\\ \!$(n'+2,n'-1)$\! \\ \!$(n'+3,n'+1)$\! \\ $\!\!\!\!(n'+5+2i,n'+5+2i)\!\!\!\!$\end{tabular}\\ \hline
    $\!4n''+2\!$ & \begin{tabular}{c}$(n'+1+4i,n'+4i)$ \\ $\!\!\!\!(n'+2+4i,n'+1+4i)\!\!\!\!$\end{tabular} & \begin{tabular}{c}$\!\!\!\!(n'+2+2i,n'+2+2i)\!\!\!\!$\end{tabular} & \begin{tabular}{c}$\!\!\!\!(n'+3+4i,n'+2+4i)\!\!\!\!$ \\ $\!\!\!\!(n'+4+4i,n'+3+4i)\!\!\!\!$\end{tabular}\\ \hline
    $\!4n''+3\!$ & \begin{tabular}{c}$\!\!\!\!(n'+2+4i,n'+1+4i)\!\!\!\!$ \\ $\!\!\!\!(n'+3+4i,n'+2+4i)\!\!\!\!$\end{tabular} & \begin{tabular}{c}$\!\!\!\!(n'+1+2i,n'+1+2i)\!\!\!\!$\end{tabular} & \begin{tabular}{c}$\!\!\!\!(n'+4+8i,n'+1+8i)\!\!\!\!$ \\ $\!\!\!\!(n'+5+8i,n'+2+8i)\!\!\!\!$ \\ $\!\!\!\!(n'+6+8i,n'+3+8i)\!\!\!\!$ \\ $\!\!\!\!(n'+7+8i,n'+4+8i)\!\!\!\!$\end{tabular} \\ \hline
\end{tabular}
\end{center}
\caption{$Y = (\lambda_1,\lambda_2) \in \mathcal{F}(Y_{2,2n'})$ with $\lambda_1 + \lambda_2 > 2n'$ whose $\mathcal{G}$-values are $0$, $1$, or $2$.}\label{table:012u}
\end{table}
\end{theorem}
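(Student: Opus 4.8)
The plan is to determine the $\mathcal{G}$-values of the above-diagonal positions (those $Y = (\lambda_1,\lambda_2)$ with $\lambda_1+\lambda_2 > 2n'$) by induction on the number of boxes $\lambda_1+\lambda_2$, using the option description in Lemma~\ref{lemma:2n_transition}~(2). The key structural observation is that every option of such a $Y$ falls into one of three types: another above-diagonal position with strictly fewer boxes, a below-diagonal position, or the single reflected position $(2n'-\lambda_2,2n'-\lambda_1)$. Since $\lambda_1+\lambda_2>2n'$ forces $(2n'-\lambda_2)+(2n'-\lambda_1)<2n'$, the reflected position is always below the diagonal, and the below-diagonal values coincide with those of the Sato-Welter game and are therefore known (their $0,1,2$ loci being recorded in Table~\ref{table:012l}). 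Hence the $\mathrm{mex}$ defining $\mathcal{G}(Y)$ refers only to values that are already determined: below-diagonal ones from Sato-Welter, and above-diagonal ones of smaller size from the induction hypothesis.

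First I would pass to the coordinates $x = \lambda_1-n'$, $y=\lambda_2-n'$, in which the anti-diagonal becomes $x+y=0$ and the table entries become families in $i$ of period $4$ (period $8$ in the single subcase $n'\equiv 3$, $\mathcal{G}=2$). This exposes the four-fold periodicity in $n'$ that forces the split into the cases $n'\equiv 0,1,2,3 \pmod 4$. For each residue I would formulate a \emph{complete} claim, assigning to every above-diagonal position the value $0$, $1$, $2$, or ``$\ge 3$''; Table~\ref{table:012u} is then the restriction of this claim to the values $0,1,2$. Proving the stronger complete claim is what closes the induction, because certifying that a position has value exactly $v\in\{0,1,2\}$ requires both exhibiting options of all values $<v$ and ruling out any option of value $v$, and the latter needs the $\{0,1,2\}$-status of every option.

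The inductive step then amounts, in each residue class and for each claimed value, to two checks. For \emph{realizability} I would exhibit, among the first-row reductions, second-row reductions, corner removals, and the reflected position, options attaining each value below the claimed one; here one must verify that the relevant small-value below-diagonal positions of Table~\ref{table:012l} actually lie in $OH(Y)$. For \emph{minimality} I would confirm that no option attains the claimed value, using Table~\ref{table:012l} for below-diagonal options, the induction hypothesis for above-diagonal ones, and a direct evaluation for the reflected position. The periodicity reduces each residue class to finitely many representative values of $i$ together with a shift argument propagating the pattern.

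The main obstacle is the minimality direction together with completeness: showing that every above-diagonal position \emph{not} listed in Table~\ref{table:012u} has $\mathcal{G}$-value at least $3$, i.e.\ genuinely has options of all three values $0,1,2$. This is where the reflection move is most delicate, since it contributes exactly one below-diagonal $\mathcal{G}$-value to the option set, and that value varies with $(\lambda_1,\lambda_2)$ through the Sato-Welter function in a way that interacts with the residue of $n'$; verifying that this interaction produces precisely the periodic pattern of Table~\ref{table:012u}, neither creating spurious small values nor destroying required ones, is the heart of the argument.
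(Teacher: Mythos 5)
Your plan matches the paper's proof in all essentials: both rest on the option description of Lemma~\ref{lemma:2n_transition}(2), on the observation that every non-above-diagonal option (including the reflected position $(2n'-\lambda_2,2n'-\lambda_1)$, which indeed satisfies $(2n'-\lambda_2)+(2n'-\lambda_1)<2n'$) is a below-diagonal Sato--Welter position whose $0,1,2$-loci are recorded in Table~\ref{table:012l}, and on a periodicity-driven induction carried out separately in each residue class of $n'$ modulo $4$, with the same finite base-case verification (e.g.\ the explicit computation of $\mathcal{O}((n'+1,n'))$) left to direct checking. The only difference is organizational: you run a single induction on $\lambda_1+\lambda_2$ determining the full partition into value classes $0,1,2,\ge 3$ at once, while the paper determines $G_0$, then $G_1$, then $G_2$ sequentially, each with its own inner induction --- an equivalent arrangement of the same case analysis.
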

\begin{proof}
We give a proof only for the case of $n' = 4n''$ for $n'' \in \mathbb{N}$; the proofs of the cases $n' = 4n''+1,4n''+2,4n''+3$ for $n'' \in \mathbb{N}_0$ are similar.
We set $G_k := \{(\lambda_1,\lambda_2) \in \mathcal{T}(Y_{2,2n'}) \mid \lambda_1 + \lambda_2 > 2n',\mathcal{G}((\lambda_1,\lambda_2)) = k \}$ for $k \in \mathbb{N}_0$.

First, we determine $G_0$, following the definition of the $\mathcal{G}$-value.
Let $Y = (\lambda_1,\lambda_2) \in \mathcal{T}(Y_{2,2n'})$ with $\lambda_1 + \lambda_2 > 2n'$.
If $\lambda_2 < n'$, and $\lambda_2$ is even (resp., odd), then we deduce that $Y' = (\lambda_2,\lambda_2)$ (resp., $Y' = (\lambda_2-1,\lambda_2-1)$) is contained in $\mathcal{O}(Y)$.
Because $\mathcal{G}(Y') = 0$ by Table \ref{table:012l}, we obtain $Y\notin G_0$.

Now, we see by Lemma \ref{lemma:2n_transition} that
\begin{align*}
\mathcal{O}((n'+1,n')) &=\Bigl(\{(n',n')\} \cup \{(n'+1,\lambda'_2)\mid0\le \lambda'_2<n'\} \\
&\quad \cup \{(n'-1,\lambda'_1)\mid 0\le \lambda'_1<n'\}\Bigr) \\
&\quad\setminus\{(\lambda'_1,\lambda'_2)\mid\lambda'_1+\lambda'_2= 2n'\} \cup \{(n',n'-1)\} \\
&= \{(n'+1,\lambda'_2)\mid0\le \lambda'_2<n'-1\} \\
&\quad \cup \{(n'-1,\lambda'_1)\mid 0\le \lambda'_1<n'\} \cup \{(n',n'-1)\}.
\end{align*}
Note that $n' = 4n''$ is even.
By Table \ref{table:012l} and the argument above, it can be checked that $\mathcal{O}((n'+1,n'))$ has no position whose $\mathcal{G}$-value is $0$.
Thus we get $\mathcal{G}((n'+1,n')) = 0$.
If $Y \in \{(n'+1,n'+1)\} \cup \{(\lambda_1',n')\mid n'+2\le \lambda_1'\le 2n'\} \cup \{(\lambda_1',n'+2)\mid n'+2\le \lambda_1'\le 2n'\}$, then $(n'+1,n') \in \mathcal{O}(Y)$, which implies that $Y\notin G_0$.

Similarly, we see by Lemma \ref{lemma:2n_transition} that
\begin{align*}
\mathcal{O}((n'+2,n'+1)) &=\Bigl(\{(n'+1,n'+1)\} \cup \{(n'+2,\lambda'_2)\mid0\le \lambda'_2<n'+1\} \\
&\quad \cup \{(n',\lambda'_1)\mid 0\le \lambda'_1<n'+1\}\Bigr) \\
&\quad\setminus\{(\lambda'_1,\lambda'_2)\mid\lambda'_1+\lambda'_2 = 2n'\} \cup \{(n'-1,n'-2)\} \\
&= \{(n'+1,n'+1)\} \\
&\quad \cup \Bigl(\{(n'+2,\lambda'_2)\mid0\le \lambda'_2<n'+1\}\setminus\{(n'+2,n'-2)\} \Bigr) \\
&\quad \cup \{(n',\lambda'_1)\mid 0\le \lambda'_1<n'\} \cup \{(n'-1,n'-2)\}.
\end{align*}
By Table \ref{table:012l} and the argument above, we deduce that $\mathcal{O}((n'+2,n'+1))$ has no position whose $\mathcal{G}$-value is $0$.
Thus we get $\mathcal{G}((n'+2,n'+1)) = 0$.
If $Y \in \{(n'+2,n'+2)\} \cup \{(\lambda_1',n'+1)\mid n'+3\le \lambda_1'\le 2n'\} \cup \{(\lambda_1',n'+3)\mid n'+3\le \lambda_1'\le 2n'\}$, then $(n'+2,n'+1) \in \mathcal{O}(Y)$, which implies that $Y\notin G_0$.
Therefore, for $Y = (\lambda_1,\lambda_2) \in \mathcal{F}(Y_{2,2n'})$ with $n' \le \lambda_2 \le n'+3$ and $\lambda_2 \le \lambda_1 \le 2n'$, 
\begin{equation}\label{eq:g0}
Y\in G_0\text{ if and only if }Y=(n'+1,n'),(n'+2,n'+1).
\end{equation}

Let $i\in\mathbb{N}$ with $n'+4+4i \le 2n'$.
By Lemma \ref{lemma:2n_transition}, $(n'+4+4i,n'+4+4i) \rightarrow (n'-4-4i,n'-4-4i)$.
Since $\mathcal{G}((n'-4-4i,n'-4-4i)) = \mathcal{G}((4n''-4-4i,4n''-4-4i)) = 0$ by Table \ref{table:012l}, we get $\mathcal{G}((n'+4+4i,n'+4+4i)) \not= 0$.
Furthermore, in exactly the same way as \eqref{eq:g0}, it can be verified that for $Y = (\lambda_1,\lambda_2) \in \mathcal{F}(Y_{2,2n'})$ with $n'+4i \le \lambda_2 \le n'+3+4i$ and $\lambda_2 \le \lambda_1 \le 2n'$, $Y \in G_0$ if and only if $Y=(n'+1+4i,n'+4i),(n'+2+4i,n'+1+4i)$.
Therefore, we obtain 
\begin{align*}
G_0 &= \Bigl(\{(n'+1+4i,n'+4i) \mid i \ge 0 \} \cup \{(n'+2+4i,n'+1+4i) \mid i \ge 0 \}\Bigr)\\
&\quad \cap \mathcal{F}(Y_{2,2n'}),
\end{align*}
as desired.

Next, we determine $G_1$.
Let $Y = (\lambda_1,\lambda_2) \in \mathcal{T}(Y_{2,2n'})$ with $\lambda_1 + \lambda_2 > 2n'$.
In a way similar to $G_0$, if $\lambda_2 < n'$, then $Y\notin G_1$.
By Table \ref{table:012l} and $\mathcal{G}((n'+1,n')) = 0$,  we deduce that $\mathcal{O}((n'+2,n'))$ and $\mathcal{O}((n'+1,n'+1))$ have no position whose $\mathcal{G}$-value is $1$, and have a position $(n'+1,n')$, whose $\mathcal{G}$-value is $0$.
Thus we get $\mathcal{G}((n'+2,n')) = \mathcal{G}((n'+1,n'+1)) = 1$.
If $Y \in \{(\lambda_1',n')\mid n'+2\le \lambda_1'\le 2n'\} \cup\{(\lambda_1',n'+1)\mid n'+1\le \lambda_1'\le 2n'\} \cup \{(\lambda_1',n'+2)\mid n'+1\le \lambda_1'\le 2n'\} \cup \{(\lambda_1',n'+3)\mid n'+2\le \lambda_1'\le 2n'\}$, then $(n'+2,n') \in \mathcal{O}(Y)$ or $(n'+1,n'+1) \in \mathcal{O}(Y)$, which implies that $Y\notin G_1$.
Therefore, for $Y = (\lambda_1,\lambda_2) \in \mathcal{F}(Y_{2,2n'})$ with $n' \le \lambda_2 \le n'+3$ and $\lambda_2 \le \lambda_1 \le 2n'$, $Y\in G_1$ if and only if $Y=(n'+2,n'),(n'+1,n'+1)$.

We see by Lemma \ref{lemma:2n_transition} that
\begin{align*}
\mathcal{O}((n'+4,n'+4)) &=\Bigl(\{(n'+4,\lambda'_2)\mid0\le \lambda'_2<n'+3\} \\
&\quad \cup \{(n'+3,\lambda'_1)\mid 0\le \lambda'_1<n'+3\}\Bigr) \\
&\quad\setminus\{(\lambda'_1,\lambda'_2)\mid\lambda'_1+\lambda'_2 = 2n'\} \cup \{(n'-4,n'-4)\}.
\end{align*}
By Table \ref{table:012l} and the argument above, we deduce that $\mathcal{O}((n'+2,n'+1))$ has no position whose $\mathcal{G}$-value is $1$, and have a position $(n'-4,n'-4)$, whose $\mathcal{G}$-value is $0$.
Thus we get $\mathcal{G}((n'+4,n'+4)) = 1$.
If $Y \in \{(\lambda_1',n'+4)\mid n'+5\le \lambda_1'\le 2n'\} \cup \{(\lambda_1',n'+5)\mid n'+5\le \lambda_1'\le 2n'\})$, then $(n'+4,n'+4) \in \mathcal{O}(Y)$, which implies that $Y\notin G_1$.
Therefore, for $Y = (\lambda_1,\lambda_2) \in \mathcal{F}(Y_{2,2n'})$ with $n'+4 \le \lambda_2 \le n'+5$ and $\lambda_2 \le \lambda_1 \le 2n'$, $Y\in G_1$ if and only if $Y=(n'+4,n'+4)$.
Similarly, for each $i\in \mathbb{N}$ (with $n'+4+2i\le 2n'$), it can be verified that for $Y = (\lambda_1,\lambda_2) \in \mathcal{F}(Y_{2,2n'})$ with $n'+4+2i \le \lambda_2 \le n'+5+2i$ and $\lambda_2 \le \lambda_1 \le 2n'$, $Y \in G_1$ if and only if $Y=(n'+4+2i,n'+4+2i)$.
Therefore, we obtain 
\begin{align*}
G_1 &= \Bigl(\{(n'+2,n'), (n'+1,n'+1)\} \cup \{(n'+4+2i,n'+4+2i) \mid i \ge 0 \}\Bigr)\\
&\quad \cap \mathcal{F}(Y_{2,2n'}),
\end{align*}
as desired.

Finally, we determine $G_2$.
Let $Y = (\lambda_1,\lambda_2) \in \mathcal{T}(Y_{2,2n'})$ with $\lambda_1 + \lambda_2 > 2n'$.
In the same manner as for $G_0$ and $G_1$, we determine $G_2$ as follows.
\begin{itemize}
    \item If $\lambda_2 < n'$, then $Y\notin G_2$.
    \item If $n' \le \lambda_2 \le n'+5$ and $\lambda_2 \le \lambda_1 \le 2n'$, then $Y\in G_1$ if and only if $Y=(n'+2,n'+2),(n'+3,n'),(n'+4,n'+1)$.
    \item For each $i\in \mathbb{N}_0$ (with $n'+6+4i\le 2n'$), if $n'+6+4i \le \lambda_2 \le n'+9+4i$ and $\lambda_2 \le \lambda_1 \le 2n'$, then $Y \in G_0$ if and only if $Y=(n'+7+4i,n'+6+4i),(n'+8+4i,n'+7+4i)$.
\end{itemize}
Therefore, we obtain 
\begin{align*}
G_2 &= \Bigl(\{(n'+2,n'+2),(n'+3,n'),(n'+4,n'+1)\} \\
&\quad \cup \{(n'+7+4i,n'+6+4i) \mid i \ge 0 \} \\
&\quad \cup \{(n'+8+4i,n'+7+4i) \mid i \ge 0 \}\Bigr) \cap \mathcal{F}(Y_{2,2n'}), 
\end{align*}
as desired.
This complete the proof of Theorem \ref{th:2n}.
\end{proof}

The following is an immediate consequence of Theorem \ref{th:2n}, together with theorem \ref{th:game_cor}.

\begin{corollary}\label{cor:2n+1}
Let $n \ge 2$.
In MHRG$(2,n)$, the $\mathcal{G}$-value of the starting position $Y_{2,n}$ is given as follows: 
\begin{equation*}
\mathcal{G}(Y_{2,n}) = \left\{
\begin{aligned}
    3\ &\text{ if } n = 2,3,\\
    2\ &\text{ if } n \equiv 2,3 \text{ mod } 8,\\
    1\ &\text{ otherwise}.
\end{aligned}\right.
\end{equation*}
\end{corollary}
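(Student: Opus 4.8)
The plan is to combine Theorem~\ref{th:game_cor} (which disposes of odd $n$) with a direct lookup in Table~\ref{table:012u} of Theorem~\ref{th:2n}, applied to the single position that matters here, the starting position $Y_{2,n}$ itself. First I would reduce to the case that $n$ is even: if $n = 2n'+1$ is odd with $n\ge 3$, then $m+(n-1) = 2+2n'$ is even and $m = 2 \le 2n' = n-1$, so Theorem~\ref{th:game_cor} applied to the pair $(2, 2n')$ gives $\mathcal{G}(Y_{2,2n'}) = \mathcal{G}(Y_{2,2n'+1})$. Hence it suffices to compute $\mathcal{G}(Y_{2,2n'})$ for $n'\ge 1$, and the value at the odd neighbour $2n'+1$ is obtained by shifting the residue $\bmod\,8$ by one.

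Next I would observe that the starting position is the full rectangle $Y_{2,2n'} = (2n',2n')$, which satisfies $\lambda_1+\lambda_2 = 4n' > 2n'$ for $n'\ge 1$; thus it lies in the ``upper'' region governed by Table~\ref{table:012u}, and it is moreover a \emph{diagonal} position ($\lambda_1 = \lambda_2$). The key step is then to scan, in each residue class $n'\equiv 0,1,2,3 \pmod 4$, only the diagonal entries of that table and solve for the index $i$ making such an entry equal to $(2n',2n')$. Concretely: for $n' = 4n''$ the entry $(n'+4+2i,n'+4+2i)$ in the $\mathcal{G}=1$ list matches with $i = 2n''-2 \ge 0$ (valid since $n'\ge 4$); for $n' = 4n''+2$ the $\mathcal{G}=1$ entry $(n'+2+2i,n'+2+2i)$ matches with $i = 2n'' \ge 0$; for $n' = 4n''+3$ the $\mathcal{G}=1$ entry $(n'+1+2i,n'+1+2i)$ matches with $i = 2n''+1 \ge 0$; and for $n' = 4n''+1$ with $n''\ge 1$ the $\mathcal{G}=2$ entry $(n'+5+2i,n'+5+2i)$ matches with $i = 2n''-2 \ge 0$. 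This gives $\mathcal{G}(Y_{2,2n'}) = 1$ in the first three classes and $\mathcal{G}(Y_{2,2n'}) = 2$ in the fourth. Translating $n'\pmod 4$ into $n = 2n'\pmod 8$ and appending the odd values via the shift above yields exactly the dichotomy $2$ (for $n\equiv 2,3 \pmod 8$) versus $1$ (otherwise).

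The one case left uncovered is $n' = 1$ (that is, $n''=0$ in the class $n'\equiv 1 \pmod 4$), where the index $i = 2n''-2 = -2$ is negative; equivalently the diagonal position $(2,2) = (n'+1,n'+1)$ appears in none of the $\mathcal{G}=0,1,2$ lists. This is precisely where $\mathcal{G}$ jumps to $3$. Here I would finish by computing $\mathcal{O}((2,2))$ by hand from Lemma~\ref{lemma:2n_transition}(2): one gets $\mathcal{O}((2,2)) = \{(0,0),(1,0),(2,1)\}$, with $\mathcal{G}$-values $0$, $1$ (Sato--Welter, via Table~\ref{table:012l}, where $(1,0)=(1+4\cdot 0,4\cdot 0)$), and $2$ (since $(2,1)=(n'+1,n')$ lies in the $\mathcal{G}=2$ list), so $\mathcal{G}((2,2)) = \mathrm{mex}\{0,1,2\} = 3$. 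Theorem~\ref{th:game_cor} with $(m,n)=(2,2)$ then propagates this to $n=3$, giving the two sporadic values.

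The hard part is conceptual rather than computational: Table~\ref{table:012u} only certifies $\mathcal{G}$-values $0,1,2$, so membership of $(2n',2n')$ in one of those lists immediately pins the value down, whereas \emph{absence} from all three lists yields only $\mathcal{G}\ge 3$. The main obstacle is therefore to recognize that the diagonal-entry matching fails exactly at $n'=1$ and to treat the resulting small cases $n=2,3$ by the explicit mex computation above, confirming the value is exactly $3$ and not larger; the remaining work is routine bookkeeping of the modular arithmetic carrying $n'\pmod 4$ over to $n\pmod 8$.
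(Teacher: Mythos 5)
Your proposal is correct and follows the same route as the paper's (very terse) proof: reduce odd $n$ to even $n$ via Theorem \ref{th:game_cor}, read off the value of the diagonal position $(2n',2n')$ from Table \ref{table:012u} of Theorem \ref{th:2n}, and settle $n=2,3$ by a direct mex computation. Your index-matching across the four residue classes of $n'$ and the explicit check that $\mathcal{O}((2,2))=\{(0,0),(1,0),(2,1)\}$ forces $\mathcal{G}((2,2))=3$ are exactly the details the paper leaves to the reader.
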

\begin{proof}
We can easily calculate the $\mathcal{G}$-value of the starting position in the cases that $n = 2,3$.
In the other case, we can prove by Theorem \ref{th:2n} and Theorem \ref{th:game_cor}.
\end{proof}


\section{Relation between MHRG\ and\ HRG in terms of Shifted Young Diagrams}
\subsection{Shifted Young Diagrams}
Let us recall shifted Young diagrams (for the details, see, e.g., \cite{Olsson}).
Let $m\in \mathbb{N}$, and let $\lambda_1,\ldots,\lambda_m \in \mathbb{N}$ be such that $\lambda_1> \cdots>\lambda_m >0$.
The set $S=(\lambda_1,\ldots,\lambda_m):= \{(i,j)\in \mathbb{N}^2\mid i\leq j,\ 1\le i \le m,\ 1\le j \le \lambda_i\}$
is called the shifted Young diagram corresponding to $(\lambda_1,\ldots,\lambda_m)$.
An element of the shifted Young diagram is called a box, and the shifted Young diagram is described in terms of boxes as follows.

\ytableausetup{centertableaux, mathmode, boxsize=2.5em}
\begin{center}
$S=(7,6,4,3,2)=$
\begin{ytableau}
    (1,1)&(1,2)&(1,3)&(1,4)&(1,5)&(1,6)&(1,7) \\
    \none&(2,2)&(2,3)&(2,4)&(2,5)&(2,6)&(2,7) \\
    \none&\none&(3,3)&(3,4)&(3,5)&(3,6) \\
    \none&\none&\none&(4,4)&(4,5)&(4,6) \\
    \none&\none&\none&\none&(5,5)&(5,6) \\
\end{ytableau}
\end{center}
For $i\in \mathbb{N}$, the subset $\{(i,j)\mid j\in \mathbb{N}\}\cap S$ of $S$ is called the $i$-th row of $S$.
Similarly, for $j\in \mathbb{N}$, the subset $\{(i,j)\mid i\in \mathbb{N}\}\cap S$ of $S$ is called the $j$-th column of $S$. 
We call $h(S):=\mathrm{max}\{i \mid (i,j) \in S\}$ the height of $S$.

For a shifted Young diagram $S$, let $\mathcal{F}(S)$ denote the set of all shifted Young diagrams contained in $S$.

\subsection{Hooks of a Shifted Young Diagram}

\begin{definition}\label{def:Shifted_Hook}
For a box $(i,j)$ of a shifted Young diagram $S$, we define 
\begin{eqnarray*}
    \mathrm{arm}_S{(i,j)}&:=& \{(i',j')\in S\mid i=i', j<j'\},\\
    \mathrm{leg}_S{(i,j)}&:=& \{(i',j')\in S\mid i<i', j=j'\},\\
    \mathrm{tail}_S{(i,j)}&:=& \{(i',j')\in S\mid j+1=i', j<j'\},\\
    h_S{(i,j)}&:=& \{(i,j)\}\sqcup \mathrm{arm}_S{(i,j)}\sqcup \mathrm{leg}_S{(i,j)}\sqcup \mathrm{tail}_S{(i,j)}.
\end{eqnarray*}
The set $h_S{(i,j)}$ is called the hook corresponding to the box $(i,j)$.
\end{definition}

\begin{example}\label{ex:shifted_hooks}
In the figures below, the shadowed boxes form the hook corresponding to the box $v=(i,j)$. 
\ytableausetup{centertableaux, mathmode, boxsize=1.0em}
\begin{center}
(a):
\begin{ytableau}
    \ &\ &\ &\ &\ &\ &\ \\
    \none&\ &*(gray)v &*(gray) &*(gray) &*(gray) &*(gray) \\
    \none&\none&*(gray) &\ &\ &\ \\
    \none&\none&\none&*(gray) &*(gray) &*(gray) \\
    \none&\none&\none&\none&\ &\ \\
\end{ytableau}\qquad
(b):
\begin{ytableau}
    \ &\ &\ &\ &\ &\ &\ \\
    \none&\ &\ &\ &*(gray)v &*(gray) &*(gray) \\
    \none&\none&\ &\ &*(gray) &\ \\
    \none&\none&\none&\ &*(gray) &\ \\
    \none&\none&\none&\none&*(gray) &\ \\
\end{ytableau}\qquad
(c):
\begin{ytableau}
    \ &\ &\ &\ &\ &\ &\ \\
    \none&\ &\ &\ &\ &*(gray)v &*(gray) \\
    \none&\none&\ &\ &\ &*(gray) \\
    \none&\none&\none&\ &\ &*(gray) \\
    \none&\none&\none&\none&\ &*(gray) \\
\end{ytableau}
\end{center}
\end{example}

\begin{definition}\label{def:Shifted_Hook_removing}
For a box $(i,j)$ of a shifted Young diagram $S$, we remove the hook $h_S{(i,j)}$ corresponding to the box $(i,j)$ as follows:
\begin{enumerate}
\item Remove all boxes in the hook $h_S{(i,j)}$.
\item Move each box $(i',j')$ satisfying $j+1 > i'>i$ and $j'>j$ to $(i'-1,j'-1)$.
\item Move each box $(i',j')$ satisfying $i'>j+1$ to $(i'-2,j'-2)$.
\end{enumerate}
\end{definition}

\begin{example}\label{ex:Shifted_Hook_removing}
If we remove the hook corresponding to the box $(2,3)$ from the shifted Young diagram $S= (7,6,4,3,2)$, then we get $S'=(7,4,2)$.\\
\end{example}

\ytableausetup{centertableaux, mathmode, boxsize=1.2em}
\begin{center}
\begin{ytableau}
    \ &\ &\ &\ &\ &\ &\ \\
    \none&\ &*(gray)\ &*(gray)\ &*(gray)\ &*(gray)\ &*(gray)\ \\
    \none&\none&*(gray)\ &\ &\ &\ \\
    \none&\none&\none&*(gray)\ &*(gray)\ &*(gray)\ \\
    \none&\none&\none&\none&\ &\ \\
\end{ytableau}
$\to$
\begin{ytableau}
    \ &\ &\ &\ &\ &\ &\ \\
    \none&\ &\none[\nwarrow]&\none&\none&\none[\nwarrow]&\none \\
    \none&\none&\none[\nwarrow] &\ &\ &\ \\
    \none&\none&\none&\none[\nwarrow] &\none &\none[\nwarrow] \\
    \none&\none&\none&\none[\nwarrow]&\ &\ \\
\end{ytableau}
\vspace{5mm}
$\to$
\begin{ytableau}
    \ &\ &\ &\ &\ &\ &\ \\
    \none&\ &\ &\ &\ \\
    \none&\none&\ &\ \\
    \none\\
    \none\\
\end{ytableau}\\
\end{center}


\if0
\begin{example}\label{ex:Corresponding_of_SYDiagram_and_TTurtles}
In the case of $(2,3,4,6,7)$ to $(2,4,7)$.\\
\ytableausetup{centertableaux, mathmode, boxsize=1.5em}
\begin{center}
\begin{ytableau}
    \ &\ &\ &\ &\ &\ &\ \\
    \none&\ &*(gray)\ &*(gray)\ &*(gray)\ &*(gray)\ &*(gray)\ \\
    \none&\none&*(gray)\ &\ &\ &\ \\
    \none&\none&\none&*(gray)\ &*(gray)\ &*(gray)\ \\
    \none&\none&\none&\none&\ &\ \\
\end{ytableau}
$\longleftrightarrow$
\begin{ytableau}
\none[\scalebox{0.75}{1}]&\none[\scalebox{0.75}{2}]&\none[\scalebox{0.75}{3}]&\none[\scalebox{0.75}{4}]&\none[\scalebox{0.75}{5}]&\none[\scalebox{0.75}{6}]&\none[\scalebox{0.75}{7}]\\
    \circ&\bullet&\bullet&\bullet&\circ&\bullet&\bullet \\
\end{ytableau}\\
\vspace{11pt}
\begin{ytableau}
    \ &\ &\ &\ &\ &\ &\ \\
    \none&\ &\ &\ &\ \\
    \none&\none&\ &\
\end{ytableau}
$\longleftrightarrow$
\begin{ytableau}
\none[\scalebox{0.75}{1}]&\none[\scalebox{0.75}{2}]&\none[\scalebox{0.75}{3}]&\none[\scalebox{0.75}{4}]&\none[\scalebox{0.75}{5}]&\none[\scalebox{0.75}{6}]&\none[\scalebox{0.75}{7}]\\
    \circ&\bullet&\circ&\bullet&\circ&\circ&\bullet \\
\end{ytableau}
\end{center}
\end{example}
\fi

\begin{definition}\label{HRG}
The rules of Hook Removing Game (HRG for short) in teams of shifted Young diagrams are as follows.
\begin{enumerate}
    \item[(HS1)] The game is played by two players.
    \item[(HS2)] The two players alternately make a move.
    \item[(HS3)] Given a shifted Young diagram $S$, the player chooses a box $(i,j)\in S$, and remove the hook $h_S{(i,j)}$ corresponding to the box $(i,j)$ from $S$. 
    \item[(HS4)] The player who makes the empty shifted Young diagram $\emptyset$ wins.
\end{enumerate}
\end{definition}

We denote HRG (in terms of shifted Young diagrams) whose starting position is a shifted Young diagram $S$ by HRG$(S)$.
It is clear from the definition of HRG$(S)$ that $\mathcal{F}(S)$ is identical to the set of all positions in HRG$(S)$.


\begin{proposition}\label{pr:shifted}
Let $S=(\lambda_1,\lambda_2,\ldots,\lambda_n)$ be a shifted Young diagram, and let $T$ be a shifted Young diagram containing $S$. 
The $\mathcal{G}$-value of $S$ in HRG$(T)$ is equal to
\begin{center}
$\mathcal{G}(S)=\displaystyle\bigoplus_{1\leq i\leq n}\lambda_i$,
\end{center}
where $\bigoplus_i^{}{a_i}$ denotes the nim-sum (the addition of numbers in binary form without carry) of all $a_i$'s.
\end{proposition}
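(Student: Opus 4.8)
The plan is to prove the formula by induction on the number $\#(S)$ of boxes of $S$, after first recording a purely combinatorial description of how one hook removal changes the multiset of parts. Concretely, I would prove as a lemma that a shifted Young diagram $S'$ is an option of $S=(\lambda_1,\ldots,\lambda_n)$ in HRG if and only if its set of parts is obtained from $\{\lambda_1,\ldots,\lambda_n\}$ by exactly one of the two operations: (i) replacing a single part $\lambda_i$ by some $\mu$ with $0\le \mu<\lambda_i$ and $\mu\notin\{\lambda_1,\ldots,\lambda_n\}$ (where $\mu=0$ means simply deleting $\lambda_i$), or (ii) deleting two distinct parts $\lambda_i,\lambda_j$. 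The point is that a hook $h_S(i,j)$ with empty tail (only arm and leg) realizes operation (i), whereas a hook with nonempty tail realizes operation (ii), and that after the up-and-left reindexing prescribed in Definition \ref{def:Shifted_Hook_removing} the net effect on the parts is exactly as described.

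I expect this lemma to be the main obstacle. Establishing it requires a careful case analysis of Definition \ref{def:Shifted_Hook_removing}: boxes below the removed hook are shifted up-and-left by one step or by two (depending on whether they lie above or below row $j+1$), and one must check that the resulting shape is again a strict partition with the claimed parts, that every pair of parts can actually be deleted by a suitable choice of box, and that no transformation other than (i) and (ii) occurs. This is the classical theory of bars of strict partitions, so I would set up the bijection between shifted diagrams and their sets of parts and track bar lengths through the removal, using the height $h(S)$ to keep the tail bookkeeping under control.

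Granting the lemma, the inductive step is a short nim-arithmetic computation. Write $g:=\bigoplus_{1\le i\le n}\lambda_i$, and assume inductively that $\mathcal{G}(S')=\bigoplus_k\mu_k$ for every option $S'$ with parts $\mu_k$. First, no option has $\mathcal{G}$-value $g$: a type-(i) option has $\mathcal{G}$-value $g\oplus\lambda_i\oplus\mu\neq g$ since $\mu\neq\lambda_i$, and a type-(ii) option has $\mathcal{G}$-value $g\oplus\lambda_i\oplus\lambda_j\neq g$ since $\lambda_i\neq\lambda_j$. Second, every $v$ with $0\le v<g$ is attained: let $b$ be the highest bit in which $v$ and $g$ differ, so $g$ has a $1$ and $v$ a $0$ in bit $b$; choose $i$ with $\lambda_i$ having a $1$ in bit $b$ (possible since $g$ does), and set $\mu:=\lambda_i\oplus g\oplus v$. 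Then $\mu<\lambda_i$ and $g\oplus\lambda_i\oplus\mu=v$. If $\mu\notin\{\lambda_1,\ldots,\lambda_n\}$, the type-(i) move $\lambda_i\mapsto\mu$ has $\mathcal{G}$-value $v$; if instead $\mu=\lambda_j$ for some $j$ (necessarily $j\neq i$, as $\mu<\lambda_i$), the type-(ii) move deleting $\lambda_i,\lambda_j$ has $\mathcal{G}$-value $g\oplus\lambda_i\oplus\lambda_j=v$. Hence $\mathcal{G}(S)=\mathrm{mex}\{\mathcal{G}(S')\mid S\to S'\}=g$, completing the induction.

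Finally I would note that enlarging the board from $S$ to $T\supseteq S$ does not change the options reachable from $S$, so the $\mathcal{G}$-value of $S$ in HRG$(T)$ equals its value in HRG$(S)$ and the displayed formula holds in HRG$(T)$. The crux is therefore the options lemma; it is worth emphasizing that the type-(ii) two-part moves, arising precisely from the tail of a shifted hook, are exactly what rescues the clean nim-sum formula in the collision case $\mu=\lambda_j$, which for ordinary Young diagrams would instead force a Welter-type correction.
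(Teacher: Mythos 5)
Your proposal is correct, but it takes a genuinely different route from the paper: the paper does not actually prove Proposition \ref{pr:shifted} at all, remarking only that the formula ``may be well-known for experts'' and deferring either to Kawanaka's results or to the isomorphism between HRG$(S)$ and Turning Turtles. What you supply instead is a self-contained argument, and its two halves have different status. The nim-arithmetic induction is complete and airtight: the mex computation (no option has value $g$ because a type-(i) move changes exactly one summand and a type-(ii) move removes two distinct summands; every $v<g$ is reached by the highest-differing-bit construction, with the collision case $\mu=\lambda_j$ absorbed by a type-(ii) move) is exactly the classical Turning Turtles argument, so in effect you are unpacking the isomorphism the paper merely cites. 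The options lemma is the genuine content, and you state it correctly --- it is the classical description of bars of strict partitions (Olsson, Chapter 4): an empty-tail hook at $(i,j)$ acts as an ordinary hook on the right-half Young diagram and replaces the single part $\lambda_i$ by some $\mu<\lambda_i$ with $\mu\notin\{\lambda_1,\ldots,\lambda_n\}$, while a hook at $(i,j)$ with nonempty tail deletes the two parts $\lambda_i$ and $\lambda_{j+1}$, and every pair $\lambda_i,\lambda_k$ with $i<k$ is so reachable because strict decrease of the parts forces $\lambda_i+i>k$, hence $(i,k-1)\in S$. You honestly flag that you only sketch this lemma; to make the proof fully self-contained you would need to carry out the reindexing bookkeeping of Definition \ref{def:Shifted_Hook_removing} and verify the ``only if'' direction (no move other than (i) and (ii) occurs), which matters because the mex argument needs the option set exactly, not just a subset. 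Since that verification is standard bar combinatorics and the paper itself outsources even more, I would not call this a gap in the approach, but it is the one place where your write-up is an outline rather than a proof. Your closing observation that the ambient board $T\supseteq S$ is irrelevant to $\mathcal{O}(S)$ is correct and worth keeping, as is the remark that the two-part deletions coming from the tail are precisely what eliminate the Welter-type correction present for ordinary Young diagrams.
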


This formula may be well-known for experts;
we deduce this formula from the results of \cite{Kawanaka}, or by the fact that HRG$(S)$ is isomorphic to Turning Turtles (for Turning Turtles, see, e.g., \cite[page 182]{Siegel}).


\subsection{Diagonal Expression of a Shifted Young Diagram}

We explain the diagonal expression for shifted Young diagrams.
Fix $n\in\mathbb{N}$. 
We write an element $\boldsymbol{b} \in \mathbb{N}_0^{n+1}$ as $\boldsymbol{b} = [b_0,\ldots,b_n]$.
We denote by $\mathbb{SD}_{n}\subset\mathbb{N}_0^{n+1}$ the set of all elements $\boldsymbol{b}=[b_0,\ldots,b_n] \in \mathbb{N}_0^{n+1}$ with $b_n= 0$ satisfying $0\leq b_k-b_{k+1}\leq 1\ \mathrm{for}\ 0\leq k <n$.

Let $S_n$ denote the shifted Young diagram corresponding to $(n,n-1,n-2,\ldots,2,1)$. 
Let $S\in\mathcal{F}(S_n)$.
We set $d_k = d_k(S) := \#\{(i,j)\in S\mid j-i=k\}$ for $k \in \mathbb{Z}$.
Note that if $k \ge n$, then $d_k = 0$.
As Proposition \ref{prop:DRep}, we deduce that $\boldsymbol{sd}(S)=\boldsymbol{sd}_n(S):= [d_{0}(S),\ldots,d_{n}(S)]\in \mathbb{SD}_{n}$ for $S \in \mathcal{F}(S_n)$, and the map $\boldsymbol{sd}=\boldsymbol{sd}_n:\mathcal{F}(S_n)\to \mathbb{SD}_{n}$, $S\mapsto \boldsymbol{sd}(S)$, is bijective.
\begin{definition}\label{def:Shifted_DRep}
We call $\boldsymbol{sd}(S)=\boldsymbol{sd}_n(S)$ the diagonal expression of $S\in \mathcal{F}(S_n)$.
\end{definition}

Let $\boldsymbol{b} = [b_0,\ldots, b_n] \in \mathbb{SD}_{n}$, $\boldsymbol{b'} = [b'_0,\ldots, b'_n] \in \mathbb{N}_0^{n+1}$, and $0 \leq l \le r < n$.
If

\begin{equation*}
b'_k = \left\{
\begin{aligned}
    b_{k}-1\ &\text{ if } l \le k \le r,\\
    b_k\ &\text{ otherwise},\\
\end{aligned}\right.
\end{equation*}
then we write $\boldsymbol{b}\xrightarrow{l,r}\boldsymbol{b'}$.
If

\begin{equation*}
b'_k = \left\{
\begin{aligned}
    b_{k}-2\ &\text{ if } 0 \le k \le r'<r,\\
    b_{k}-1\ &\text{ if } r' < k \le r,\\
    b_k\ &\text{ otherwise},
\end{aligned}\right.
\end{equation*}
then we write $\boldsymbol{b}\xrightarrow{0,r}\xrightarrow{0,r'}\boldsymbol{b'}$ (or $\boldsymbol{b}\xrightarrow{0,r'}\xrightarrow{0,r}\boldsymbol{b'}$).
Note that if

\begin{equation*}
b'_k = \left\{
\begin{aligned}
    b_{k}-2\ &\text{ if } 0 \le k \le r'<r,\\
    b_{k}-1\ &\text{ if } r' < k \le r,\\
    b_k\ &\text{ otherwise},
\end{aligned}\right.
\end{equation*}

then $\boldsymbol{b'}\in \mathbb{SD}_n$.

\begin{lemma}\label{lemma:hook_and_shifted}
Let $S,S'\in \mathcal{F}(S_n)$.
The following are equivalent.
\begin{itemize}
    \item[(1)] There exists a box $(i,j)\in S$ such that $S' = S\setminus h_S{(i,j)}$.
    \item[(2)] There exist $0\leq l\le r < n$ such that $\boldsymbol{sd}(S)\xrightarrow{l,r}\boldsymbol{sd}(S')$, or there exist $0 \le r'<r<n$ such that $\boldsymbol{sd}(S)\xrightarrow{0,r}\xrightarrow{0,r'}\boldsymbol{sd}(S')$.
\end{itemize}
\end{lemma}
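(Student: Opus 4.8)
The plan is to reduce the whole statement to a diagonal-by-diagonal count of the boxes of a single hook, exactly as in the proof of Lemma \ref{lemma:hook_and_DRep}. The crucial preliminary observation is that each of the three operations in Definition \ref{def:Shifted_Hook_removing} moves a surviving box $(i',j')$ either to $(i'-1,j'-1)$ or to $(i'-2,j'-2)$, and hence preserves the diagonal index $j'-i'$. Consequently every box of $S$ that is not deleted keeps its diagonal, so that for all $k$ one has $d_k(S)-d_k(S')=\#\{(i',j')\in h_S(i,j)\mid j'-i'=k\}$. I would record this identity first; note that the count on the right may be $2$ (this is what ultimately produces the double step).

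For (1)$\Rightarrow$(2) I would analyze the shape of $h_S(i,j)$. The box $(i,j)$ together with its arm and leg occupies, with exactly one box on each diagonal, the contiguous range from $l$ (the diagonal of the bottom box of column $j$) up to $r$ (the diagonal of the rightmost box of row $i$); here $l\ge 0$ since every box of a shifted diagram satisfies $j'\ge i'$, and $r\le n-1$ since $S\subseteq S_n$. The tail, when nonempty, lies in row $j+1$ and runs rightward from column $j+1$, hence occupies the diagonals $0,1,\dots,r'$ with one box each. The key structural fact to establish is that the tail is nonempty if and only if the leg reaches the main diagonal, i.e. $(j,j)\in S$ and $l=0$: a nonempty tail forces $(j+1,j+1)\in S$, whence $(j,j)\in S$; conversely, if the leg stops strictly above the diagonal, the rows below are too short to reach column $j+1$. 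Feeding these two configurations into the count identity yields the two alternatives: an empty tail gives $\boldsymbol{sd}(S)\xrightarrow{l,r}\boldsymbol{sd}(S')$, while a nonempty tail gives the double step $\boldsymbol{sd}(S)\xrightarrow{0,r}\xrightarrow{0,r'}\boldsymbol{sd}(S')$, and a short estimate using $i\le j$ together with the strict decrease of the $\lambda_i$ gives $r'<r$.

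For (2)$\Rightarrow$(1) I would reconstruct the removed box from the diagonal data, reversing the computation above, and then appeal to the bijectivity of $\boldsymbol{sd}\colon\mathcal{F}(S_n)\to\mathbb{SD}_n$. Since both $\boldsymbol{sd}(S)$ and $\boldsymbol{sd}(S')$ lie in $\mathbb{SD}_n$, the condition $0\le b_k-b_{k+1}\le 1$ forces the endpoints of the prescribed step to be corners. In the single step $\boldsymbol{sd}(S)\xrightarrow{l,r}\boldsymbol{sd}(S')$ it forces $d_r(S)=d_{r+1}(S)+1$ and, when $l>0$, $d_{l-1}(S)=d_l(S)$, which say precisely that the rightmost box of some row lies on diagonal $r$ and the bottom box of some column lies on diagonal $l$. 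I would then set $(i,j)=(d_r(S),\,d_l(S)+l)$, check that it is a genuine corner of $S$ with empty tail (exactly as in the proof of Lemma \ref{lemma:hook_and_DRep}), and verify via the count identity that $S\setminus h_S(i,j)$ has diagonal expression $\boldsymbol{sd}(S')$; bijectivity gives $S'=S\setminus h_S(i,j)$. The double step is handled the same way, now using the extra corner condition $d_{r'}(S)=d_{r'+1}(S)+1$ to locate the tail row $j+1=d_{r'}(S)$, so that $(i,j)=(d_r(S),\,d_{r'}(S)-1)$.

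The main obstacle I anticipate is the tail case in the reverse direction: one must check that the box located from the diagonal data genuinely has a nonempty tail of the correct length, so that its hook removal yields a double decrease rather than a single one, and that the point where the leg meets the tail on the main diagonal is counted exactly twice. The boundary subcases $l=0$ (the leg bottoms out on the main diagonal) and the degenerate $r'=0$ (a one-box tail), where an endpoint inequality becomes an equality, are where the bookkeeping is most delicate and must be written out separately; everything else is a routine, if slightly lengthy, diagonal count.
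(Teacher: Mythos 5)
Your overall strategy is the same as the paper's: reduce everything to the diagonal-by-diagonal count $d_k(S)-d_k(S') = \#\{(i',j')\in h_S(i,j)\mid j'-i'=k\}$, split according to whether the tail is empty, and run the reverse direction as in Lemma \ref{lemma:hook_and_DRep} (the paper itself only sketches this with examples). Your count identity, the description of the diagonals occupied by the box--arm--leg part and by the tail, the inequality $r'<r$, and the reconstruction formulas $(i,j)=(d_r(S),\,d_l(S)+l)$ resp.\ $(d_r(S),\,d_{r'}(S)-1)$ are all correct.

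However, your announced ``key structural fact'' is false as stated. You claim the tail is nonempty if and only if $(j,j)\in S$ and $l=0$; only the forward implication holds. The correct criterion is that $\mathrm{tail}_S(i,j)$ is nonempty iff row $j+1$ of $S$ is nonempty, i.e.\ $j<h(S)$, equivalently $(j+1,j+1)\in S$; having $(j,j)\in S$ is strictly weaker. The paper's own Example \ref{ex:shifted_hooks}(b) is a counterexample: for $v=(2,5)$ in $S=(7,6,4,3,2)$ the leg descends to $(5,5)$ on the main diagonal, so $(j,j)\in S$ and $l=0$, yet the tail (row $6$) is empty and the hook produces a single step $\xrightarrow{0,r}$, not a double one. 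This matters in your reverse direction: for a single step $\boldsymbol{sd}(S)\xrightarrow{0,r}\boldsymbol{sd}(S')$ your reconstructed box is $(d_r(S),d_0(S))$ with $d_0(S)=h(S)$, a box that does satisfy $(j,j)\in S$ and $l=0$; your stated equivalence would then force its tail to be nonempty, hence force a double decrease on the diagonals $0,\dots,r'$, contradicting the hypothesis that the step is single. The repair is local --- replace the criterion by ``$j<h(S)$'' throughout, keeping only the half you actually use in $(1)\Rightarrow(2)$, namely that a nonempty tail forces $(j,j)\in S$ and $l=0$ --- but as written the case split is wrong and the $l=0$ single-step case of $(2)\Rightarrow(1)$ cannot be closed.
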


Let us explain the key point of a proof of the lemma by using some examples. 
Let $S\in \mathcal{F}(S_n)$, and write $\boldsymbol{sd}(S)$ as $\boldsymbol{sd}(S)=[d_{0},\ldots,d_{n}]$ for $S \in \mathcal{F}(S_n)$.
Let us consider $(1)\Longrightarrow (2)$. If $h(S)\leq j$, then the removed hook $h_s(i,j)$ is of the form either (b) or (c) in Example \ref{ex:shifted_hooks}.
Thus, there exist $0\le l \le r < n$ such that $\boldsymbol{sd}(S)\xrightarrow{l,r}\boldsymbol{sd}(S')$.
For example, let $S$ be as in Example \ref{ex:shifted_hooks}, and let $S' = S\setminus h_{S}(2,6)$.
Note that the right-half of $S$ is an (ordinary) Young diagram. 
Removing the hook $h_s(i,j)$ of this form from $S$ naturally corresponds to removing a hook from the Young diagram (see \cite[Chapter 4]{Olsson}).
\begin{center}
$S=$\ \begin{ytableau}
    \ &\ &\ &\ &\ &\ &\ \\
    \none&\ &\ &\ &\ &*(gray) &*(gray) \\
    \none&\none&\ &\ &\ &*(gray) \\
    \none&\none&\none&\ &\ &*(gray) \\
    \none&\none&\none&\none&\ &*(gray) \\
\end{ytableau}
\qquad$\to$\qquad
$S'=\ $\begin{ytableau}
    \ &\ &\ &\ &\ &\ &\ \\
    \none&\ &\ &\ &\  \\
    \none&\none&\ &\ &\  \\
    \none&\none&\none&\ &\  \\
    \none&\none&\none&\none&\  \\
\end{ytableau}
\end{center}
In the diagonal expression, we see that 
\[
\boldsymbol{sd}(S) = [5,5,4,3,2,2,1,0],\qquad \boldsymbol{sd}(S') = [5,4,3,2,1,1,1,0],
\]
and hence $\boldsymbol{sd}(S)\xrightarrow{1,5}\boldsymbol{sd}(S')$.

If $j<h(S)$, then the removed hook is of the form (a) in Example \ref{ex:shifted_hooks}.
In this case, we deduce that $\boldsymbol{sd}(S)\xrightarrow{0,r}\xrightarrow{0,r'}\boldsymbol{sd}(S')$ for some $0 \le r' < r < n$.
For example, let $S$ be as in Example \ref{ex:shifted_hooks}, and let $S' = S\setminus h_{S}(2,3)$.


\begin{center}
$S=$\ \begin{ytableau}
    \ &\ &\ &\ &\ &\ &\ \\
    \none&\ &*(gray)\ &*(gray)\ &*(gray)\ &*(gray)\ &*(gray)\ \\
    \none&\none&*(gray)\ &\ &\ &\ \\
    \none&\none&\none&*(gray)\ &*(gray)\ &*(gray)\ \\
    \none&\none&\none&\none&\ &\ \\
\end{ytableau}
\qquad$\to$\qquad
$S'=\ $\begin{ytableau}
    \ &\ &\ &\ &\ &\ &\ \\
    \none&\ &\ &\ &\ \\
    \none&\none&\ &\ \\
    \none\\
    \none\\
\end{ytableau}\\
\end{center}
In the diagonal expression, we see that 
\[
\boldsymbol{sd}(S) = [5,5,4,3,2,2,1,0],\qquad \boldsymbol{sd}(S') = [3,3,2,2,1,1,1,0],
\]
and hence $\boldsymbol{sd}(S)\xrightarrow{0,5}\xrightarrow{0,2}\boldsymbol{sd}(S')$.

The implication $(2)\Longrightarrow (1)$ can be shown in exactly same as Lemma \ref{lemma:hook_and_DRep}.

\begin{definition}\label{def:symmetric}
A sequence $(a_{-m},\ldots,a_n)\in \mathbb{D}_{m,n}$ is said to be symmetric if $a_i=a_{n-m-i}$ for all $-m\leq i\leq n$.
\end{definition}

\begin{lemma}\label{lem:nn_symmetric}
\mbox{}
\begin{enumerate}\label{sy}
    \item[(1)] Let $Y\in\mathcal{F}(Y_{n,n})$. The sequence $\boldsymbol{d}(Y)\in \mathbb{D}_{n,n}$ is symmetric if and only if $Y\in\mathcal{T}(Y_{n,n})$.
    \item[(2)] Let $Y\in\mathcal{F}(Y_{n,n+1})$. The sequence $\boldsymbol{d}(Y)\in \mathbb{D}_{n,n+1}$ is symmetric if and only if $Y\in\mathcal{T}(Y_{n,n+1})$.
\end{enumerate}
\end{lemma}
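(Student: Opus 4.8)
The plan is to prove part~(1) directly and then deduce part~(2) from it via the isomorphism $E$ of Theorem~\ref{th:game_cor}. Throughout part~(1) we have $m=n$, so the reflection $k\mapsto n-m-k$ is just $k\mapsto -k$ and ``symmetric'' means $d_k(Y)=d_{-k}(Y)$; the full square has diagonal expression $(0,1,\dots,n-1,\dot n,n-1,\dots,1,0)$, which is symmetric. The engine of both directions is a local reading of the bulge conditions of Definition~\ref{def:bulge}: for $\boldsymbol d\in\mathbb D_{n,n}$ one checks that at a gap $k\le 0$ a right bulge $d_{k-1}\nearrow d_k$ means $d_k=d_{k-1}+1$ while a left bulge $d_{k-1}\searrow d_k$ means $d_k=d_{k-1}$ (and symmetrically for $k>0$), so by Lemma~\ref{lemma:bulge} each gap is of exactly one type. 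Combined with symmetry this yields the duality I will call Claim~A: \emph{if $\boldsymbol d(Y)$ is symmetric, then $(d_{k-1},d_k)$ is a right bulge iff $(d_{-k},d_{-k+1})$ is a left bulge}. I also record that, since the unimodal numbering satisfies $\mu(k):=\min\{k+m,-k+n\}=\mu(n-m-k)$, a hook with interval $[l,r]$ and its reflected interval $[n-m-r,n-m-l]$ carry the \emph{same} multiset $\mathcal A$ (Lemma~\ref{lemma:hook_size} together with the substitution $k\mapsto n-m-k$).

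For the forward implication ($Y\in\mathcal T(Y_{n,n})\Rightarrow$ symmetric) I induct on the length of a transition from $Y_{n,n}$; the base case is the symmetric full square, and it suffices to show one MHRG move preserves symmetry. Write the first removed hook as $\boldsymbol d(Y)\xrightarrow{l,r}\boldsymbol d(Y_1)$, so that $d_{l-1}(Y)\nearrow d_l(Y)$ and $d_r(Y)\searrow d_{r+1}(Y)$ by Lemma~\ref{lemma:bulge_hook}. If the hook is centered ($l+r=0$), subtracting $1$ on the symmetric interval $[l,-l]$ keeps the sequence symmetric, and since removal flips the bulges exactly at gaps $l$ and $r+1$, the only candidate matching interval $[n-m-r,n-m-l]=[l,-l]$ is no longer removable from $Y_1$; thus (M4b) does not fire and $Y'=Y_1$ is symmetric. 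If the hook is off-center ($l+r\ne0$), Claim~A converts the two bulge conditions of $Y$ into a right bulge at gap $-r$ and a left bulge at gap $-l+1$; because $\{-r,-l+1\}\cap\{l,r+1\}=\varnothing$ for off-center hooks, Lemma~\ref{lemma:bulge_hook} shows these bulges survive in $Y_1$, so the reflected interval is removable from $Y_1$ and, having the same multiset $\mathcal A$, is precisely the matching hook of (M4b). The forced double removal subtracts $1$ on $[l,r]\cup[-r,-l]$, a reflection-invariant decrement, so $Y'$ is again symmetric.

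For the backward implication I argue by induction on $\#(Y_{n,n})-\#(Y)$, claiming that every symmetric $Y\ne Y_{n,n}$ admits a symmetric \emph{parent} $Y^+$ with $Y^+\to Y$ a single centered-hook move and $\#(Y^+)>\#(Y)$; the full square is the root. If a symmetric $Y$ had a right bulge at every gap $k\le 0$, then $d_k=n-|k|$ and $Y=Y_{n,n}$; hence for $Y\ne Y_{n,n}$ there is a left bulge at some gap $l\le 0$, and by Claim~A a right bulge at gap $-l+1$. Adding $1$ on the centered interval $[l,-l]$ produces a symmetric $\boldsymbol d(Y^+)\in\mathbb D_{n,n}$ (the new end-gaps $l$ and $-l+1$ satisfy the adjacency condition precisely because of the left/right bulges just found), and removing that centered hook recovers $Y$; by the centered case of the forward argument this is a legitimate single MHRG move. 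Since $Y^+$ is symmetric with strictly more boxes, the induction hypothesis gives $Y^+\in\mathcal T(Y_{n,n})$, whence $Y\in\mathcal T(Y_{n,n})$.

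Finally, part~(2) follows from part~(1) via $E$: since $m+n=2n$ is even and $c=(n-m)/2=0$, Theorem~\ref{th:game_cor} gives $\mathcal T(Y_{n,n+1})=E(\mathcal T(Y_{n,n}))$, and $E$, which duplicates the $0$-th component, restricts to a bijection between the symmetric sequences of $\mathbb D_{n,n}$ (those with $d_k=d_{-k}$) and those of $\mathbb D_{n,n+1}$ (those with $d_k=d_{1-k}$), because symmetry about $1/2$ is exactly the condition that the two central components coincide. With part~(1) this yields $Y\in\mathcal T(Y_{n,n+1})$ iff $Y=E(Y_0)$ with $\boldsymbol d(Y_0)$ symmetric iff $\boldsymbol d(Y)$ is symmetric. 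I expect the main obstacle to be the off-center case of the forward direction: one must be certain that after the first removal $\boldsymbol d(Y)\xrightarrow{l,r}\boldsymbol d(Y_1)$ the reflected interval is \emph{still} removable, so that (M4b) genuinely fires and symmetry is restored rather than broken by an illegal single move; this rests on the disjointness $\{-r,-l+1\}\cap\{l,r+1\}=\varnothing$ for off-center hooks, where the hypothesis $m=n$ (which makes the borderline index $2r=n-m-1$ non-integral) is used in an essential way.
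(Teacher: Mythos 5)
Your proof is correct and follows essentially the same route as the paper's: the forward direction is the identical bulge analysis (an off-centre removal $[l,r]$ forces the reflected hook $[-r,-l]$ as the unique (M4b) continuation because $\{-r,-l+1\}\cap\{l,r+1\}=\varnothing$, while a centred removal does not trigger (M4b)), and part (2) is reduced to part (1) via $E$ exactly as in the paper. The only organisational difference is in the backward direction, where the paper writes down the entire transition from $Y_{n,n}$ at once from the set of indices $i\ge 0$ where the right half of $\boldsymbol{d}(Y)$ is flat, whereas you find a symmetric parent one centred hook at a time by upward induction on the number of missing boxes; the two are interchangeable, and your inductive packaging makes the legality of each intermediate move somewhat more explicit than the paper's one-line assertion of the transition.
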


\begin{proof}
By Theorem \ref{th:game_cor}, we need only to show part $(1)$, because it is clear that for $Y\in\mathcal{T}(Y_{n,n})$, $\boldsymbol{d}(Y)$ is symmetric if and only if $\boldsymbol{d}(E(Y))$ is symmetric.
We show by induction on $\#Y$ that if $Y\in \mathcal{T}(Y_{n,n})$, then $\boldsymbol{d}(Y)=(d_{-n}(Y),\ldots, d_{n}(Y))\in \mathbb{D}_{n,n}$ is symmetric.
If $Y=Y_{n,n}$, then $\boldsymbol{d}(Y_{n,n})=(0,1,\ldots,n-1,n,n-1,\ldots,1,0)$ is symmetric.
Assume that $Y\neq Y_{n,n}$.
There exists $\hat{Y}\in \mathcal{T}(Y_{n,n})$ such that $\hat{Y}\to Y$;
note that $d(\hat{Y})=(d_{-n}(\hat{Y}),\ldots, d_{n}(\hat{Y}))$ is symmetric by the induction hypothesis, and $d_{k-1}(\hat{Y})\nearrow d_{k}(\hat{Y})$ if and only if $d_{-k}(\hat{Y})\searrow d_{-k+1}(\hat{Y})$ for $-n < k\le n$.
Then,
\begin{itemize}
    \item[(i)] there exist $-n<l\leq r<n$ such that $\boldsymbol{d}(\hat{Y})\xrightarrow{l,r}\boldsymbol{d}(Y)$, or
    \item[(ii)] there exist $-n<l\leq r<n$ such that $\boldsymbol{d}(\hat{Y})\xrightarrow{l,r}\boldsymbol{d}(\hat{Y}')\xrightarrow{l'=-r,r'=-l}\boldsymbol{d}(Y)$.
\end{itemize}

Let us consider case (i). 
Suppose that $l \not= -r$.
Note that $d_{l-1}(\hat{Y})\nearrow d_{l}(\hat{Y})$, $d_{-l}(\hat{Y})\searrow d_{-l+1}(\hat{Y})$, $d_{r}(\hat{Y})\searrow d_{r+1}(\hat{Y})$, and $d_{-r-1}(\hat{Y})\nearrow d_{-r}(\hat{Y})$. 
By Lemma \ref{lemma:bulge_hook}, we have $d_{-r-1}(Y)\nearrow d_{-r}(Y)$ and $d_{-l}(Y)\searrow d_{-l+1}(Y)$.
Thus $\boldsymbol{d}(Y)_{[-r,-l]}\in \mathbb{D}_{n,n}$ by Lemma \ref{lemma:same_number_hook}, which is a contradiction. 
Hence we deduce that $l=-r$.
Then we have $\boldsymbol{d}(\hat{Y})\xrightarrow{-r,r}\boldsymbol{d}(Y)$; in this case, it is obvious that $\boldsymbol{d}(Y)\in \mathbb{D}_{n,n}$ is symmetric.

Let us consider case (ii).
We will show that $d_{k}(Y) = d_{-k}(Y)$ for any $-n < k < n$.
Assume that $l \le k \le r$ and $-r \le k \le -l$.
In this case, we have $d_{k}(\hat{Y}) = d_{k}(\hat{Y}')+1 = d_{k}(Y)+2$.
Because $l \le -k \le r$ and $-r \le -k \le -l$, we have $d_{-k}(\hat{Y}) = d_{-k}(\hat{Y}')+1 = d_{-k}(Y)+2$ .
Thus we have $d_{k}(Y) = d_{k}(\hat{Y})-2 = d_{-k}(\hat{Y})-2 = d_{-k}(Y)$.
The proofs for the other cases are similar.
Hence $\boldsymbol{d}(Y)\in \mathbb{D}_{n,n}$ is symmetric.

Next, we show that if $\boldsymbol{d}(Y)=(d_{-n}(Y),\ldots, d_{n}(Y))\in \mathbb{D}_{n,n}$ is symmetric, then $Y\in\mathcal{T}(Y_{n,n})$.
Let $\mathbb{A} := \{0\leq i\leq n-1\mid d_i=d_{i+1}+1\}$, and write it as: $\mathbb{A}=\{i_1,i_2,\ldots,i_k\}$.
Then there exists a transition $Y_{n,n}=Y_0\xrightarrow{}Y_1\xrightarrow{}Y_2\xrightarrow{}\cdots\xrightarrow{}Y_{k-1}\xrightarrow{} Y_k=Y$ such that $\boldsymbol{d}(Y_{l-1})\xrightarrow{-i_l,i_l}\boldsymbol{d}(Y_l)$ for $1\leq l\leq k$.
Thus we obtain $Y\in\mathcal{T}(Y_{n,n})$, as desired.
\end{proof}

Let $\boldsymbol{a}=(a_{-n},a_{n-1},\ldots,a_{-1},\dot{a_0},a_1,\ldots,a_{n},a_{n+1})\in \mathbb{D}_{n,n+1}$.
Assume that 
\[
\boldsymbol{\hat{a}}:=[a_1,a_2,\ldots,a_{n},a_{n+1}]\in\mathbb{N}_0^{n+1}.
\]
By the definition of $\mathbb{D}_{n,n+1}$, we have $\boldsymbol{\hat{a}}\in\mathbb{SD}_{n}$.
\begin{definition}\label{def:sh-di}
The map $A:\mathcal{T}(Y_{n,n+1}) \to \mathcal{F}(S_n)$ is defined as follows. 
If the diagonal expression of $Y \in \mathcal{T}(Y_{n,n+1})$ is
\[
\boldsymbol{d}(Y)=(a_{-n},a_{n-1},\ldots,a_{-1},\dot{a_0},a_1,\ldots,a_{n},a_{n+1}),
\]
then we define $A(Y)\in \mathcal{F}(S_n)$ to be the shifted Young diagram in $\mathcal{F}(S_n)$ whose diagonal expression is equal to
\[
\boldsymbol{sd}(A(Y)) = [a_1,a_2,\ldots,a_{n},a_{n+1}].
\]
\end{definition}

\begin{lemma}\label{lemma:nn+1_sn}
Let $Y \in \mathcal{T}(Y_{n,n+1})$, and let $Y'\in \mathcal{O}(Y)$.
Also, set $S := A(Y) \in \mathcal{F}(S_n)$.
Then there exists $S'\in \mathcal{O}(S)$ such that $A(Y') = S'$.
\end{lemma}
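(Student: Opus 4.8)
The plan is to work entirely in the diagonal expression and to exploit the symmetry supplied by Lemma \ref{lem:nn_symmetric}. Write $\boldsymbol{d}(Y)=(a_{-n},\ldots,a_{-1},\dot{a_0},a_1,\ldots,a_n,a_{n+1})$ and $\boldsymbol{d}(Y')=(a'_{-n},\ldots,a'_{n+1})$. Since $Y\in\mathcal{T}(Y_{n,n+1})$ and $Y'\in\mathcal{O}(Y)$, we have $Y'\in\mathcal{T}(Y_{n,n+1})$, so by Lemma \ref{lem:nn_symmetric} (2) both sequences are symmetric; for $\mathbb{D}_{n,n+1}$ this means $a_i=a_{1-i}$ and $a'_i=a'_{1-i}$ for all $i$, i.e.\ the reflection is $i\mapsto 1-i$. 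By Definition \ref{def:sh-di} the map $A$ reads off only the positive tail $[a_1,\ldots,a_{n+1}]=\boldsymbol{sd}(A(Y))$ (and similarly for $Y'$), so it suffices to identify the change $a_k\mapsto a'_k$ for $k\ge 1$ with a legal hook removal in $\mathcal{F}(S_n)$ and then invoke the implication $(2)\Rightarrow(1)$ of Lemma \ref{lemma:hook_and_shifted}.

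By the description of moves recorded after Remark \ref{re:t_not=_f} (with $m=n$ and the second index equal to $n+1$), the transition $Y\to Y'$ has one of two forms. First, a single hook: $\boldsymbol{d}(Y)\xrightarrow{l,r}\boldsymbol{d}(Y')$ for some $-n<l\le r<n+1$. Here $\boldsymbol{d}(Y)$ and $\boldsymbol{d}(Y')$ differ precisely by subtracting $1$ on the index set $[l,r]$; since both are symmetric under $i\mapsto 1-i$, this index set must be invariant under the reflection, forcing $1-l=r$, hence $l\le 0$ and $[l,r]=[l,1-l]$. Restricting to the indices $k\ge 1$ leaves exactly $a_1,\ldots,a_{1-l}$ decreased by $1$, i.e.\ $\boldsymbol{sd}(A(Y))\xrightarrow{0,-l}\boldsymbol{sd}(A(Y'))$ with $0\le -l<n$; Lemma \ref{lemma:hook_and_shifted} then yields $A(Y')\in\mathcal{O}(A(Y))$.

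Second, a double hook: there are $-n<l\le r<n+1$ with $\boldsymbol{d}(Y)\xrightarrow{l,r}\boldsymbol{d}(Y'')\xrightarrow{1-r,1-l}\boldsymbol{d}(Y')$, where by Lemma \ref{lemma:same_number_hook} the two intervals $[l,r]$ and $[1-r,1-l]$ are reflections of one another and (since $1-r\ne l$) satisfy $l+r\ne 1$. The net decrement $\delta_k$ of the $k$-th component is the number of these two intervals containing $k$. I would split into the cases $r\le 0$, $l\ge 1$, and ``$l\le 0$ and $r\ge 1$'', according to how the two intervals meet the positive indices. In the first two cases only one interval survives on $\{k\ge 1\}$, giving a single shifted-hook decrement $\boldsymbol{sd}(A(Y))\xrightarrow{-r,-l}\boldsymbol{sd}(A(Y'))$ respectively $\boldsymbol{sd}(A(Y))\xrightarrow{l-1,r-1}\boldsymbol{sd}(A(Y'))$, each with admissible endpoints. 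In the remaining case the intervals overlap, and writing out $\delta_k$ for $k\ge 1$ shows that $\boldsymbol{sd}(A(Y))$ is decreased by $2$ on an initial block and by $1$ on the next block, which is exactly a double shifted-hook move $\boldsymbol{sd}(A(Y))\xrightarrow{0,r-1}\xrightarrow{0,-l}\boldsymbol{sd}(A(Y'))$ (or the analogous one with the roles of $-l$ and $r-1$ exchanged). The inequality $r'<r$ required for such a move reduces to $l+r\ge 2$ (resp.\ $l+r\le 0$), which is precisely the subcase hypothesis. In every case Lemma \ref{lemma:hook_and_shifted} gives $A(Y')\in\mathcal{O}(A(Y))$, and setting $S'=A(Y')$ proves the lemma.

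The main obstacle is the bookkeeping in the double-hook case: one must compute precisely which positive indices are hit once and which twice, confirm that the resulting pattern is an initial ``$-2$'' block followed by a ``$-1$'' block (so that it corresponds to a hook of type (a) in Example \ref{ex:shifted_hooks} rather than an illegal pattern), and check the boundary conditions $0\le r'<r<n$ on the shifted side. Everything else is a direct translation through the definition of $A$ together with an application of Lemma \ref{lemma:hook_and_shifted}.
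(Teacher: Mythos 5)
Your proof is correct and takes essentially the same route as the paper: pass to diagonal expressions, use the symmetry $a_i=a_{1-i}$ from Lemma \ref{lem:nn_symmetric}, split into the single-hook and double-hook cases and then according to the position of $[l,r]$ relative to the reflection point, and match the resulting decrement pattern on the positive indices with a shifted-hook move via Lemma \ref{lemma:hook_and_shifted}. The only differences are cosmetic: you derive $l=1-r$ in the single-hook case directly from the symmetry of both sequences (the paper cites the proof of Lemma \ref{lem:nn_symmetric}), and your subcase division $r\le 0$, $l\ge 1$, ``$l\le 0$ and $r\ge 1$'' is a disjoint reorganization of the paper's overlapping subcases $0\le l\le r$, $l\le r\le 0$, $l\le 0<r$, with matching formulas in each case.
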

\begin{proof}
Since $Y'\in \mathcal{O}(Y)$, we see that
\begin{itemize}
    \item[(i)] there exist $-n<l\leq r<n+1$ such that $\boldsymbol{d}(Y)\xrightarrow{l,r}\boldsymbol{d}(Y')$, or
    \item[(ii)] there exist $-n<l\leq r<n+1$ and $Y''\in \mathcal{F}(Y_{n,n+1})$ such that $\boldsymbol{d}(Y)\xrightarrow{l,r}\boldsymbol{d}(Y'')\xrightarrow{-r+1,-l+1}\boldsymbol{d}(Y')$.
\end{itemize}

First, we consider case (i). 
By the proof of Lemma \ref{lem:nn_symmetric}, we see that $l=-r+1$, and hence $\boldsymbol{d}(Y)\xrightarrow{-r+1,r}\boldsymbol{d}(Y')$.
In this case, we have $d_{r-1}(S) = d_{r}(S)+1$.
Let $S'\in \mathcal{F}(S_n)$ be such that $\boldsymbol{sd}(S)\xrightarrow{0,r-1}\boldsymbol{sd}(S')$.
Then we deduce that $A(Y')=S'$.

Next, we consider case (ii). 
By the proof of Lemma \ref{lem:nn_symmetric}, we see that $l\not=-r+1$, and hence $\boldsymbol{d}(Y)_{[l,r]}, (\boldsymbol{d}(Y)_{[l,r]})_{[-r+1,-l+1]} \in \mathbb{D}_{n,n+1}$.

Assume that $0\leq l\leq r$.
In this case, we have $d_{l-2}(S) = d_{l-1}(S)$ and $d_{r-1}(S) = d_{r}(S)+1$.
Let $S'\in \mathcal{F}(S_n)$ be such that $\boldsymbol{sd}(S)\xrightarrow{l-1,r-1}\boldsymbol{sd}(S')$.
Then we deduce that $A(Y')=S'$.

Assume that $l\leq r\leq 0$.
In this case, we have $d_{-r-1}(S) = d_{-r}(S)$ and $d_{-l}(S) = d_{-l+1}(S)+1$.
Let $S'\in \mathcal{F}(S_n)$ be such that $\boldsymbol{sd}(S)\xrightarrow{-r,-l}\boldsymbol{sd}(S')$.
Then we deduce that $A(Y')=S'$.

Assume that $l\leq 0 < r$.
In this case, we have $d_{r-1}(S) = d_{r}(S)+1$ and $d_{-l}(S) = d_{-l+1}(S)+1$.
Let $S'\in \mathcal{F}(S_n)$ be such that $\boldsymbol{sd}(S)\xrightarrow{0,-l}\xrightarrow{0,r-1}\boldsymbol{sd}(S')$.
Then we deduce that $A(Y')=S'$.

Thus we have proved the lemma.
\end{proof}

Let $\boldsymbol{b}=[b_0,b_1,\ldots,b_{n-1},b_n]\in \mathbb{SD}_{n}$.
Assume that 
\[
\boldsymbol{\hat{b}}:=(b_{-n},b_{n-1},\ldots,b_{-1},\dot{b_0},b_0,b_1,\ldots,b_{n-1},b_n)\in\mathbb{N}_0^{2n+2}.
\]
By the definition of $\mathbb{SD}_{n}$, we have $\boldsymbol{\hat{b}}\in\mathbb{D}_{n,n+1}$.

\begin{definition}\label{def:B}
The map $B:\mathcal{F}(S_n) \to \mathcal{T}(Y_{n,n+1})$ is defined as follows. 
If the diagonal expression of $Y \in \mathcal{F}(S_n)$ is
\[
\boldsymbol{sd}(S)=[a_0,a_1,\ldots,a_{n-1},a_n].
\]
then we define $B(S) \in \mathcal{T}(Y_{n,n+1})$ to be the rectangular Young diagram in $\mathcal{T}(Y_{n,n+1})$ whose diagonal expression is equal to
\[
\boldsymbol{d}(B(S)) =(a_{n},a_{n-1},\ldots,\dot{a_0},
\underbrace{a_0}_{1\text{st}},a_1,\ldots,a_{n-1},
\underbrace{a_n}_{(n+1)\text{-th}}).
\]
\end{definition}

\begin{lemma}\label{lemma:sn_nn+1}
Let $S\in \mathcal{F}(S_n)$, and let $S'\in \mathcal{O}(S)$.
Also, set $Y := B(S) \in \mathcal{T}(Y_{n,n+1})$.
Then there exists $Y'\in \mathcal{O}(Y)$ such that $B(S') = Y'$.
\end{lemma}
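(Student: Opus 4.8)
The plan is to mirror, in reverse, the case analysis in the proof of Lemma \ref{lemma:nn+1_sn}, translating each hook-removal in HRG$(S_n)$ into a single or double hook-removal in MHRG$(n,n+1)$ through the map $B$. Write $\boldsymbol{sd}(S)=[a_0,a_1,\ldots,a_n]$, so that by Definition \ref{def:B} the diagonal expression of $Y=B(S)$ satisfies $d_{-k}(Y)=a_k$ for $0\le k\le n$ and $d_k(Y)=a_{k-1}$ for $1\le k\le n+1$; in particular $\boldsymbol{d}(Y)$ is symmetric in the sense $d_j(Y)=d_{1-j}(Y)$, so $Y\in\mathcal{T}(Y_{n,n+1})$ by Lemma \ref{lem:nn_symmetric}(2). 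By Lemma \ref{lemma:hook_and_shifted}, the option $S'\in\mathcal{O}(S)$ is of one of two types: either $\boldsymbol{sd}(S)\xrightarrow{l,r}\boldsymbol{sd}(S')$ with $0\le l\le r<n$, or $\boldsymbol{sd}(S)\xrightarrow{0,r}\xrightarrow{0,r'}\boldsymbol{sd}(S')$ with $0\le r'<r<n$.

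In the first type, computing $\boldsymbol{d}(B(S'))$ directly from the definition of $B$ shows that $\boldsymbol{d}(B(S))$ and $\boldsymbol{d}(B(S'))$ differ by $-1$ exactly on $[-r,-l]\cup[l+1,r+1]$. If $l\ge 1$ these are two disjoint intervals interchanged by the reflection $j\mapsto 1-j$, and the claim is that $\boldsymbol{d}(B(S))\xrightarrow{l+1,r+1}\boldsymbol{d}(Y'')\xrightarrow{-r,-l}\boldsymbol{d}(B(S'))$ is a legal double hook-removal; if $l=0$ the two intervals merge into the single self-reflective interval $[-r,r+1]$, giving a single hook-removal $\boldsymbol{d}(B(S))\xrightarrow{-r,r+1}\boldsymbol{d}(B(S'))$. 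In the second type, the same computation shows the difference is $-1$ on $[-r,r'+1]$ together with $-1$ on $[-r',r+1]$ (these overlap on $[-r',r'+1]$, where the drop is $-2$), which is exactly the double hook-removal $\boldsymbol{d}(B(S))\xrightarrow{-r',r+1}\boldsymbol{d}(Y'')\xrightarrow{-r,r'+1}\boldsymbol{d}(B(S'))$. In every case I set $Y':=B(S')$, and the goal is to confirm $Y'\in\mathcal{O}(Y)$.

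To justify legality I would verify, via Lemma \ref{lemma:bulge_hook} together with the bulge bookkeeping of Lemma \ref{lemma:bulge}, that the intermediate sequence $\boldsymbol{d}(Y'')$ and the final sequence $\boldsymbol{d}(B(S'))$ lie in $\mathbb{D}_{n,n+1}$; the adjacency and bulge data for $\boldsymbol{sd}(S)\to\boldsymbol{sd}(S')$ supplied by Lemma \ref{lemma:hook_and_shifted} translate under $B$ into precisely the bulge conditions required at the relevant endpoints ($l+1,r+1,-r,-l$, respectively $-r',r+1,-r,r'+1$). For the double-removal cases I would then invoke Lemma \ref{lemma:same_number_hook}: since the unimodal numbering of $Y_{n,n+1}$ is invariant under the diagonal reflection $k\mapsto 1-k$, because $\min\{k+n,-k+n+1\}=\min\{(1-k)+n,-(1-k)+n+1\}$, the first hook and its reflected partner carry the same multiset of numbers, so rule (M4b) forces the second removal and produces exactly $B(S')$.

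The main obstacle is the single-hook case $l=0$: there one must check that rule (M4a) rather than (M4b) applies, i.e. that after removing the self-reflective hook on $[-r,r+1]$ no further matching hook exists. This follows because, by Lemma \ref{lemma:bulge_hook}, the removal creates a left bulge at the left end and a right bulge at the right end of the affected block, and these, by the adjacency constraints underlying Lemma \ref{lemma:bulge}(1), forbid a second removal on the same reflected interval, which is the only interval a matching hook could occupy according to Lemma \ref{lemma:same_number_hook}. Combined with the routine $\mathbb{D}_{n,n+1}$-membership and multiset verifications above, this yields $Y'=B(S')\in\mathcal{O}(Y)$, completing the proof.
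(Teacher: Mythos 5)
Your proposal is correct and follows essentially the same route as the paper's proof: the same three-way case split (single shifted-hook removal with $l=0$ versus $l\ge 1$, and the two-hook shifted removal), the same translation of the affected diagonal intervals under $B$ into the intervals $[-r,r+1]$, $[l+1,r+1]\cup[-r,-l]$, and $[-r',r+1]\cup[-r,r'+1]$, and the same appeal to Lemma \ref{lemma:bulge_hook} for legality. Your added remarks on why (M4b) forces the reflected second removal and why no second removal occurs in the $l=0$ case make explicit what the paper leaves implicit, but do not change the argument.
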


\begin{proof}
Since $S'\in \mathcal{O}(S)$, we see that
\begin{itemize}
    \item[(i)] there exist $0\leq l\leq r<n$ such that $\boldsymbol{sd}(S)\xrightarrow{l,r}\boldsymbol{sd}(S')$, or
    \item[(ii)] there exist $0\leq r'< r<n$ such that $\boldsymbol{sd}(S)\xrightarrow{0,r}\xrightarrow{0,r'}\boldsymbol{sd}(S')$.
\end{itemize}

First, we consider case (i). 
Assume that $l=0$.
In this case, $d_{r}(S) = d_{r+1}(S)+1$.
Then, we have $d_{r+1}(B(S)) = d_{r+2}(B(S))+1$, $d_{-r-1}(B(S))+1 = d_{-r}(B(S))$, and hence $\boldsymbol{d}(B(S))_{[-r,r+1]} \in \mathbb{D}_{n,n+1}$ by Lemma \ref{lemma:bulge_hook}.
Let $Y'\in \mathcal{O}(Y)$ be such that $\boldsymbol{d}(Y)\xrightarrow{-r,r+1}\boldsymbol{d}(Y')$. Then we deduce that $B(S')=Y'$.
Assume that $0<l\leq r$. 
In this case, $d_{l-1}(S) = d_{l}(S)$ and $d_{r}(S) = d_{r+1}(S)+1$.
Then, we have $d_{l}(B(S)) = d_{l+1}(B(S))$, $d_{-l}(B(S)) = d_{-l+1}(B(S))$, $d_{r+1}(B(S)) = d_{r+2}(B(S))+1$, $d_{-r-1}(B(S))+1 = d_{-r}(B(S))$, and hence $\boldsymbol{d}(B(S))_{[l+1,r+1]}$, $(\boldsymbol{d}(B(S))_{[l+1,r+1]})_{[-r,-l]} \in \mathbb{D}_{n,n+1}$ by Lemma \ref{lemma:bulge_hook}.
Let $Y'\in \mathcal{O}(Y)$ be such that $\boldsymbol{d}(Y)\xrightarrow{l+1,r+1}\boldsymbol{d}(Y'')\xrightarrow{-r,-l}\boldsymbol{d}(Y')$. Then we deduce that $B(S')=Y'$.

Next, we consider case (ii). 
In this case, $d_{r}(S) = d_{r+1}(S)+1$ and $d_{r'}(S) = d_{r'+1}(S)+1$.
Then, we have $d_{r+1}(B(S)) = d_{r+2}(B(S))+1$, $d_{-r-1}(B(S))+1 = d_{-r}(B(S))$, $d_{r'+1}(B(S)) = d_{r'+2}(B(S))+1$, $d_{-r'-1}(B(S))+1 = d_{-r'}(B(S))$, and hence $\boldsymbol{d}(B(S))_{[-r',r+1]}, (\boldsymbol{d}(B(S))_{[-r',r+1]})_{[-r,r'+1]} \in \mathbb{D}_{n,n+1}$ by Lemma \ref{lemma:bulge_hook}.
Let $Y'\in \mathcal{O}(Y)$ be such that $\boldsymbol{d}(Y)\xrightarrow{-r',r+1}\boldsymbol{d}(Y'')\xrightarrow{-r,r'+1}\boldsymbol{d}(Y')$. Then we deduce that $B(S')=Y'$.

Thus we have proved the lemma.
\end{proof}

The next theorem follows from Lemmas \ref{lemma:nn+1_sn} and \ref{lemma:sn_nn+1}.

\begin{theorem}\label{th:nn+1_shifted}
For $n\in\mathbb{N}$, MHRG$(n,n+1)$ and HRG$(S_n)$ are isomorphic.
In particular, $\mathcal{G}(Y_{n,n+1})$ in MHRG$(n,n+1)$ is equal to $\mathcal{G}(S_n)$ in HRG$(S_n)$.
\end{theorem}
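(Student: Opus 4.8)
The plan is to exhibit the map $A$ of Definition \ref{def:sh-di} as a game isomorphism from MHRG$(n,n+1)$ to HRG$(S_n)$, and then to read off the $\mathcal{G}$-value statement from Proposition \ref{prop:game_iso}. Recall that $\mathcal{C}(\mathrm{MHRG}(n,n+1)) = \mathcal{T}(Y_{n,n+1})$ while $\mathcal{C}(\mathrm{HRG}(S_n)) = \mathcal{F}(S_n)$. First I would check that $A:\mathcal{T}(Y_{n,n+1}) \to \mathcal{F}(S_n)$ and $B:\mathcal{F}(S_n) \to \mathcal{T}(Y_{n,n+1})$ (Definition \ref{def:B}) are mutually inverse bijections. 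By Lemma \ref{lem:nn_symmetric}(2), an element $Y \in \mathcal{F}(Y_{n,n+1})$ lies in $\mathcal{T}(Y_{n,n+1})$ exactly when its diagonal expression is symmetric, that is $a_i = a_{1-i}$ for all $i$; such a sequence is completely determined by its entries $[a_1,\ldots,a_{n+1}]$ in positions $1,\ldots,n+1$, and this is precisely the data $\boldsymbol{sd}(A(Y))$. Comparing the two definitions directly on diagonal expressions, one verifies that $A \circ B = \mathrm{id}_{\mathcal{F}(S_n)}$ and $B \circ A = \mathrm{id}_{\mathcal{T}(Y_{n,n+1})}$ (the entries $a_0=a_1$ forced by symmetry are exactly those reconstructed by $B$), so both maps are bijections.

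Next I would upgrade the two transport lemmas into the option-set identity demanded by Definition \ref{def:game_iso}. Lemma \ref{lemma:nn+1_sn} immediately gives the inclusion $A(\mathcal{O}(Y)) \subseteq \mathcal{O}(A(Y))$ for every $Y \in \mathcal{T}(Y_{n,n+1})$, and Lemma \ref{lemma:sn_nn+1} gives $B(\mathcal{O}(S)) \subseteq \mathcal{O}(B(S))$ for every $S \in \mathcal{F}(S_n)$. To obtain the reverse inclusion, fix $Y$, set $S = A(Y)$, and take any $S' \in \mathcal{O}(S)$. By Lemma \ref{lemma:sn_nn+1} we get $B(S') \in \mathcal{O}(B(S)) = \mathcal{O}(B(A(Y))) = \mathcal{O}(Y)$, using $B \circ A = \mathrm{id}$; applying $A$ and $A \circ B = \mathrm{id}$ yields $S' = A(B(S')) \in A(\mathcal{O}(Y))$. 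Hence $\mathcal{O}(A(Y)) \subseteq A(\mathcal{O}(Y))$, and combining with the first inclusion gives $\mathcal{O}(A(Y)) = A(\mathcal{O}(Y))$. Thus $A$ is an isomorphism of games, so Proposition \ref{prop:game_iso} yields $\mathcal{G}(Y) = \mathcal{G}(A(Y))$ for all $Y \in \mathcal{T}(Y_{n,n+1})$.

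Finally, for the statement on the starting position I would compute $A(Y_{n,n+1})$ explicitly. The rectangle $Y_{n,n+1}$ has diagonal expression $\boldsymbol{d}(Y_{n,n+1}) = (0,1,\ldots,n-1,\dot{n},n,n-1,\ldots,1,0)$, so reading off positions $1,\ldots,n+1$ gives $\boldsymbol{sd}(A(Y_{n,n+1})) = [n,n-1,\ldots,1,0] = \boldsymbol{sd}(S_n)$, whence $A(Y_{n,n+1}) = S_n$. Therefore $\mathcal{G}(Y_{n,n+1}) = \mathcal{G}(S_n)$, as asserted.

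The genuinely hard work lies entirely inside Lemmas \ref{lemma:nn+1_sn} and \ref{lemma:sn_nn+1}, namely the compatibility of hook removal with the passage between the two diagonal expressions, including the matching of double-hook removals in MHRG with the tail-hooks of the shifted diagram. Granting those, the only delicate point remaining is that the two one-sided inclusions, individually insufficient to conclude, do combine into the full option-set equality precisely because $A$ and $B$ are inverse bijections; everything else is bookkeeping on diagonal expressions.
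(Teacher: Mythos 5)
Your proposal is correct and follows essentially the same route as the paper: the paper's entire proof of this theorem is the single sentence that it ``follows from Lemmas \ref{lemma:nn+1_sn} and \ref{lemma:sn_nn+1},'' and you have supplied exactly the intended bookkeeping — that $A$ and $B$ are mutually inverse bijections (via the symmetry characterization of $\mathcal{T}(Y_{n,n+1})$ in Lemma \ref{lem:nn_symmetric}), that the two one-sided inclusions from the lemmas combine into the option-set equality of Definition \ref{def:game_iso}, and that $A(Y_{n,n+1})=S_n$.
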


Combining Proposition \ref{pr:shifted}, Theorems \ref{th:game_cor}, and \ref{th:nn+1_shifted}, we obtain the following corollary.

\begin{corollary}\label{cor:nn_nn+1}
In MHRG$(n,n)$ (resp., MHRG$(n,n+1)$), the $\mathcal{G}$-value of the starting position $Y_{n,n}$ (resp., $Y_{n,n+1}$) is equal to 
\[
    \mathcal{G}(Y_{n,n})=\mathcal{G}(Y_{n,n+1})=\bigoplus_{1\leq k\leq n}^{} k.
\]
\end{corollary}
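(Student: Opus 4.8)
The plan is to assemble the corollary from three ingredients already established in the excerpt, so that no new game-theoretic analysis is required. First I would handle the rectangular case $Y_{n,n+1}$: by Theorem \ref{th:nn+1_shifted}, MHRG$(n,n+1)$ is isomorphic to HRG$(S_n)$, and hence Proposition \ref{prop:game_iso} yields $\mathcal{G}(Y_{n,n+1}) = \mathcal{G}(S_n)$, where the $\mathcal{G}$-value on the right is computed in HRG$(S_n)$ itself (i.e.\ taking $T = S_n$). This is the step that converts the problem from the MHRG setting into the shifted-diagram setting where a closed formula is available.

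Next I would evaluate $\mathcal{G}(S_n)$ explicitly. Applying Proposition \ref{pr:shifted} with $S = T = S_n = (n, n-1, \ldots, 2, 1)$ gives $\mathcal{G}(S_n) = \bigoplus_{1 \le k \le n} k$, which is exactly the claimed nim-sum. Combining this with the previous step already proves the assertion for MHRG$(n,n+1)$, namely $\mathcal{G}(Y_{n,n+1}) = \bigoplus_{1\le k\le n} k$.

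Finally I would reduce the square case to the one just handled. The hypotheses of Theorem \ref{th:game_cor} are met with $m = n$, since $n \le n$ and $m + n = 2n$ is even; that theorem (through Proposition \ref{prop:game_iso}) therefore gives $\mathcal{G}(Y_{n,n}) = \mathcal{G}(E(Y_{n,n}))$, and one checks from Definition \ref{def:index_cor} that $E(Y_{n,n}) = Y_{n,n+1}$, so $\mathcal{G}(Y_{n,n}) = \mathcal{G}(Y_{n,n+1})$. Chaining the three equalities yields $\mathcal{G}(Y_{n,n}) = \mathcal{G}(Y_{n,n+1}) = \mathcal{G}(S_n) = \bigoplus_{1\le k\le n} k$, as desired.

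Since all the substantive content---the two game isomorphisms and the Turning Turtles / Kawanaka formula for shifted diagrams---is supplied by the cited results, there is essentially no obstacle left to overcome at this stage; the only points requiring care are purely bookkeeping. I would verify that the parity and size hypothesis of Theorem \ref{th:game_cor} genuinely holds for the pair $(m,n) = (n,n)$, and confirm that the injection $E$ sends the starting position $Y_{n,n}$ to the starting position $Y_{n,n+1}$, so that the $\mathcal{G}$-value transfer applies precisely to the positions of interest. Everything else is a direct composition of isomorphisms, whose compatibility with $\mathcal{G}$-values is guaranteed by Proposition \ref{prop:game_iso}.
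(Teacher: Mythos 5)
Your proposal is correct and follows exactly the paper's own route: the paper derives this corollary by combining Proposition \ref{pr:shifted}, Theorem \ref{th:game_cor}, and Theorem \ref{th:nn+1_shifted}, which is precisely the three-step composition you describe. Your write-up simply makes explicit the bookkeeping (verifying the parity hypothesis for $(m,n)=(n,n)$ and that $E(Y_{n,n})=Y_{n,n+1}$) that the paper leaves implicit.
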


\begin{example}\label{ex:nn+1}
Assume that $n=3$. The $\mathcal{G}$-value of $Y_{3,4}=\Yvcentermath1\young(3321,2332,1233)$ is equal to $1\oplus2\oplus3=0$.
\end{example}

\if0
\subsection{Properties of the $\mathcal{G}$-values of This Game}

\begin{conjecture}
When the starting position is $n\times (n+1)$, we can expect the following about the $\mathcal{G}$-values of Young diagrams and dual Young diagrams. For $a,b,c,d\in \mathbb{N}$, we have

\begin{itemize}
    \item[(1)] If $(2+4a)\times(3+4a)$, then
    \begin{eqnarray*}
    \mathcal{G}(Y)\oplus\mathcal{G}(Y^D)\neq 3+4a
    \Longrightarrow Y\not\in S(I).
    \end{eqnarray*}
    \item[(2)] If $(3+4b)\times(4+4b)$, then
    \begin{eqnarray*}
    \mathcal{G}(Y)\oplus\mathcal{G}(Y^D)\neq 0
    \Longrightarrow Y\not\in S(I).
    \end{eqnarray*}
    \item[(3)] If $(4+4c)\times(5+4c)$, then
    \begin{eqnarray*}
    \mathcal{G}(Y)\oplus\mathcal{G}(Y^D)\neq 4+4c
    \Longrightarrow Y\not\in S(I).
    \end{eqnarray*}
    \item[(4)] If $(5+4d)\times(6+4d)$, then
    \begin{eqnarray*}
    \mathcal{G}(Y)\oplus\mathcal{G}(Y^D)\neq 1
    \Longrightarrow Y\not\in S(I).
    \end{eqnarray*}
\end{itemize}
\end{conjecture}
\fi


\appendix
\def\thesection{\Alph{section}}
\section*{Appendix}
\section{Proof of Proposition \ref{prop:DRep}}

\begin{lemma}\label{lemma:diagonal2}
Let $Y \in \mathcal{F}(Y_{m,n})$;
recall that $d_k = d_k(Y) =\#\{(i,j)\in Y\mid j-i=k\}$ for $k \in \mathbb{Z}$.
\begin{enumerate}
    \item[(1)] If $k>0$, then $0\le d_{k-1}-d_k\le1$.
    \item[(2)] If $k\le0$, then $0\le d_k-d_{k-1}\le1$.
\end{enumerate}
\end{lemma}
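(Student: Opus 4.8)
The plan is to establish part (1) directly from Lemma \ref{lemma:diagonal}(1) together with the weak decrease $\lambda_1\ge\cdots\ge\lambda_m$ of the row lengths, and then to deduce part (2) from part (1) by passing to the transpose (conjugate) Young diagram. Throughout I would record, via Remark \ref{remark:d_box}, the basic indexing fact that for $k\ge 0$ the boxes of $Y$ on the diagonal $j-i=k$ are exactly $(1,1+k),(2,2+k),\ldots,(d_k,d_k+k)$; equivalently, a box $(i,i+k)$ lies in $Y$ if and only if $1\le i\le d_k$.

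For the lower bound $d_{k-1}-d_k\ge 0$ when $k>0$, I would take each box $(i,i+k)$ on the diagonal $k$ with $1\le i\le d_k$; since it lies in $Y$ we have $\lambda_i\ge i+k$, and as $k\ge 1$ this gives $1\le i\le i+k-1\le\lambda_i$, so $(i,i+k-1)\in Y$. These are $d_k$ distinct boxes on the diagonal $k-1$, whence $d_{k-1}\ge d_k$. For the upper bound $d_{k-1}-d_k\le 1$, I would argue by contradiction: if $d_{k-1}\ge d_k+2$, then the box on the diagonal $k-1$ indexed by $i=d_k+2$, namely $(d_k+2,\,d_k+k+1)$, lies in $Y$, so $\lambda_{d_k+2}\ge d_k+k+1$; the monotonicity $\lambda_{d_k+1}\ge\lambda_{d_k+2}$ then forces $(d_k+1,\,d_k+k+1)\in Y$, contradicting the assertion $(d_k+1,d_k+k+1)\notin Y$ of Lemma \ref{lemma:diagonal}(1). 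This settles part (1); note that the argument is uniform in $d_k$ and in particular covers the degenerate case $d_k=0$.

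For part (2), the cleanest route is a symmetry reduction. Let $Y^{\top}\in\mathcal{F}(Y_{n,m})$ be the transpose of $Y$, obtained by reflecting across the main diagonal so that $(i,j)\mapsto(j,i)$; this sends the diagonal $j-i=k$ of $Y$ to the diagonal $j-i=-k$ of $Y^{\top}$, hence $d_k(Y)=d_{-k}(Y^{\top})$ for all $k\in\mathbb{Z}$. Given $k\le 0$, put $k'=-k+1$, so that $k'>0$. Applying part (1) to $Y^{\top}$ at $k'$ yields $0\le d_{k'-1}(Y^{\top})-d_{k'}(Y^{\top})\le 1$; rewriting $d_{k'-1}(Y^{\top})=d_{-k}(Y^{\top})=d_k(Y)$ and $d_{k'}(Y^{\top})=d_{-k+1}(Y^{\top})=d_{k-1}(Y)$ gives exactly $0\le d_k-d_{k-1}\le 1$, as desired.

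I expect the only genuine (and still mild) obstacle to be the bookkeeping in the upper-bound argument: correctly locating the single box lying just beyond a diagonal run and matching it with the non-containment statements of Lemma \ref{lemma:diagonal} via row-monotonicity; everything else is routine. Alternatively, part (2) can be proved directly by mirroring the argument for part (1)—reading diagonals by columns, using that column lengths weakly decrease, and invoking Lemma \ref{lemma:diagonal}(2) in place of Lemma \ref{lemma:diagonal}(1)—but the transpose reduction has the advantage of avoiding a duplicate case analysis.
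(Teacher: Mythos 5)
Your argument is correct and follows essentially the same route as the paper: part (1) is obtained by combining Remark \ref{remark:d_box} (the boxes on a nonnegative diagonal are an initial run $(1,1+k),\dots,(d_k,d_k+k)$) with the non-containment $(d_k+1,d_k+k+1)\notin Y$ from Lemma \ref{lemma:diagonal}(1), exactly the two ingredients the paper uses, with your uniform treatment of $d_k=0$ replacing the paper's separate case. For part (2) the paper simply mirrors the argument using Lemma \ref{lemma:diagonal}(2), whereas you package the same symmetry as a transpose reduction; both are fine.
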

\begin{proof}
(1) Assume that $d_k = 0$.
Then, $(1,k+1)\notin Y$ by Lemma \ref{lemma:diagonal}, which implies that $(2,k+1)\notin Y$.
Hence, $d_{k-1} = \max\{\min\{i,j\} \mid (i,j)\in Y, j-i=k-1\}$ is equal to $0$ or $1$ by Remark \ref{remark:d_box}.
Thus we obtain $0 \le d_{k-1} - d_k = d_{k-1} \le 1$.
Assume $d_k > 0$. 
By Lemma \ref{lemma:diagonal}, it follows that $(d_k,d_k+k)\in Y $ and $(d_k+1,d_k+k+1)\notin Y$.
Then we have $(d_k,d_k+k-1)\in Y $ and $(d_k+2,d_k+k+1)\notin Y$.
Therefore $d_{k-1} = \max\{\min\{i,j\} \mid (i,j)\in Y, j-i=k-1\}$ is $d_k$ or $d_k+1$  by Remark \ref{remark:d_box}.
Thus we obtain $0 \le d_{k-1} - d_k \le 1$.

(2) The proof of (2) is similar to that of (1).\qedhere
\end{proof}

\begin{proof}[Proof of Proposition \ref{prop:DRep}]
From Lemma \ref{lemma:diagonal2}, the pair $(d_{i-1}(Y), d_{i}(Y))$ satisfies the adjacency condition for all $-m < i \le n$.
Since $d_{-m}(Y) = d_{n}(Y) = 0$, we have $(d_{-m}(Y),\ldots,d_{n}(Y)) \in \mathbb{D}_{m,n}$.

By the definition of $\boldsymbol{d}=\boldsymbol{d}_{m,n}$, it is obvious that $\boldsymbol{d}$ is an injection.

For $\boldsymbol{a} = (a_{-m},\ldots,a_n) \in \mathbb{D}_{m,n}$, we define $Y$ as follows.
The box $(i,j)$ is contained in $Y$ if and only if $\min\{i,j\} \le a_{j-i}$.
Note that if $j-i \le -m$ or $ n \le j-i$, then the box $(i,j)$ is not contained in $Y$.
We claim that $Y$ is a Young diagram.
It suffices to show that if the box $(i,j)$ is not contained in $Y$, then neither the box $(i+1,j)$ nor $(i,j+1)$ is contained in $Y$.
If $-m < j-i < 0$, then $\min\{i,j\} = j > a_{j-i}$.
By the definition of $\mathbb{D}_{m,n}$, we have $0 \le a_{j-i}-a_{j-i-1}$ and $a_{j-i+1}-a_{j-i} \le 1$.
Then we get $\min\{i+1,j\} = j > a_{j-i-1}$ and $\min\{i,j+1\} = j+1 > a_{j-i+1}$. 
Hence, by the definition of $Y$, we obtain $(i+1,j),(i,j+1) \notin Y$.
The proofs for the cases that $j-i = 0$ and $0 < j-i < n$ are similar.
Thus we have shown that $Y$ is a Young diagram.
Further, since $(m+1,1),(1,n+1) \notin Y$, it follows that $Y \in \mathcal{F}(Y_{m,n})$.

By the definition of $Y$, we have $d_k = a_k$ for $-m < k < n$.
Hence we obtain $\boldsymbol{d}(Y) = \boldsymbol{a}$, which shows that $\boldsymbol{d}$ is a surjection.
Thus we have proved that $\boldsymbol{d}$ is a bijection. \qedhere 
\end{proof}



\end{document}